\def\R{\mathbb{R}}
\def\P{\mathbb{P}}
\def\E{\mathbb{E}}
\def\Z{\mathbb{Z}}
\def\T{\mathbb{T}}
\newcommand{\1}{\mathbf{1}}
\DeclareMathOperator{\var}{Var}
\newtheorem{theorem}{Theorem}[section]
\newtheorem{proposition}[theorem]{Proposition}
\newtheorem{lemma}[theorem]{Lemma}
\newtheorem{corollary}[theorem]{Corollary}
\newtheorem{claim}[theorem]{Claim}
\def\eps{\varepsilon}
\def\eqd{\,{\buildrel d \over =}\,}
\DeclareMathOperator\supp{supp}
\renewcommand{\Pr}{{\mathbb P}}
\newcommand{\Ze}{\Z_{\mathrm{even}}}
\newcommand{\intB}{\partial_{\bullet}}
\newcommand{\extB}{\partial_{\circ}}
\newcommand{\intextB}{\partial_{\ins \out}}
\newcommand{\ins}{\bullet}
\newcommand{\out}{\circ}
\DeclareMathOperator\rev{rev}
\newcommand{\phasedom}{\mathcal{P}}
\newcommand{\overlap}{\mathsf{overlap}}
\newcommand{\bad}{\mathsf{none}}
\newcommand{\unbal}{\mathsf{unbal}}
\newcommand{\rest}{\mathsf{rest}}
\newcommand{\unique}{\mathsf{uniq}}
\newcommand{\nondom}{\mathsf{nondom}}
\newcommand{\cA}{\mathcal{A}}
\newcommand{\cI}{\mathcal{I}}
\newcommand{\Ent}{\mathsf{Ent}}
\newcommand{\Even}{\mathrm{Even}}
\newcommand{\Odd}{\mathrm{Odd}}
\newcommand{\dpartial}{\vec{\partial}}
\title{Three lectures on random proper colorings of $\Z^d$}
\author{Ron Peled\thanks{School of Mathematical Sciences, Tel Aviv University, Tel Aviv, Israel. Research supported by Israeli Science Foundation grants 861/15 and 1971/19 and by the European Research Council starting grant 678520 (LocalOrder). Email: {\tt peledron@tauex.tau.ac.il}.} \and Yinon Spinka\thanks{Department of Mathematics, University of British Columbia, Vancouver, Canada. Research supported in part by NSERC of Canada. Email: {\tt yinon@math.ubc.ca}.}}
\begin{document}
\maketitle

\begin{abstract}
  A proper $q$-coloring of a graph is an assignment of one of $q$ colors to each vertex of the graph so that adjacent vertices are colored differently. Sample uniformly among all proper $q$-colorings of a large discrete cube in the integer lattice $\Z^d$. Does the random coloring obtained exhibit any large-scale structure? Does it have fast decay of correlations? We discuss these questions and the way their answers depend on the dimension $d$ and the number of colors $q$.  The questions are motivated by statistical physics (anti-ferromagnetic materials, square ice), combinatorics (proper colorings, independent sets) and the study of random Lipschitz functions on a lattice. The discussion introduces a diverse set of tools, useful for this purpose and for other problems, including spatial mixing, entropy and coupling methods, Gibbs measures and their classification and refined contour analysis.
\end{abstract}

\begin{center}
	\vspace{10pt}
	\includegraphics[scale=0.43]{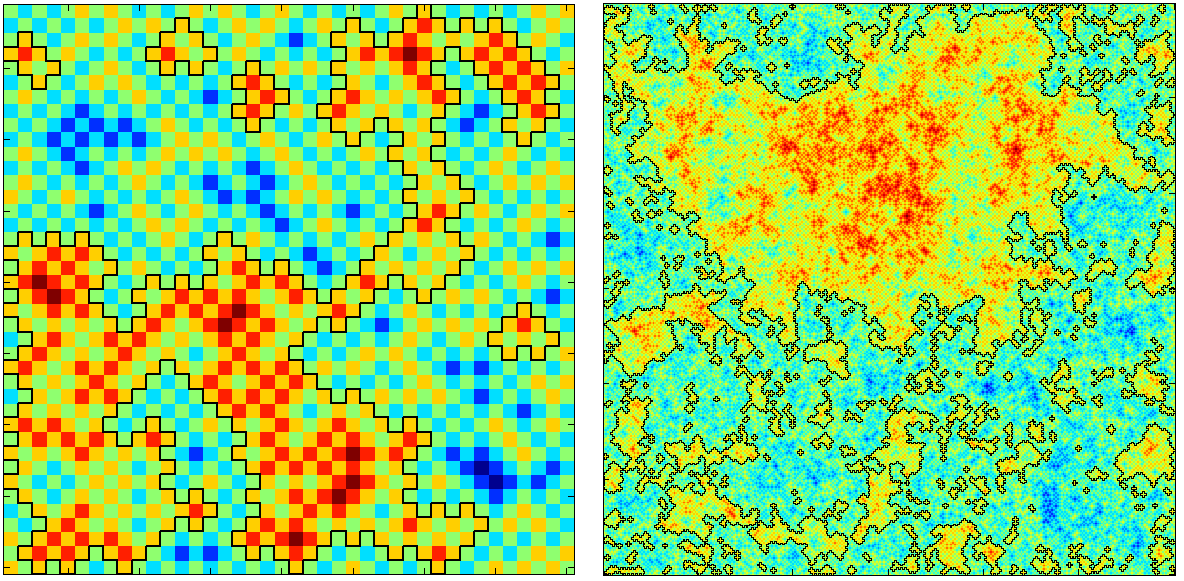}
\end{center}

\newpage

\tableofcontents

\bigskip

These notes were originally prepared for a 3-lecture mini-course on random proper colorings of $\Z^d$ and have been expanded to include additional material. The lectures were given by the first author at the workshop on `Random Walks, Random Graphs and Random Media', September 9-13, 2019, Munich and in the school `Lectures on Probability and Stochastic Processes XIV', December 2-6, 2019, ISI Delhi. The authors are grateful to the organizers of these meetings:  Noam Berger, Alexander Drewitz, Nina Gantert, Markus Heydenreich and Alejandro Ramirez; Arijit Chakrabarty, Rajat Subhra Hazra, Manjunath Krishnapur and Parthanil Roy, for their kind invitation to present this material there.

\bigskip
\textbf{\emph{The notes are in a preliminary state! Comments are welcome.}}

\section{Lecture 1 -- Introduction and Disordered Regime}
A proper $q$-coloring of a graph $G=(V,E)$ is an assignment of the colors $\{1,\ldots,q\}$ to $V$ so that adjacent vertices are colored differently.

\begin{figure}
 \centering
 \includegraphics[scale=1.4]{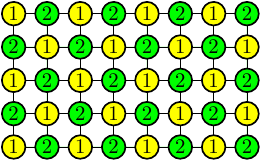}\qquad\qquad
 \includegraphics[scale=1.4]{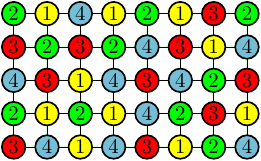}
 \caption{A proper 2-coloring and proper 4-coloring of a rectangular region in $\Z^2$.}
\end{figure}

We wish to study the \emph{uniform distribution} on all proper $q$-colorings of a finite $G$. Our focus in this course is on the case that $G$ is a subset of the $d$-dimensional integer lattice $\Z^d$.
Among the basic difficulties are the facts that:
\begin{itemize}
  \item There is no closed-form formula for the number of proper $q$-colorings. Indeed, on a general graph, it may even be that no coloring exists. In the main case discussed here, when $G$ is a subset of $\Z^d$, colorings exist (for $q\ge 2$) as $\Z^d$ is bipartite, but the counting problem remains difficult.

      One checks simply that the number of proper $q$-colorings, $q\ge 3$, of a bipartite graph increases exponentially with its size. An exponentially large family of colorings is obtained, for instance, by assigning colors in $\{1,2\}$ to one of the two bipartition classes and the color $3$ to the other class. However, finding the precise rate of exponential growth on natural families of graphs (e.g., sub-cubes of $\Z^d$, $d\ge 2$) again seems difficult.

  \item Related to the above is the fact that there is no simple way to sample uniformly from all proper $q$-colorings. One way which one may try is to sample vertex-by-vertex: Each time drawing the color of a vertex uniformly from the colors allowed to it given the previous choices. This method, however, leads in general to a biased sample (though it is exact on trees). More sophisticated sampling algorithms, based on Markov Chain Monte Carlo methods, exist, but provide only an approximate sample and may have long running time.
\end{itemize}

Stated roughly, our focus is on the following questions:
\begin{itemize}
  \item How does a typical proper $q$-coloring \emph{look like}?
  \item Does it exhibit any \emph{structure}?
  \item How strong are the \emph{correlations} of the colors within it?
\end{itemize}

\subsection{Motivation}
The study of the uniform distribution on proper $q$-colorings may be motivated from several points of view:
\begin{enumerate}
  \item Statistical physics: In the Potts model of statistical physics, vertices of $\Z^d$ may be thought of as the atoms/molecules of a crystal, each of which comes equipped with a magnetic spin which takes one of $q$ values.

      In a \emph{ferromagnetic} material, the spins at adjacent vertices have a tendency to be equal. In an \emph{anti-ferromagnetic} material, they have a tendency to differ. The strength of these tendencies is governed by the temperature, with the tendencies made absolute in the \emph{zero-temperature limit}.

      With this terminology, the uniform distribution on proper $q$-colorings is equivalent to the zero-temperature anti-ferromagnetic $q$-state Potts model. Ordering phenomena in anti-ferromagnetic materials are sometimes termed N\'{e}el order.

      While our focus in these notes is on proper $q$-colorings, we wish to emphasize that these also serve as a \emph{paradigm for other statistical physics models, with or without hard constraints}. In particular, some of the methods discussed below apply in some generality.

  \item Combinatorics: Understanding whether a uniformly sampled proper $q$-coloring typically exhibits any large-scale structures, or long-range correlations, is closely related to the counting and sampling problems mentioned above: How many proper $q$-colorings does a grid graph have? How can one sample from these uniformly?

  \item Random Lipschitz functions: Proper $3$-colorings of $\Z^d$ admit an interpretation as \emph{discrete Lipschitz functions}, themselves objects of active research (see Lecture 2).
\end{enumerate}

\subsection{Concrete questions and simple cases}\label{sec:concrete questions}
Uniformly sample a proper $q$-coloring of the cube $\Lambda_L:=\{1,\ldots,L\}^d\subset\Z^d$.
How strong is the influence of one region of the coloring on a distant region? Does it decay to zero with the distance? As a concrete instance of this question, we may ask: as $L\to\infty$, understand
\begin{equation}\label{eq:probability of equal corners}
  \P((1,\ldots,1)\text{ and }(L,\ldots, L)\text{ are equally colored}) - \frac{1}{q}.
\end{equation}
The answer is sought as a function of the number of colors $q$ and the dimension $d$.
The vertices $(1,\ldots,1)$ and $(L,\ldots,L)$ are chosen as they are as distant from each other as possible on $\Lambda_L$. The reason for the subtraction of the factor $\frac{1}{q}$ is that the color assigned to each of the vertices is uniformly picked from $\{1,\ldots, q\}$. Were the two colors independent, they would be equal with probability $\frac{1}{q}$. Thus~\eqref{eq:probability of equal corners} may be regarded as a measure of the correlation between the two colors.

As an alternative measure of correlations in the coloring, one may uniformly sample the coloring subject to prescribed values on the boundary of $\Lambda_L$ and study the effect this has on the coloring in the interior.

Three plausible behaviors for \eqref{eq:probability of equal corners} are highlighted:
\begin{itemize}
  \item \emph{Disorder}: The quantity~\eqref{eq:probability of equal corners} decays to zero exponentially in $L$.
  \item \emph{Criticalilty}: The quantity~\eqref{eq:probability of equal corners} decays to zero as a power-law in $L$.
  \item \emph{Long-range order}: The quantity~\eqref{eq:probability of equal corners} does not decay to zero.
\end{itemize}
There are two cases in which it is easy to decide the behavior:
\begin{itemize}
  \item Long-range order occurs in all dimensions when using only two colors ($q=2$). This occurs as the coloring necessarily has a chessboard pattern (the coloring has exactly two possibilities), which is fully determined by the color of a single vertex.

   \item Disorder occurs in one dimension ($d=1$) for any number $q\ge3$ of colors. Indeed, in this case the coloring is a Markov chain which converges exponentially fast to stationarity (alternatively, explicit calculations are possible).
\end{itemize}
In the rest of the course we discuss other values of $q$ and $d$, attempting to classify their behavior according to the above types: In the next section we discuss the disordered regime, the second lecture considers a case with critical behavior while the third lecture is devoted to long-range order.

\subsection{The disordered regime -- Dobrushin's uniqueness condition}\label{sec:Dobrushin uniqueness}
When $q\gg d$ the colors of the neighbors of a vertex do not limit much the color of the vertex itself. Intuitively, this should imply disorder. Such ideas go back to Dobrushin~\cite{Dobrushin1968TheDe} who found a general condition for the uniqueness of the Gibbs measure. We proceed to describe a general result, of wide applicability, which gives a sufficient condition for a model to be disordered (in the technical sense of satisfying strong spatial mixing, as defined below).

Let $G=(V,E)$ be a finite (simple) connected graph. Let $S$ be a finite `spin space' (the restriction that $S$ be finite is technically convenient but may be relaxed). Let $\mu$ be a probability measure on the \emph{configuration space}, the space of functions $f:V\to S$.
A partial configuration $\tau:B\to S$, defined on a subset $B\subset V$, is called \emph{feasible} if when $f$ is sampled from $\mu$ then $\P(f|_B = \tau)>0$. For a feasible $\tau$, let $\mu^\tau$ be the measure $\mu$ conditioned on the configuration equalling $\tau$ on $B$. Given also $U\subset V$, let $\mu^\tau_U$ be the restriction of this measure to $U$. When $U$ is a singleton $\{u\}$, we write $\mu^\tau_u$ as shorthand for $\mu^\tau_U$.

The notion of spatial mixing quantifies the idea that if $U$ is far from $B$, then the distribution $\mu^\tau_U$ is hardly influenced by the choice of $\tau:B\to S$.
For a precise definition, recall that the total variation distance $d_{\text{TV}}(\nu_1, \nu_2)$ of two probability distributions $\nu_1, \nu_2$ is the maximal value of $|\nu_1(A) - \nu_2(A)|$ over all events $A$. Alternatively, it equals the minimal probability $\P(X\neq Y)$ over all random variables $(X,Y)$ with $X$ distributed $\nu_1$ and $Y$ distributed $\nu_2$ (i.e., over all \emph{couplings} of $\nu_1$ and $\nu_2$. Any coupling which achieves the minimal probability is termed an \emph{optimal coupling}). We say that $\mu$ satisfies \emph{weak spatial mixing} with constants $C,c>0$ if for any $B,U\subset V$ and feasible $\tau_1, \tau_2:B\to S$ it holds that
\begin{equation}\label{eq:weak spatial mixing}
  d_{\text{TV}}(\mu^{\tau_1}_U, \mu^{\tau_2}_U)\le C|U|\exp(-c d_G(U,B)),
\end{equation}
where $d_G$ is the graph distance in $G$. This is one way of making the notion of spatial mixing precise. A more restrictive way requires $\mu^{\tau_1}_U$ and $\mu^{\tau_2}_U$ to be close even when $U$ is close to $B$, as long as it is far from the \emph{disagreement set}
\begin{equation}
  B_{\tau_1, \tau_2} := \{v\in B\colon \tau_1(v) \neq \tau_2(v)\}.
\end{equation}
We say that $\mu$ satisfies \emph{strong spatial mixing} with constants $C,c>0$ if for any $B,U\subset V$ and feasible $\tau_1, \tau_2:B\to S$ it holds that
\begin{equation}\label{eq:strong spatial mixing}
  d_{\text{TV}}(\mu^{\tau_1}_U, \mu^{\tau_2}_U)\le C|U|\exp(-c d_G(U,B_{\tau_1,\tau_2})).
\end{equation}
Evidently, strong spatial mixing implies weak spatial mixing. These definitions are most often used in the context of a sequence of measures $\mu_n$, defined on a sequence of graphs $G_n$, where one usually requires the constants $C,c$ to be uniform in $n$.

We proceed to describe \emph{Dobrushin's uniqueness condition}. We now restrict to the case that $\mu$ is \emph{fully supported}, i.e., $\mu(f)>0$ for all configurations $f:V\to S$ (so that all $\tau$ are feasible). Define the \emph{influence} $I_{u\to v}$ of $u$ on $v$ by
\begin{equation}\label{eq:influence def}
  I_{u\to v} := \max_{\substack{\tau_1,\tau_2:V\setminus\{v\}\to S\\\tau_1=\tau_2\text{ except on $u$}}} d_{\text{TV}}(\mu^{\tau_1}_v, \mu^{\tau_2}_v).
\end{equation}
We say that $\mu$ is \emph{nearest-neighbor} if $I_{u\to v} = 0$ whenever $u$ is not a neighbor of $v$.

\begin{theorem}(Dobrushin's uniqueness condition implies strong spatial mixing)\label{thm:Dobrushin uniqueness} Let $\mu$ be a fully supported and nearest-neighbor probability measure. Assume that $\mu$ satisfies the following condition (known as Dobrushin's uniqueness condition):
\begin{equation}\label{eq:Dobrushins uniqueness condition}
  \alpha := \max_{v\in V} \sum_{u\in V\setminus\{v\}} I_{u\to v}<1.
\end{equation}
Then $\mu$ satisfies strong spatial mixing with constants $C=1$ and $c = -\log\alpha$. That is, for any $B,U\subset V$ and any $\tau_1, \tau_2:B\to S$, we have
\begin{equation}\label{eq:strong spatial mixing conclusion}
  d_{\text{TV}}(\mu^{\tau_1}_U, \mu^{\tau_2}_U)\le |U|\alpha^{d_G(U,B_{\tau_1,\tau_2})}.
\end{equation}
Moreover, for any $B\subset V$ and any $\tau_1, \tau_2:B\to S$ there exist random $f^1, f^2:V\to S$ whose distributions are $\mu^{\tau_1},\mu^{\tau_2}$, respectively (i.e., $f^1, f^2$ are a coupling of $\mu^{\tau_1}, \mu^{\tau_2})$, such that
\begin{equation}\label{eq:coupling inequality}
  \P(f^1(u)\neq f^2(u))\le \alpha^{d_G(u,B_{\tau_1,\tau_2})},\quad u\in V.
\end{equation}
\end{theorem}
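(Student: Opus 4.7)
The plan is to first derive~\eqref{eq:strong spatial mixing conclusion} from the coupling statement~\eqref{eq:coupling inequality} via a union bound: for any coupling $(f^1, f^2)$ as in~\eqref{eq:coupling inequality},
\[
d_{\text{TV}}(\mu^{\tau_1}_U, \mu^{\tau_2}_U) \leq \P(f^1|_U \neq f^2|_U) \leq \sum_{u \in U} \alpha^{d_G(u, B_{\tau_1, \tau_2})} \leq |U|\, \alpha^{d_G(U, B_{\tau_1, \tau_2})}.
\]
So the entire task reduces to exhibiting a coupling satisfying the pointwise disagreement bound~\eqref{eq:coupling inequality}.

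The core analytical input is a \emph{one-step coupling inequality}: for every $v \in V$ and every pair $\sigma_1, \sigma_2 : V \setminus \{v\} \to S$,
\[
d_{\text{TV}}(\mu^{\sigma_1}_v, \mu^{\sigma_2}_v) \leq \sum_{u : \sigma_1(u) \neq \sigma_2(u)} I_{u \to v}.
\]
This follows directly from~\eqref{eq:influence def} by interpolating between $\sigma_1$ and $\sigma_2$ through a sequence of configurations differing at a single vertex at a time and applying the triangle inequality for total variation.

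Next, I would build a heat-bath Markov chain on pairs $(f^1_t, f^2_t)$ with $f^i_t|_B \equiv \tau_i$ held fixed throughout: at each step pick $v \in V \setminus B$ uniformly at random and resample $(f^1_{t+1}(v), f^2_{t+1}(v))$ from an \emph{optimal} coupling of the two single-site conditionals $\mu^{f^1_t|_{V \setminus \{v\}}}_v$ and $\mu^{f^2_t|_{V \setminus \{v\}}}_v$. Since $\mu$ is fully supported, each marginal chain is ergodic with unique invariant $\mu^{\tau_i}$, so any invariant measure $\pi$ of the joint chain (which exists by finiteness) has marginals $\mu^{\tau_1}$ and $\mu^{\tau_2}$ and is therefore the desired coupling. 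Take $(f^1, f^2) \sim \pi$ and let $p(v) := \pi(f^1(v) \neq f^2(v))$. Stationarity of $\pi$ under resampling at $v$, together with the one-step inequality and the nearest-neighbor hypothesis, yields
\[
p(v) \leq \sum_{u \sim v} I_{u \to v}\, p(u) \quad \text{for } v \in V \setminus B,
\]
while $p|_B = \mathbf{1}_{B_{\tau_1, \tau_2}}$ by construction.

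To conclude, I would apply a \emph{maximum principle}. Set $D := B_{\tau_1, \tau_2}$ and $q(v) := \alpha^{d_G(v, D)}$. Since $d_G(u, D) \geq d_G(v, D) - 1$ whenever $u \sim v$ and $\alpha < 1$, Dobrushin's condition~\eqref{eq:Dobrushins uniqueness condition} gives
\[
\sum_{u \sim v} I_{u \to v}\, q(u) \leq \alpha^{d_G(v, D) - 1} \sum_{u \sim v} I_{u \to v} \leq \alpha^{d_G(v, D)} = q(v).
\]
Then $r := p - q$ satisfies $r(v) \leq \sum_{u \sim v} I_{u \to v}\, r(u)$ on $V \setminus B$ and $r \leq 0$ on $B$; if $r$ attained a positive maximum at some $v^\star \in V \setminus B$, then $r(v^\star) \leq \alpha\, r(v^\star) < r(v^\star)$, a contradiction, so $p \leq q$, which is exactly~\eqref{eq:coupling inequality}. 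The main subtlety is coordinating the one-step coupling bound (which natively requires conditioning on \emph{all} of $V \setminus \{v\}$) with the joint chain's stationarity to extract a clean linear inequality for $p$; the fully-supported and nearest-neighbor hypotheses are used respectively to guarantee ergodicity of the marginal chains and locality of the recursion, while the strict inequality $\alpha < 1$ is precisely what turns $q$ into a supersolution and drives the exponential decay.
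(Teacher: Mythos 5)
Your argument is correct, and it shares the paper's two essential ingredients --- the interpolation bound of Lemma~\ref{lem:total variation distance with influences} and the device of resampling single sites from an optimal coupling of the two conditionals --- but it extracts the final estimate by a genuinely different route. The paper builds the coupling constructively: starting from an independent pair it runs $n$ systematic scans of $V\setminus B$ and proves, by a double induction on the round $n$ and the position $k$ within a scan, the intermediate bound $\P(f_n(v)\neq g_n(v))\le\alpha^{\min\{n,\,d_G(v,B_{\tau_1,\tau_2})\}}$, taking $n$ beyond the diameter at the end. You instead pass directly to a stationary measure $\pi$ of the random-scan coupled heat-bath chain, identify its marginals as $\mu^{\tau_1},\mu^{\tau_2}$ via ergodicity of each marginal chain (which, as you note, is where full support enters a second time, to give irreducibility), derive the self-consistency inequality $p(v)\le\sum_{u\sim v}I_{u\to v}\,p(u)$ from stationarity, and close with a discrete maximum principle against the supersolution $q(v)=\alpha^{d_G(v,B_{\tau_1,\tau_2})}$. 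What each approach buys: yours eliminates the double induction and the bookkeeping of scan order, replacing them with a one-line fixed-point inequality and a clean comparison argument, at the cost of invoking existence of an invariant measure for the joint chain and uniqueness of the invariant measure for each marginal Glauber dynamics; the paper's version is fully constructive, needs no ergodic-theoretic input, and yields quantitative information at every finite stage of the construction (useful if one cares about the dynamics itself). One small point worth making explicit when writing your argument up: in the stationarity step, decompose over which site $w$ was chosen --- if $w\neq v$ the disagreement indicator at $v$ is unchanged, so the terms with $w\neq v$ cancel from both sides of $\pi P=\pi$ and one is left exactly with $p(v)=\E_\pi\big[d_{\mathrm{TV}}\big(\mu^{f^1|_{V\setminus\{v\}}}_v,\mu^{f^2|_{V\setminus\{v\}}}_v\big)\big]$, to which Lemma~\ref{lem:total variation distance with influences} applies. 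With that spelled out, the proof is complete.
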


We remark that the theorem may be extended further to measures $\mu$ which are not nearest-neighbor.
Indeed, the proof below shows that the conclusion of the theorem remains valid for general interactions under a modified definition/assumption in~\eqref{eq:Dobrushins uniqueness condition}. Namely, for any fully supported measure $\mu$, if instead of~\eqref{eq:Dobrushins uniqueness condition} we suppose the existence of some $0<\bar\alpha<1$ such that
\begin{equation}\label{eq:Dobrushins uniqueness condition2}
\max_{v \in V} \sum_{u\in V\setminus\{v\}} \frac{I_{u \to v}}{\bar\alpha^{d_G(u,v)}} \le 1 ,
\end{equation}
then~\eqref{eq:strong spatial mixing conclusion} and~\eqref{eq:coupling inequality} continue to hold with $\alpha$ replaced by $\bar\alpha$. In particular, if $\mu$ is fully supported, has a finite interaction range $R$ in the sense that $I_{u \to v}=0$ whenever $d_G(u,v)>R$, and satisfies~\eqref{eq:Dobrushins uniqueness condition}, then one easily checks that \eqref{eq:Dobrushins uniqueness condition2} holds with $\bar\alpha=\alpha^{1/R}$ so that strong spatial mixing still holds, but now with constants $C=1$ and $c=-\frac 1R \log \alpha$.

The theorem may also be extended to measures which are not fully supported. A version of the theorem for such measures may be described with the so-called notion of a \emph{specification}. While we do not give the details here, we mention that a specification may be seen as a consistent way to define the measures $\mu^\tau$ for non-feasible $\tau$. Once $\mu^\tau$ is defined for all partial configurations $\tau$, the existing definition~\eqref{eq:influence def} of influence remains valid, and Theorem~\ref{thm:Dobrushin uniqueness} holds for $\mu$ under the same assumption~\eqref{eq:Dobrushins uniqueness condition}. In fact, it is common to define weak/strong spatial mixing for specifications, rather than for measures (for fully supported measures, the definitions coincide).
Finally, we mention that the original aim of Dobrushin~\cite{Dobrushin1968TheDe} was to prove the uniqueness of Gibbs measures for a given specification (with interactions of possibly unbounded range) under the condition~\eqref{eq:Dobrushins uniqueness condition}; see, e.g., ~\cite[Chapter 6]{friedli2017statistical}.

We start the proof of the theorem with a preliminary lemma.
\begin{lemma}\label{lem:total variation distance with influences}
  Let $\mu$ be a fully supported measure. For any $v\in V$ and any partial configurations $f,g:V\setminus\{v\}\to S$ we have \begin{equation}
    d_{\text{TV}}(\mu^f_v, \mu^g_v)\le \sum_{u\in V\setminus\{v\}} \1_{f(u)\neq g(u)} I_{u\to v}.
  \end{equation}
\end{lemma}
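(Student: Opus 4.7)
The plan is a standard telescoping / path-coupling argument that transforms $f$ into $g$ one coordinate at a time, paying $I_{u\to v}$ at each step where the two configurations disagree.

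First, I would set $D := \{u \in V\setminus\{v\} : f(u)\neq g(u)\}$ and enumerate $D = \{u_1,\ldots,u_k\}$ in any order. Then I would define a sequence of interpolating partial configurations $h_0, h_1, \ldots, h_k : V\setminus\{v\}\to S$ by setting $h_0 := f$, $h_k := g$, and in general letting $h_i$ agree with $g$ on $\{u_1,\ldots,u_i\}$ and with $f$ elsewhere. Since $\mu$ is fully supported, every $h_i$ is feasible, so each $\mu^{h_i}_v$ makes sense.

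Next, since total variation distance is a metric (in particular satisfies the triangle inequality), I would write
\begin{equation*}
  d_{\text{TV}}(\mu^f_v, \mu^g_v) \;\le\; \sum_{i=1}^{k} d_{\text{TV}}(\mu^{h_{i-1}}_v, \mu^{h_i}_v).
\end{equation*}
By construction, $h_{i-1}$ and $h_i$ differ only at $u_i$, and hence from the definition~\eqref{eq:influence def} of influence we get $d_{\text{TV}}(\mu^{h_{i-1}}_v, \mu^{h_i}_v) \le I_{u_i \to v}$. Summing over $i$ yields
\begin{equation*}
  d_{\text{TV}}(\mu^f_v, \mu^g_v)\;\le\;\sum_{i=1}^k I_{u_i\to v}\;=\;\sum_{u\in V\setminus\{v\}} \1_{f(u)\neq g(u)}\, I_{u\to v},
\end{equation*}
which is exactly the claim.

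No step here looks like a real obstacle: full support removes any feasibility worry for the interpolants, the triangle inequality is free, and the single-site bound is baked into the definition of influence. The only thing to be a bit careful about is making the ``change one coordinate at a time'' construction precise and noting that the bound is independent of the ordering chosen for the vertices of $D$.
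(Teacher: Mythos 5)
Your proof is correct and follows essentially the same telescoping path argument as the paper: interpolate between $f$ and $g$ one coordinate at a time, invoke full support for feasibility of each interpolant, and apply the triangle inequality together with the definition of $I_{u\to v}$ to each single-site change. The only cosmetic difference is that you interpolate only over the disagreement set $D$, whereas the paper sweeps through all of $V\setminus\{v\}$ and lets the indicator $\1_{f(u_j)\neq g(u_j)}$ kill the trivial steps; the two are the same argument.
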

\begin{proof}
  Let $(u_m)$, $1\le m\le |V|-1$, be an arbitrary ordering of $V\setminus \{v\}$. Define a sequence of partial configurations $(f_j)$, $0\le j\le |V|-1$, with $f_j:V\setminus\{v\}\to S$, by setting
  \begin{equation*}
    f_j(u_m) := \begin{cases} f(u_m)& m> j\\ g(u_m)& m\le j\end{cases}\quad\text{for}\quad 1\le m\le |V|-1.
  \end{equation*}
  Observe that $f_0 = f$ while $f_{|V|-1} = g$. In addition, $f_{j-1}$ and $f_j$ may differ only at the single vertex $u_j$, where they satisfy $f_{j-1}(u_j)=f(u_j)$ and $f_j(u_j) = g(u_j)$. Thus, as $\mu$ is fully supported, the triangle inequality for the total variation distance and the definition of $I_{u\to v}$ imply that
  \begin{equation*}
    d_{\text{TV}}(\mu^f_v, \mu^g_v)\le \sum_{j=1}^{|V|-1} d_{\text{TV}}(\mu^{f_{j-1}}_v, \mu^{f_j}_v)\le \sum_{j=1}^{|V|-1} \1_{f(u_j)\neq g(u_j)}I_{u_j\to v},
  \end{equation*}
  as we wanted to prove.
\end{proof}

\begin{proof}[Proof of Theorem~\ref{thm:Dobrushin uniqueness}]
  It suffices to prove the moreover part, as the $f^1$ and $f^2$ obtained there, when restricted to $U$, provide a coupling of $\mu^{\tau_1}_U$ and $\mu^{\tau_2}_U$ which proves~\eqref{eq:strong spatial mixing conclusion}.

  To construct $f^1, f^2$, we define a sequence of random pairs of functions $\{(f_n, g_n)\}_{n \ge 0}$ such that, for each $n\ge 0$,
  \begin{enumerate}
    \item $f_n\sim\mu^{\tau_1}$ and $g_n\sim\mu^{\tau_2}$.
    \item For each $v\in V$,
  \begin{equation}\label{eq:prob to differ at v}
    \P(f_n(v)\neq g_n(v))\le \alpha^{\min\{n,d_G(v,B_{\tau_1,\tau_2})\}}.
  \end{equation}
  \end{enumerate}
  Then we may take $(f^1, f^2)$ to be $(f_n, g_n)$ for any $n$ larger than the diameter of $G$.

  To start, let $f_0, g_0$ be sampled independently from $\mu^{\tau_1}$, $\mu^{\tau_2}$ respectively. The above properties clearly hold. Now assume that, for some $n\ge 1$, the pair $(f_{n-1},g_{n-1})$ has already been defined and satisfies the above properties. We define $(f_n,g_n)$ as follows: Let $(v_k)$, $1\le k\le |V\setminus B|$, be an arbitrary ordering of $V\setminus B$. Set $f_{n, 0}:=f_{n-1}$, $g_{n,0}:=g_{n-1}$. Iteratively, for $1\le k\le |V\setminus B|$, define $(f_{n,k},g_{n,k})$ by
  \begin{equation*}
    (f_{n,k}(u),g_{n,k}(u)) := (f_{n,k-1}(u), g_{n,k-1}(u)),\quad u\neq v_k,
  \end{equation*}
  and sampling $(f_{n, k}(v_k), g_{n,k}(v_k))$ by an optimal coupling of $\mu^{f_{n,k}|_{V\setminus\{v_k\}}}_{v_k}$ and $\mu^{g_{n,k}|_{V\setminus\{v_k\}}}_{v_k}$. Set $f_n := f_{n, |V\setminus B|}$ and $g_n := g_{n, |V\setminus B|}$ (in other words, generate $(f_n, g_n)$ from $(f_{n-1}, g_{n-1})$ by a systematic scan, updating the value at each $v_k$ by an optimal coupling given the previous values).

  The fact that $(f_{n-1},g_{n-1})$ satisfies the first property above implies the same for $(f_n,g_n)$. To see the second property, it suffices to show that for $1\le k\le |V\setminus B|$,
  \begin{equation}\label{eq:prob to differ at v with k}
    \P(f_{n,k}(v_k)\neq g_{n,k}(v_k))\le \alpha^{\min\{n,d_G(v_k,B_{\tau_1,\tau_2})\}}.
  \end{equation}
  We proceed to prove~\eqref{eq:prob to differ at v with k} inductively on $k$. The left-hand side of~\eqref{eq:prob to differ at v with k}, conditioned on $(f_{n,k-1}, g_{n,k-1})$, equals $d_{\text{TV}}(\mu^{f_{n,k}|_{V\setminus\{v_k\}}}_{v_k}, \mu^{g_{n,k}|_{V\setminus\{v_k\}}}_{v_k})$ (as an optimal coupling is used). Thus we may apply Lemma~\ref{lem:total variation distance with influences} to obtain
  \begin{equation}\label{eq:prob to differ at v with k_2}
    \P(f_{n,k}(v_k)\neq g_{n,k}(v_k)\,|\,f_{n,k-1}, g_{n,k-1})\le \sum_{u:u\neq v_k}\1_{\{f_{n,k-1}(u)\neq g_{n,k-1}(u)\}} I_{u\to v_k}.
  \end{equation}
  Note that
  \[ \P(f_{n,k-1}(v_\ell) \neq g_{n,k-1}(v_\ell))
  = \begin{cases}
   \P(f_{n,\ell}(v_\ell) \neq g_{n,\ell}(v_\ell)) &\text{if } \ell < k \\
   \P(f_{n-1}(v_\ell) \neq g_{n-1}(v_\ell)) &\text{if }\ell > k
   \end{cases} ,\]
   so that by the inductive statements~\eqref{eq:prob to differ at v with k} on $k$ and~\eqref{eq:prob to differ at v} on $n$,
  \[ \P(f_{n,k-1}(v) \neq g_{n,k-1}(v)) \le \alpha^{\min\{n-1,d_G(v,B_{\tau_1,\tau_2})\}}, \quad v \in V .\]
  Thus, taking expectation in~\eqref{eq:prob to differ at v with k_2}, we obtain that
  \begin{equation}
  \begin{split}
    \P(f_{n,k}(v_k)\neq g_{n,k}(v_k))&\le \sum_{u:u\neq v_k} \P(f_{n,k-1}(u)\neq g_{n,k-1}(u)) I_{u\to v_k}\\
    &\le \sum_{u:u\neq v_k} \alpha^{\min\{n-1,d_G(u,B_{\tau_1,\tau_2})\}}I_{u\to v_k}\\
    &\le \alpha^{\min\{n, d_G(v_k, B_{\tau_1,\tau_2})\}}\sum_{u:u\neq v_k} \frac{I_{u\to v_k}}{\alpha^{d_G(u,v_k)}} \le \alpha^{\min\{n, d_G(v_k, B_{\tau_1,\tau_2})\}},
  \end{split}
  \end{equation}
  where in the third inequality we used that $\min\{n-1,d_G(u,B_{\tau_1,\tau_2})\} \ge \min\{n, d_G(v_k, B_{\tau_1,\tau_2})\} - d_G(u,v_k)$, and in the last inequality we used the assumption that $\mu$ is nearest-neighbor and the definition~\eqref{eq:Dobrushins uniqueness condition} of $\alpha$. This finishes the proof of the theorem.
\end{proof}

Theorem~\ref{thm:Dobrushin uniqueness} postulates that $\mu$ is fully supported. We note first that this assumption cannot be removed without a suitable replacement (such as the notion of specification mentioned above). Indeed, consider for instance the case that $G$ is a graph with at least three vertices, $S=\{1,2\}$ and the measure $\mu$ is uniform on the two constant configurations. It is clear that $\mu$ does not satisfy strong spatial mixing. Moreover, the influences $I_{u\to v}$ are not well defined as $\mu^\tau$ is only defined for feasible $\tau$. However, if one restricts the definition~\eqref{eq:influence def} to use only feasible $\tau$, then it is straightforward that all resulting influences $I_{u\to v}$ are zero, so that Dobrushin's condition~\eqref{eq:Dobrushins uniqueness condition} is trivially satisfied.

Our next goal is to apply the theorem to proper colorings. The theorem does not apply directly, as the uniform distribution on proper $q$-colorings is not fully supported. One remedy is to introduce the anti-ferromagnetic $q$-state Potts model. At `inverse temperature' $\beta\ge 0$, this is the measure assigning probability proportional to
\begin{equation*}
  \exp\left(-\beta\sum_{u\sim v} \1_{f(u) = f(v)}\right)
\end{equation*}
to every $f:V\to\{1,\ldots, q\}$. It is readily verified that, on the one hand, this measure is fully supported and, on the other hand, the measure tends to the proper $q$-coloring measure as $\beta$ tends to infinity. One checks that that anti-ferromagnetic $q$-state Potts model satisfies Dobrushin's uniqueness condition whenever either $\beta\le \frac{C_q}{\Delta}$ or $q>2\Delta$, with $\Delta$ the maximal degree in $G$. This implies that the proper $q$-coloring model satisfies strong spatial mixing when $q>2\Delta$.

An alternative remedy that may be used to apply Dobrushin's uniqueness condition for proper $q$-colorings is the following observation. The assumption that $\mu$ is fully supported is only used in the proof of Lemma~\ref{lem:total variation distance with influences} and it thus suffices to find an extension of this lemma to a class of non-fully-supported measures which includes the proper $q$-coloring model. The lemma indeed admits such an extension to measures $\mu$ with a representation
\begin{equation}\label{eq:mu pair interaction}
  \mu(f) = \frac{1}{Z}\prod_{(u,v)\in\vec{E}}h_{(u,v)}(f(u), f(v))\prod_{v\in V} \lambda_v(f(v))
\end{equation}
where $\lambda_v:S\to(0,\infty)$, $h_{(u,v)}:S\times S\to [0,\infty)$ and $\vec{E}$ is an arbitrary orientation of the edges of $E$ (it is straightforward to find such a representation for the proper $q$-coloring model). Lemma~\ref{lem:total variation distance with influences} extends to such measures provided the influences $I_{u\to v}$ are set to $0$ when $u\not\sim v$ and the nearest-neighbor influences $I_{u\to v}$, $u\sim v$, are calculated in the star graph $G_v$, i.e., the graph whose vertex set is $v$ and the $G$-neighbors of $v$ and whose edge set is the set of edges between $v$ and each of these neighbors. The measure $\mu$ admits a natural restriction to $G_v$ via the formula~\eqref{eq:mu pair interaction} restricted to vertices and edges of $G_v$ (with $Z$ suitably modified), and the influences $I_{u\to v}$, $u\sim v$, should then be calculated with respect to this restricted measure. The proof starts by noting that the total variation distance for the marginal distribution at $v$ which appears in the statement of Lemma~\ref{lem:total variation distance with influences} may only increase when restricting the graph to $G_v$ and using the restricted version of $\mu$. From this one continues along the same steps of the above proof of the lemma.

We record the obtained conclusion for the proper $q$-coloring model.
\begin{corollary}
Uniform proper colorings satisfy strong spatial mixing whenever the number of colors is greater than twice the maximal degree. More precisely, if $G=(V,E)$ is a finite connected graph of maximal degree $\Delta$, and $\mu$ is the uniform distribution on proper $q$-colorings of $G$, then for any $B,U\subset V$ and feasible $\tau_1, \tau_2:B\to S$ it holds that
\begin{equation}\label{eq:strong spatial mixing_colorings}
  d_{\text{TV}}(\mu^{\tau_1}_U, \mu^{\tau_2}_U)\le |U| \left(\frac{\Delta}{q-\Delta}\right)^{d_G(U,B_{\tau_1,\tau_2})}.
\end{equation}
\end{corollary}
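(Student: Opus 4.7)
My plan is to apply the extension of Theorem~\ref{thm:Dobrushin uniqueness} to pair-interaction measures of the form~\eqref{eq:mu pair interaction}, as described in the paragraph preceding the corollary. The uniform measure on proper $q$-colorings fits this framework with $\lambda_v\equiv 1$ and $h_{(u,v)}(s,t)=\1_{s\neq t}$, and is nearest-neighbor in the sense that the star-graph influences $I_{u\to v}$ vanish for $u\not\sim v$. It therefore suffices to verify Dobrushin's condition~\eqref{eq:Dobrushins uniqueness condition} with each $I_{u\to v}$ computed in the star graph $G_v$, and to read off the resulting constant $\alpha$.

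Fix $v$ and a neighbor $u\sim v$. Under the restriction of $\mu$ to $G_v$, conditionally on colors $\tau:N(v)\to\{1,\dots,q\}$, the color at $v$ is uniform on the allowed set $C(\tau):=\{1,\dots,q\}\setminus\tau(N(v))$, whose size is at least $q-\deg(v)\ge q-\Delta$. If $\tau_1,\tau_2$ agree everywhere except at $u$, with $\tau_1(u)=a$ and $\tau_2(u)=b$, then the only way $C(\tau_1)$ and $C(\tau_2)$ can differ is that $a$ may have been added (if no other neighbor of $v$ is colored $a$) and $b$ may have been removed (if $b$ was not already forbidden by another neighbor). Hence $|C(\tau_1)\setminus C(\tau_2)|\le 1$ and $|C(\tau_2)\setminus C(\tau_1)|\le 1$. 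A direct computation gives, for finite non-empty sets $A_1, A_2$,
\[
d_{\text{TV}}(U_{A_1},U_{A_2}) \;=\; 1 - \frac{|A_1\cap A_2|}{\max(|A_1|,|A_2|)},
\]
and substituting the above information into this identity yields $I_{u\to v}\le 1/(q-\Delta)$.

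Summing over $u$ and using that at most $\Delta$ terms are nonzero gives
\[
\alpha \;:=\; \max_v \sum_{u\neq v} I_{u\to v} \;\le\; \frac{\Delta}{q-\Delta} \;<\; 1
\]
under the assumption $q>2\Delta$, and the extension of Theorem~\ref{thm:Dobrushin uniqueness} with this value of $\alpha$ then produces the claimed bound~\eqref{eq:strong spatial mixing_colorings}. The only substantive computation is the star-graph estimate $I_{u\to v}\le 1/(q-\Delta)$; the subtlety worth keeping track of there is that colors used by neighbors of $v$ other than $u$ remain in $\tau(N(v))$, which is precisely what forces the change in the allowed set under a single-site modification of $\tau$ to be at most one addition and one removal. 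Everything else is assembly of pieces already prepared in the preceding discussion.
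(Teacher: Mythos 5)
Your proposal is correct and follows essentially the same route as the paper: invoke the pair-interaction extension of Theorem~\ref{thm:Dobrushin uniqueness}, bound the star-graph influence by $I_{u\to v}\le \frac{1}{q-\Delta}$ via the total variation distance between uniform distributions on the two allowed color sets (which differ by at most one element each way), and sum over the at most $\Delta$ neighbors to get $\alpha\le\frac{\Delta}{q-\Delta}$. Your explicit identity $d_{\text{TV}}(U_{A_1},U_{A_2})=1-\frac{|A_1\cap A_2|}{\max(|A_1|,|A_2|)}$ is a clean way to package the case analysis the paper does directly.
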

\begin{proof}
Since the proper $q$-coloring model has the form~\eqref{eq:mu pair interaction}, it suffices to show that~\eqref{eq:Dobrushins uniqueness condition} holds and that $\alpha \le \frac{\Delta}{q-\Delta}$. To this end, it suffices to show that $I_{u \to v} \le \frac{1}{q-\Delta}$ for any adjacent vertices $u,v \in V$ (when calculating the influences on the star graph $G_v$).
Towards showing this, let $\tau_1,\tau_2 \colon V \setminus\{v\} \to S$ agree except on $u$. Let $A_1$ and $A_2$ be the set of colors appearing on $N(v)$ in $\tau_1$ and $\tau_2$, respectively. Note that either $A_1=A_2$ in which case $d_{\text{TV}}(\mu^{\tau_1}_v, \mu^{\tau_2}_v)=0$, or $(|A_1 \setminus A_2|,|A_2 \setminus A_1|) \in \{(0,1),(1,0),(1,1)\}$ in which case
\[ d_{\text{TV}}(\mu^{\tau_1}_v, \mu^{\tau_2}_v) = \frac{1}{q-\min\{|A_1|,|A_2|\}} .\]
Thus, since $|A_1| \le \Delta$, we always have that $d_{\text{TV}}(\mu^{\tau_1}_v, \mu^{\tau_2}_v) \le \frac{1}{q-\Delta}$.
\end{proof}

We conclude this section with several remarks.

There are extensions of Dobrushin's condition by Dobrushin and Shlosman~\cite{dobrushin1985constructive, dobrushin1985completely, dobrushin1987completely} which involve influences between a vertex and the joint distribution on a collection of other vertices. These can improve upon Dobrushin's uniqueness condition but are generally harder to check. We do not go into their theory here. A different consequence of Dobrushin's condition, or the improved Dobrushin--Shlosman conditions, concerns dynamical processes associated with the given measure -- Glauber and block-Glauber dynamics. It is known that strong spatial mixing is equivalent to rapid mixing (in time $O(|V|\log|V|)$) of such dynamics. For more on these topics, we refer the reader to Dyer--Sinclair--Vigoda--Weitz~\cite{dyer2004mixing}, Martinelli~\cite{martinelli1999lectures},
Martinelli--Olivieri~\cite{martinelli1994approach,martinelli1994approach2} and Weitz~\cite{weitz2005combinatorial}.

A different method of general applicability for showing strong spatial mixing and uniqueness of Gibbs measures is the method of disagreement percolation, introduced by van den Berg~\cite{van1993uniqueness} and further developed by van den Berg and Maes~\cite{van1994disagreement}. The method has the potential to improve upon Dobrushin's uniqueness condition, though such improvements, when present, are mostly significant in low dimensions. The method applies to the proper $q$-coloring model on $\Z^d$ but allows to deduce strong spatial mixing only when $q>Cd^2$, for some $C>0$, while Dobrushin's condition applies for $q>4d$.

As discussed, Dobrushin's uniqueness condition implies strong spatial mixing for the proper $q$-coloring model when $q>2\Delta$ with $\Delta$ the maximal degree in $G=(V,E)$. Vigoda~\cite{vigoda2000improved} proved that a natural `flip dynamics' mixes in time $O(|V|\log|V|)$ and Glauber dynamics mixes in time polynomial in $|V|$ when $q\ge\frac{11}{6}\Delta$. This was improved to $q\ge(\frac{11}{6}-\eps)\Delta$ for a fixed $\eps>0$ by Chen--Delcourt--Moitra--Perarnau--Postle~\cite{chen2019improved} (strong spatial mixing is not discussed in these papers but it is proved that uniqueness of the Gibbs measure of proper $q$-colorings of $\Z^d$ holds under the stated conditions on $q$). When $G$ is triangle-free and has $\Delta\ge 3$ (e.g., for domains in $\Z^d$), strong spatial mixing is proved under the weaker assumption $q>\alpha\Delta - \gamma$ with $\alpha$ the solution of $\alpha\log\alpha=1$ (so that $\alpha\approx 1.76$) and $\gamma = \frac{4\alpha^3-6\alpha^2-3\alpha+4}{2(\alpha^2-1)}\approx 0.47$ by Goldberg--Martin--Paterson~\cite{goldberg2005strong}. Using these general bounds and employing computer assistance for checking more refined conditions of the Dobrushin--Shlosman type, the current state of the art on $\Z^2$ is that strong spatial mixing is proved for $q\ge 6$ colors, and is believed to occur also for $q=4,5$ (see~\cite{salas1997absence, bubley1999approximately, molloy2004sampling, goldberg2004strong, goldberg2006improved}. See also~\cite{jalsenius2009strong} for the Kagome lattice). The case of $q=3$ colors is critical, and is the subject of the next lecture.

\section{Lecture 2 -- Criticality}
Recall the three types of behavior highlighted in Section~\ref{sec:concrete questions} for the quantity~\eqref{eq:probability of equal corners}. In this lecture, we discuss the uniform distribution on proper $3$-colorings of $\Z^2$, which is predicted in the physics literature to behave critically. We mention in passing that the uniform distribution on proper $4$-colorings of the triangular lattice is also predicted to behave critically (it is equivalent to the loop $O(2)$ model at $x=\infty$; see~\cite[Chapter 3]{peled2019lectures}). However, in this latter case, there are currently no rigorous results.

\subsection{Frozen colorings}\label{sec:frozen}

A \emph{frozen coloring} of an infinite graph is a proper coloring of the graph such that no finite region of the coloring can be modified while keeping the coloring proper. In other words, a proper $q$-coloring is frozen if any proper $q$-coloring which differs from it on only finitely many vertices must coincide with it.

It is possible to find a frozen 3-coloring of $\Z^2$ (more generally, frozen $q$-colorings of $\Z^d$ exist if and only if $q\le d+1$; see Alon--Brice\~no--Chandgotia--Magazinov--Spinka~\cite{alon2019mixing}). Indeed, it is straightforward to check that the coloring $f$ defined by $f(x,y):=x + y\pmod 3$ is a frozen coloring; see Figure~\ref{fig:frozen}.
\begin{figure}
 \centering
 \includegraphics[scale=1.4]{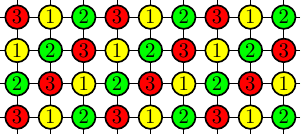}
 \caption{A frozen 3-coloring of $\Z^2$.}
 \label{fig:frozen}
\end{figure}
The existence of a frozen coloring already shows that correlations cannot decay for all boundary conditions (no weak or strong spatial mixing). However, there is great interest also in placing no boundary conditions (free boundary conditions on a domain) or using only a restricted set of boundary conditions, especially those for which the number of extensions to a proper coloring of the domain is close to the total number of proper colorings of the domain. Specifically, one may require the ratio of the logarithms of these two quantities tend to $1$ along a sequence of growing domains with specified boundary conditions, and the boundary conditions are then said to achieve \emph{maximal entropy} (see Section~\ref{sec:long-range order} for a related notion).

\subsection{Constant boundary conditions}

Define the (graph) ball of radius $L$ in $d$ dimensions by
\begin{equation}\label{eq:Lambda L def}
  \Lambda(L):=\{v\in\Z^d\colon \|v\|_1 \le L\},\quad \text{$L\ge 0$ integer}.
\end{equation}
The internal vertex boundary of a set $\Lambda\subset\Z^d$ is denoted
\begin{equation}\label{eq:internal vertex boundary}
  \intB\Lambda:=\{v\in\Z^d\colon v\in \Lambda, \exists w\sim v, w\notin\Lambda\}.
\end{equation}
We consider proper $3$-colorings of $\Lambda(L)$ for which all of $\intB\Lambda(L) = \Lambda(L) \setminus \Lambda(L-1)$ is assigned the same color. Such boundary conditions achieve maximal entropy, as shown by Galvin--Kahn--Randall--Sorkin~\cite[Lemma 5.1]{galvin2012phase} using \emph{Kempe chains}. Our goal is to show that if $d=2$ and $f_L$ is a uniformly sampled proper $3$-coloring of this type, then
\begin{equation}\label{eq:uniformity of color}
  \lim_{L\to\infty} \P(f_L(0,0) = i) = \frac{1}{3},\quad 1\le i\le 3,
\end{equation}
which is a form of correlation decay. While~\eqref{eq:uniformity of color} does not distinguish between the disordered and critical cases (in the sense discussed in Section~\ref{sec:concrete questions}), recent work of Duminil-Copin--Harel--Laslier--Raoufi--Ray~\cite{duminil2019logarithmic} can be used to obtain a rate of convergence in~\eqref{eq:uniformity of color}, showing that
\begin{equation}\label{eq:uniformity of color with power estimates}
  \frac{c}{L^{\alpha_1}}\le \left|\P(f_L(0,0) = i) - \frac{1}{3}\right| \le \frac{C}{L^{\alpha_2}},
\end{equation}
for some $C,c,\alpha_1, \alpha_2>0$ (see~\cite{rayspinka2020} for additional details). We proceed to describe the proof technique for~\eqref{eq:uniformity of color}, which is of interest also in other contexts.

\subsection{Height function}
We continue the discussion of proper $3$-colorings in general dimension $d$, specializing to the two-dimensional case later. A homomorphism height function (or $\Z$-homomorphism) on $\Z^d$ is a function $h:\Z^d\to\Z$ satisfying
\begin{equation}
  |h(u) - h(v)| = 1\quad\text{when $u$ is adjacent to $v$,}
\end{equation}
and taking even values on the even sublattice
\begin{equation*}
  \Ze^d := \bigg\{(x_1,x_2,\ldots,x_d)\in\Z^d\colon \sum_{i=1}^d x_i\text{ even}\bigg\}.
\end{equation*}

Observe that, trivially, if $h$ is a homomorphism height function, then the (pointwise) modulo $3$ of $h$ is a proper $3$-coloring of $\Z^d$. In the other direction, we have the following.
\begin{claim}
  For each proper $3$-coloring $f$ of $\Z^d$, there exists a homomorphism height function $h$ such that $f \equiv h \pmod{3}$.
\end{claim}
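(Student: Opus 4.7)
The plan is to construct $h$ by integrating increments $\pm 1$ along paths, with the increment on each edge chosen so that $h$ agrees with $f$ modulo $3$, and then to verify that the construction is path-independent.

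More precisely, identify the color set with $\{0,1,2\}=\Z/3\Z$, and note that whenever $u\sim v$ in $\Z^d$ we have $f(u)\neq f(v)$, so $f(v)-f(u)\in\{1,2\}\pmod 3=\{1,-1\}\pmod 3$. Hence there is a \emph{unique} $a(u,v)\in\{-1,+1\}$ with $a(u,v)\equiv f(v)-f(u)\pmod 3$, and clearly $a(v,u)=-a(u,v)$. Fix any even integer $h_0$ with $h_0\equiv f(0)\pmod 3$ (which exists since $\gcd(2,3)=1$). For a vertex $v$ and a path $0=v_0\sim v_1\sim\cdots\sim v_n=v$ in $\Z^d$, set
\begin{equation*}
  h(v):=h_0+\sum_{i=1}^{n}a(v_{i-1},v_i).
\end{equation*}

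The main obstacle, and the key step, is well-definedness: the value $h(v)$ must not depend on the chosen path. Equivalently, the sum of increments around every closed loop must vanish. Since $\Z^d$ (viewed as the $1$-skeleton of the standard cubical complex) is simply connected with its $2$-cells being the unit squares, every closed loop is a concatenation of elementary $4$-cycles together with back-and-forth traversals of edges (which contribute $a(u,v)+a(v,u)=0$), so it suffices to check the sum is zero around each unit square $u_0\sim u_1\sim u_2\sim u_3\sim u_0$. Write $S:=\sum_{i=0}^{3}a(u_i,u_{i+1})$ (indices mod $4$). On the one hand, by the defining congruence,
\begin{equation*}
  S\equiv\sum_{i=0}^{3}\bigl(f(u_{i+1})-f(u_i)\bigr)=0\pmod 3.
\end{equation*}
On the other hand, $S\in\{-4,-2,0,2,4\}$ as a sum of four $\pm 1$'s. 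The only multiple of $3$ in this set is $0$, so $S=0$, as required.

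It remains to check the three defining properties. By construction $|h(v)-h(u)|=1$ for $u\sim v$, and $h(v)\equiv f(v)\pmod 3$ (this congruence is preserved edge by edge from the base point). Finally, for $v\in\Ze^d$ the graph distance from $0$ to $v$ has the same parity as $\|v\|_1$, which is even; hence $h(v)-h_0$ is a sum of an even number of $\pm 1$'s, and so is even. Since $h_0$ was chosen even, $h(v)$ is even, completing the proof.
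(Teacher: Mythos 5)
Your proof is correct and follows essentially the same route as the paper's: define the $\pm 1$ increments from $f$ on each edge, fix an even base value at the origin, and verify consistency by reducing to the plaquettes that span the cycle space of $\Z^d$. Your verification on a single plaquette (a sum of four $\pm 1$'s that is divisible by $3$ must vanish) is a slicker replacement for the paper's enumeration of the six colorings of a $4$-cycle, but this is a local refinement rather than a different argument.
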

\begin{proof}
  Set $h$ at the origin to be an arbitrary even integer congruent to $f$ modulo 3 (e.g., 0,4,2 according to whether $f$ is 0,1,2, respectively). As $f$ uniquely defines $h(u) - h(v)$ for adjacent $u,v$, one needs only to check the consistency of this definition along cycles of $\Z^d$. As the cycle space is spanned by basic $4$-cycles (plaquettes -- sets of the form $\{v, v+e_i, v+e_i+e_j, v+e_j\}$ for $v\in\Z^d$, $1\le i,j\le d$ distinct, where $e_k$ denotes the $k$th unit vector), it suffices to check consistency on these, and this may be done simply by going over all $6$ options for the coloring of such a $4$-cycle (with the color of one vertex fixed).
\end{proof}
Thus the modulo $3$ mapping is a bijection between homomorphism height functions defined up to the addition of a global constant in $6\Z$ and proper $3$-colorings of $\Z^d$. The same bijection holds also between homomorphism height functions and proper $3$-colorings on $\Lambda(L)$. Our analysis of the coloring will go through the height function.

In one dimension, homomorphism height functions on an interval are simply paths of simple random walk. Thus, uniformly sampled homomorphisms on an interval of length $L$, fixed at one endpoint, have fluctuations of order $\sqrt{L}$. In two dimensions, homomorphism height functions are in bijection also with the uniform six-vertex model (square ice).

Now specializing to two dimensions, we study height functions $h_L$ on $\Lambda(L)$, $L$ even, which are uniformly sampled from the set of height functions fixed to equal zero on $\intB\Lambda(L)$ (Figure~\ref{fig:hom_with_level_line} shows such a function sampled on a square domain). What is the analogue of the limit statement~\eqref{eq:uniformity of color}? It is clear that $\E(h_L(0,0)) = 0$ by symmetry. It is then natural to associate the statement~\eqref{eq:uniformity of color} on the modulo $3$ of $h_L(0,0)$ with the statement that the fluctuations of $h_L(0,0)$ grow unboundedly with $L$ (see Section~\ref{sec:implication on coloring} below).
\begin{theorem}(Chandgotia--Peled--Sheffield--Tassy~\cite{chandgotia2018delocalization})\label{thm:delocalization}
When $d=2$, we have
\begin{equation}
  \lim_{L\to\infty} \var(h_L(0,0)) = \infty.
\end{equation}
\end{theorem}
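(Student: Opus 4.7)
My plan is to exploit the planarity of $\Z^2$ via the \emph{level-loop representation} of homomorphism height functions combined with a reflection symmetry. Given $h_L$ with zero boundary on $\Lambda(L)$, for each half-integer $k+\tfrac12$ the dual edges separating vertices with $h_L\le k$ from those with $h_L\ge k+1$ assemble into a collection of non-crossing simple loops in the plane, since each dual vertex is incident to an even number of such edges (adjacent heights differ by exactly one). The loops all lie in the interior because $h_L\equiv 0$ on $\intB\Lambda(L)$, and any lattice path from $(0,0)$ to the boundary must cross, for every half-integer strictly between $0$ and $h_L(0,0)$, at least one such loop, so there are at least $|h_L(0,0)|$ distinct level loops surrounding the origin. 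The key symmetry is: for any level-$(k+\tfrac12)$ loop $\gamma$, the map $h(v)\mapsto 2k+1-h(v)$ for $v$ strictly inside $\gamma$ (and $h$ unchanged outside) is an involution on the set of valid height functions with the given boundary data, so the uniform measure is invariant under independent flips at disjoint loops. Combined with the global involution $h\mapsto -h$, this makes $h_L(0,0)$ symmetric about $0$, so $\var(h_L(0,0))=\E[h_L(0,0)^2]$, and it suffices to prove non-tightness: $\P(|h_L(0,0)|\le K)\to 0$ for every fixed $K$.

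To obtain non-tightness I would prove a uniform-in-scale lower bound on the probability that each dyadic annulus $A_j:=\Lambda(2^{j+1})\setminus\Lambda(2^j)$ contains a level loop around the origin. Concretely, for every $j$ with $2^{j+1}\le L/2$, one seeks a bound $\P(\text{some level loop around }(0,0)\text{ uses only edges in }A_j)\ge \delta$ with $\delta>0$ independent of $j$ and $L$. The events associated to disjoint dyadic annuli are determined by disjoint edge sets, and the Markov property of $h_L$ (conditional on its values on the inner and outer boundaries of $A_j$) gives near-independence across scales. Summing the bounds across the $\Theta(\log L)$ annuli, and then using the flip symmetry to promote each such loop to an increment in $|h_L(0,0)|$, forces $|h_L(0,0)|>K$ with probability tending to $1$ as $L\to\infty$ for arbitrary fixed $K$.

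The main obstacle is the RSW-type annulus crossing estimate. Because the law of level loops is not monotone in any single parameter, the classical Harris--Russo--Seymour--Welsh arguments do not apply directly. I would replace FKG by two inputs: first, the reflection symmetry above, which identifies the laws of rectangle crossings under reflections across symmetry axes of the domain and so allows the `easy' and `hard' crossing directions to be exchanged; second, a planar duality between $\{h\le k\}$-crossings and $\{h\ge k+1\}$-crossings in perpendicular directions, combined with the level-loop symmetry, to promote a positive crossing probability in a rectangle to a surrounding loop in an annulus. Carrying out this RSW-type bootstrap in the absence of FKG is the technical heart of~\cite{chandgotia2018delocalization}; in a compressed account my plan would be to invoke that construction as a black box to conclude $\E[N(h_L)]\to\infty$ and hence the theorem.
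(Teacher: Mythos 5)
Your route is genuinely different from the one the paper (following~\cite{chandgotia2018delocalization}) actually takes. The paper's argument is ergodic-theoretic, not geometric: one shows that bounded variance along a subsequence would force local convergence of $h_L$ to a $\Ze^2$-translation-invariant Gibbs measure, and then rules out the existence of any $\Ze^2$-ergodic Gibbs measure (Theorem~\ref{thm:no ergodic Gibbs measures}) by combining Sheffield's uniqueness-up-to-additive-constant theorem (Theorem~\ref{thm:uniqueness_up_to_additive_constant}, proved via cluster swapping, extremality of ergodic measures and a planarity/no-coexistence argument) with the reflection $g(v)=f(-v+(1,0))-1$, which yields the parity contradiction $4k=2$. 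What you propose instead -- level loops, flip symmetry inside loops, and a multi-scale RSW-type bound on the probability of a surrounding loop in each dyadic annulus -- is essentially the strategy of the later quantitative work~\cite{duminil2019logarithmic}, which indeed yields the stronger conclusion $\var(h_L(0,0))\asymp\log L$.

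That said, as written the proposal has two genuine gaps. First, and most seriously, the entire quantitative content -- the uniform-in-scale annulus estimate $\P(\exists\text{ level loop around }0\text{ in }A_j)\ge\delta$ -- is invoked as a black box attributed to~\cite{chandgotia2018delocalization}; but no such RSW construction appears in that paper, so the black box you are citing does not exist where you claim it does (it is the technical heart of~\cite{duminil2019logarithmic}, and there it crucially \emph{uses} the FKG inequality for $|h|$ from~\cite{Benjamini2000} rather than dispensing with positive association as you suggest). Since this estimate is the whole proof, deferring it to a nonexistent reference leaves the argument unsupported. Second, your bridge from loops to heights points the wrong way: you correctly observe that a path from the origin to the boundary crosses at least $|h_L(0,0)|$ level loops, i.e.\ $N\ge|h_L(0,0)|$, but what is needed is the reverse implication -- that many surrounding loops force large fluctuations of $h_L(0,0)$. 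Many nested loops can all sit at level $\tfrac12$ (heights oscillating between $0$ and $1$), so $N\to\infty$ does not by itself give non-tightness. The correct mechanism is the one you only gesture at: conditionally on the outermost loops in disjoint annuli, the flip involutions make the signs of the height increments across these loops independent and symmetric, so that $\var(h_L(0,0))\gtrsim\E[N]$. Making that conditioning rigorous (the loops are functions of $h_L$ itself) is a nontrivial step that needs to be spelled out, not just the phrase ``promote each such loop to an increment.''
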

\begin{figure}
	\begin{center}
		\includegraphics[scale=0.43]{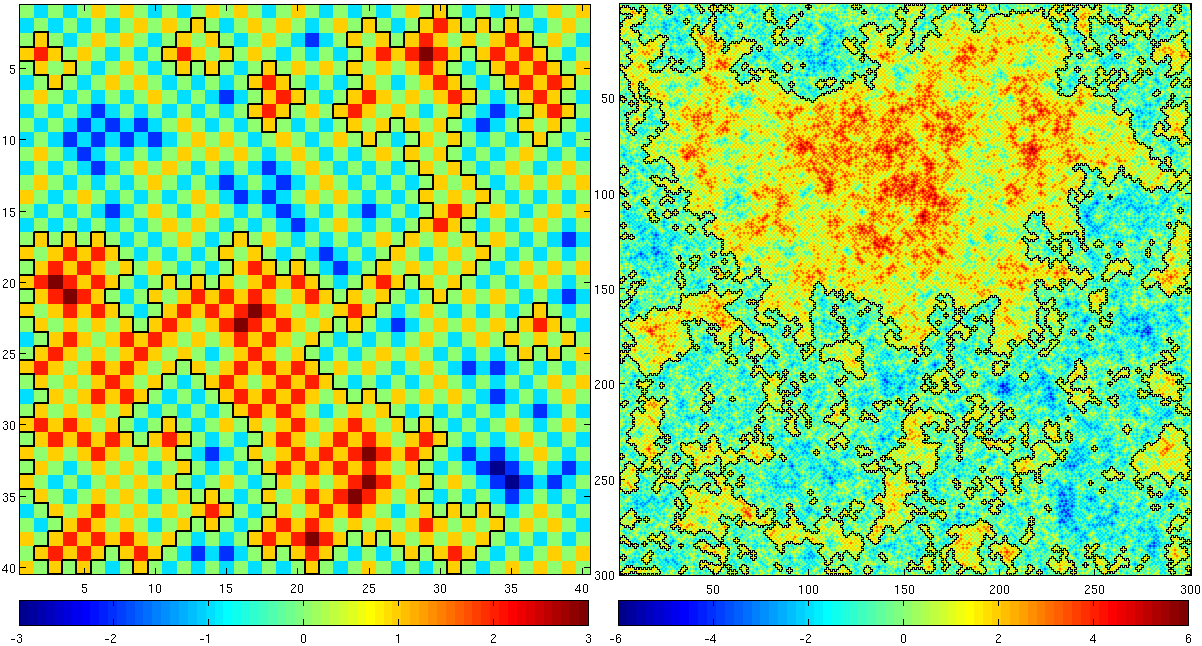}
	\end{center}
	\caption{A uniformly sampled homomorphism height function with zero boundary values (on every second vertex) on a $40\times40$ and $300\times300$ squares, sampled using coupling from the past~\cite{propp1996exact}. The non-trivial outermost level sets separating zeros and ones are highlighted in black. Prepared with help from Steven M. Heilman. Theorem~\ref{thm:delocalization} implies that the height at the center of the squares diverges as the square size increases.}
	\label{fig:hom_with_level_line}
\end{figure}

The work~\cite{duminil2019logarithmic} extends this result to prove that the variance is of order $\log(L)$. It is further conjectured that the scaling limit of $h_L$ is the continuum Gaussian free field, and that the level lines of $h_L$ (see Figure~\ref{fig:hom_with_level_line}) scale to the Conformal Loop Ensemble (CLE) with parameter $\kappa = 4$. In contrast, it is conjectured that in dimensions $d\ge 3$ the fluctuations (at a given vertex) of uniformly sampled homomorphism height functions on a domain with zero boundary conditions remain bounded uniformly in the domain. This is presently known only in high dimensions~\cite{Kahn2001hypercube, Galvin2003hammingcube, peled2010high}. The works~\cite{benjamini1994tree, Benjamini2000, benjamini2000upper, loebl2003note, benjamini2007random, erschler2009random, peled2012expanders, peled2013grounded, peled2014random, wu2016average, bok2018graphspecial, bok2018algorithmic, bok2018graph, berger2019distribution} explore the properties of homomorphism height functions (and related Lipschitz functions) on general graphs.

\subsubsection{Implication for colorings}\label{sec:implication on coloring}
When trying to deduce information on the distribution of the modulo $3$ of $h_L(0,0)$ from Theorem~\ref{thm:delocalization} one is naturally led to consider the regularity of the distribution of $h_L(0,0)$. The following fact shows that the distribution is log-concave in a natural sense and thus cannot be too irregular. Log-concavity of the single-site marginal distributions is proved for homomorphism height functions by Kahn~\cite[Proposition 2.1]{Kahn2001hypercube} and established more generally for random surfaces with nearest-neighbor convex potentials by Sheffield~\cite[Lemma 8.2.4]{Sheffield2005random}. We repeat the argument given in~\cite[Proposition 7.1]{chandgotia2018delocalization}.
\begin{lemma}(Log-concavity)\label{lem:log-concavity} Let $\Lambda\subset\Z^d$ be finite and $\tau:\intB\Lambda\to\Z$ be such that there exist homomorphism height functions on $\Lambda$ which equal $\tau$ on $\intB\Lambda$. Let $h$ be sampled uniformly from the set of such homomorphism height functions. Then
\begin{equation}\label{eq:log-concave}
  \P(h(v) = i)^2\ge \P(h(v) = i+2j)\P(h(v) = i-2j)
\end{equation}
for any $v\in\Lambda$, any integer $j$ and any integer $i$ of the same parity as $\|v\|_1$.
\end{lemma}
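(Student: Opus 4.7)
Let $\Omega_k$ denote the set of homomorphism height functions on $\Lambda$ with boundary $\tau$ and value $k$ at $v$. The log-concavity inequality~\eqref{eq:log-concave} is equivalent to $|\Omega_i|^2 \ge |\Omega_{i+2j}|\,|\Omega_{i-2j}|$; the case $j=0$ is trivial and $j<0$ is symmetric, so we may assume $j\ge 1$. I plan to prove the inequality by constructing a map
\[
\Psi \colon \Omega_{i+2j} \times \Omega_{i-2j} \longrightarrow \Omega_i \times \Omega_i
\]
built from an iterated level-set flip, and then controlling its multiplicity.

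Given $(h_1,h_2) \in \Omega_{i+2j} \times \Omega_{i-2j}$, consider the difference $D := h_1 - h_2$: this is even-valued on $\Lambda$, vanishes on $\intB\Lambda$, equals $4j$ at $v$, and has edge-increments in $\{0,\pm 2\}$. Let $C$ denote the connected component of $v$ in $\{u \in \Lambda : D(u) \ge 2\}$ and define one flip step by
\[
(h_1,h_2) \;\longmapsto\; \bigl(h_1 - 2\one_C,\ h_2 + 2\one_C\bigr).
\]
The key verification is that this produces a valid pair of homomorphism height functions with the same boundary values: parity is preserved since the shifts are even; $C$ avoids $\intB\Lambda$ because $D=0$ there; and across any edge $(u,w)$ with $u\in C$, $w\notin C$, the combination of $D(u)\ge 2$, $D(w)<2$, and $|D(u)-D(w)|\le 2$ forces $D(u)=2$ and $D(w)=0$, which in turn pins $h_1(u)-h_1(w)=+1$ and $h_2(u)-h_2(w)=-1$; after the shifts these edge-differences become $-1$ and $+1$, still valid. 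Each flip step decreases $D(v)$ by exactly $4$, so iterating $j$ times brings $D(v)$ down to $0$ and lands in $\Omega_i\times\Omega_i$.

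The main obstacle is to control the multiplicity of $\Psi$. A direct injectivity argument — reading the ``last flipped component'' $C_j$ off the image as the connected component of $v$ in $\{u : g_1(u)-g_2(u) \ge -2\}$ — fails in small one-dimensional examples, because two distinct preimages can follow sequences of components $(C_1,\ldots,C_j)$ of different sizes yet collapse to the same $(g_1,g_2)$ (the extra shifts on $h_1$ being exactly compensated by enlargements of the flipped region). To close the argument I would refine the construction by either (a) recording into the image the sequence of components $(C_1,\dots,C_j)$, or a combinatorial surrogate such as the flip-multiplicity function $m(u):=\#\{\ell : u \in C_\ell\}$, so that the augmented map becomes a genuine injection into a product space of controlled size; or (b) identifying the canonical pre-image of $(g_1,g_2)$ through a carefully chosen \emph{nested} family of components $C_1 \supseteq C_2 \supseteq \cdots \supseteq C_j$ (e.g., taking $C_\ell$ to be the component of $v$ at the threshold $D^{(\ell-1)}\ge 2\ell$ rather than $\ge 2$), making the reverse process deterministic. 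Working out this combinatorial bookkeeping is the real content of the proof and is where the argument of~\cite[Proposition~7.1]{chandgotia2018delocalization} would be repeated.
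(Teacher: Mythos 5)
There is a genuine gap: you correctly diagnose that the iterated flip at threshold $2$ is not invertible, but you then stop, and the ``combinatorial bookkeeping'' you defer to the reference is precisely the content that is missing. The paper's proof avoids the iteration entirely by performing a \emph{single} surgery at the threshold $2j$, and — crucially — by \emph{swapping} the two functions off the surgered set rather than leaving each coordinate a modification of itself. Concretely, let $\Lambda'$ be the connected component of $v$ in $\{u : h^+(u) > h^-(u)+2j\}$ (nonempty since $D(v)=4j$, and disjoint from $\intB\Lambda$ since $D=0$ there), and map $(h^+,h^-)$ to the pair $(h,h')$ defined by $h:=h^+-2j$, $h':=h^-+2j$ on $\Lambda'$ and $h:=h^-$, $h':=h^+$ on $\Lambda\setminus\Lambda'$. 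The edge check at $\partial\Lambda'$ is the same one you carried out (the even, $2$-Lipschitz difference $D$ must equal $2j+2$ just inside and $2j$ just outside, pinning the increments), and both $h,h'$ land in $\Omega_i$.

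The point of the swap is that it makes injectivity a one-line verification: on $\Lambda'$ one has $h-h'=D-4j>-2j$, while at every vertex adjacent to $\Lambda'$ from outside $h-h'=-D=-2j$ exactly, so $\Lambda'$ is recovered from the image as the connected component of $v$ in $\{h>h'-2j\}$, and then $(h^+,h^-)$ is recovered by undoing the shift on $\Lambda'$ and the swap off it. Your non-swap map fails this test even for $j=1$, for the reason you yourself observe: off $C$ the difference $g_1-g_2$ equals $D\ge 0>-2$, so the component of $v$ in $\{g_1\ge g_2-2\}$ strictly contains $C$ and the flipped region cannot be read off the image. Your option (b) is the right instinct (a single deterministic threshold), but as stated it still shifts by $2$ per level and still omits the swap, so it neither produces a well-defined single map nor yields invertibility; replacing it with the one-shot swap at level $2j$ above closes the argument.
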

\begin{proof}
  Fix an integer $i$ with the same parity as $\|v\|_1$ and a positive integer $j$. For an integer $k$, let $H_k$ be the set of homomorphism height functions on $\Lambda$ which coincide with $\tau$ on $\intB\Lambda$ and equal $k$ at $v$. To prove~\eqref{eq:log-concave} it suffices to build an injection from $H_{i+2j}\times H_{i-2j}$ to $H_i\times H_i$.

Let $h^+\in H_{i+2j}$ and $h^-\in H_{i-2j}$. Let $\Lambda'\subset\Z^d$ be the largest connected set containing $v$ on which $h^+>h^-+2j$. As $h^+ = h^-$ on $\intB\Lambda$ we must have that $\Lambda'\cap \intB\Lambda = \emptyset$. We may thus define homomorphism height functions $h, h'\in H_i$ by
\begin{align*}
  &h_{\Lambda'}:=(h^+-2j)_{\Lambda'},\quad h_{\Lambda\setminus{\Lambda'}}:=h^-_{\Lambda\setminus{\Lambda'}},\\
  &h'_{\Lambda'}:=(h^-+2j)_{\Lambda'},\quad h'_{\Lambda\setminus{\Lambda'}}:=h^+_{\Lambda\setminus{\Lambda'}}.
\end{align*}
Furthermore, $\Lambda'$ can be recovered from the pair $(h, h')$ as the largest connected set containing $v$ on which $h>h'-2j$. Thus the map $(h^+, h^-)\mapsto (h, h')$ is injective.
\end{proof}

It is straightforward to conclude from Lemma~\ref{lem:log-concavity} and Theorem~\ref{thm:delocalization} that
\begin{equation}\label{eq:unlikeliness of specific heights}
  \lim_{L\to\infty} \P(h_L(0,0) = i) = 0\quad\text{for each $i$}.
\end{equation}
We now have all the tools necessary to conclude the asymptotic uniformity of the modulo $3$ of $h_L(0,0)$.
\begin{corollary}
\begin{equation}
  \lim_{L\to\infty} \P(h_L(0,0)\equiv i\!\!\!\!\pmod{3}) = \frac{1}{3}\quad\text{for each $i$}.
\end{equation}
\end{corollary}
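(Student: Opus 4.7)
The plan is to deduce asymptotic uniformity of $h_L(0,0)\bmod 3$ from three ingredients: the global sign symmetry $h \mapsto -h$, the log-concavity of Lemma~\ref{lem:log-concavity}, and the vanishing of single-site probabilities from~\eqref{eq:unlikeliness of specific heights}. Since $\|(0,0)\|_1 = 0$ is even, $h_L(0,0)$ is always even, so set $p^L_k := \P(h_L(0,0) = 2k)$ for $k \in \Z$. The involution $h \mapsto -h$ preserves the uniform measure on homomorphism height functions vanishing on $\intB\Lambda(L)$, so $p^L_k = p^L_{-k}$. Applying Lemma~\ref{lem:log-concavity} at $v=(0,0)$ with $i = 2k$ and $j = 1$ yields $(p^L_k)^2 \geq p^L_{k-1}\,p^L_{k+1}$, so $(p^L_k)_{k \in \Z}$ is a symmetric, log-concave probability mass function on $\Z$.

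Next I would show $(p^L_k)$ is unimodal with peak at $k=0$. Log-concavity means the ratios $r_k := p^L_{k+1}/p^L_k$ are non-increasing in $k$ on the support, while symmetry forces $r_{-1} = p^L_0/p^L_{-1} = p^L_0/p^L_1 = 1/r_0$. Combining, $1/r_0 = r_{-1} \geq r_0$, so $r_0 \leq 1$, and then monotonicity of $r_k$ shows that $p^L_k$ is non-increasing for $k \geq 0$ and non-decreasing for $k \leq 0$. In particular $p^L_0 = \max_k p^L_k$, and~\eqref{eq:unlikeliness of specific heights} with $i=0$ gives $p^L_0 \to 0$ as $L \to \infty$.

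Finally, set $P^L_j := \sum_{k \equiv j \pmod 3} p^L_k$ for $j \in \{0,1,2\}$. The unit shift $k \mapsto k+1$ cyclically permutes the three residue classes, so comparing the law $(p^L_k)$ with its shift $(p^L_{k+1})$ via total variation gives
\begin{equation*}
  \bigl|P^L_j - P^L_{(j+1)\bmod 3}\bigr| \;\leq\; \tfrac{1}{2}\sum_{k \in \Z}|p^L_k - p^L_{k+1}| \;=\; p^L_0,
\end{equation*}
where the equality is a one-line telescoping computation using the unimodality with peak at $0$ (each of the half-lines $k \geq 0$ and $k \leq -1$ contributes $p^L_0$). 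Thus $P^L_0,P^L_1,P^L_2$ are pairwise within $2 p^L_0$, and combined with $P^L_0+P^L_1+P^L_2=1$ each lies in $[\tfrac{1}{3}-2p^L_0,\,\tfrac{1}{3}+2p^L_0]$, which tends to $\tfrac{1}{3}$. Since $\P(h_L(0,0) \equiv i \pmod 3) = P^L_{j(i)}$ for the unique $j(i) \in \{0,1,2\}$ with $2j(i) \equiv i \pmod 3$, the corollary follows. I do not expect a serious obstacle; the only delicate step is the unimodality deduction, which genuinely uses symmetry and log-concavity together (log-concavity alone does not pin the mode at $0$).
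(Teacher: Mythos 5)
Your proposal is correct and follows essentially the same route as the paper: both arguments combine the $h\mapsto -h$ symmetry with Lemma~\ref{lem:log-concavity} to show that $(p_k)_{k\ge 0}$ is non-increasing, and then conclude from~\eqref{eq:unlikeliness of specific heights}. The only difference is cosmetic bookkeeping at the end --- you bound the discrepancy between residue classes by a telescoping total-variation estimate against the unit shift, whereas the paper groups the terms of $q_0$ and $q_1=q_2$ explicitly and compares them directly.
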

\begin{proof}
Fix $L$ and denote $p_i := \Pr(h_L(0,0)=2i)$ for $i \in \Z$.
By symmetry, $p_i = p_{-i}$ for all $i$.
Let $q_0,q_1,q_2$ denote the probabilities that $h_L(0,0) \pmod 3$ equals $0,1,2$, respectively. Then
\[ q_0 = p_0 + 2(p_3+p_6+\cdots) \qquad\text{and}\qquad q_1=q_2=(p_1+p_2)+(p_4+p_5)+\cdots .\]
By~\eqref{eq:log-concave}, we have $p_0^2 \ge p_1 p_{-1} = p_1^2$, so that $p_0 \ge p_1$. Now suppose we have already shown that $p_{i-1} \ge p_i$ for some $i \ge 1$. Then using~\eqref{eq:log-concave} again, we have $p_i^2 \ge p_{i-1}p_{i+1} \ge p_i p_{i+1}$, so that $p_i \ge p_{i+1}$ (note that $p_i=0$ implies that $p_{i+1}=0$). Thus, $(p_i)_{i \ge 0}$ is a non-increasing sequence. It follows that $q_0 \le p_0 + q_1$ and $q_0 \ge q_1 - p_1$. Since~\eqref{eq:unlikeliness of specific heights} implies that $p_0 \to 0$ and $p_1 \to 0$ as $L \to \infty$, it follows that $q_0-q_1 \to 0$ as $L \to \infty$. Hence, in the limit as $L \to \infty$, the probabilities $q_0,q_1,q_2$ are equal and sum up to 1, and must therefore all equal $1/3$.
\end{proof}

\subsection{Delocalization of the height function}
\subsubsection{Gibbs measures}
The proof of Theorem~\ref{thm:delocalization} relies on methods from ergodic theory, making use of homomorphism height functions defined on the whole of $\Z^d$. The notion of a uniformly sampled homomorphism height function in the whole of $\Z^d$ is not well defined, and the standard substitute for it is the notion of a \emph{Gibbs measure} (for the uniform specification). A measure $\mu$ on homomorphism height functions on $\Z^d$ is called \emph{Gibbs} if the following holds: Let $f$ be sampled from $\mu$. For each finite subset $V$ of $\Z^d$, almost surely, conditioned on $f|_{V^c}$ the distribution of $f|_V$ is uniform on all extensions of $f|_{V^c}$ to a homomorphism height function. The set of Gibbs measures forms a convex set.

\emph{Example:} Let $h$ be any height function whose modulo $3$ is the frozen proper $3$-coloring discussed in Section~\ref{sec:frozen}. Then the delta measure on $h$ is a Gibbs measure. Our interest, however, is in Gibbs measures with translation-invariance properties as we now describe.

For a sublattice $\mathcal{L}\subset\Z^d$, a Gibbs measure is called \emph{$\mathcal{L}$-translation-invariant} if samples from the measure are invariant in distribution to translations from $\mathcal{L}$. An $\mathcal{L}$-translation-invariant measure is called \emph{$\mathcal{L}$-ergodic} if it gives probability zero or one to each event which is invariant under translations from $\mathcal{L}$.

The set of \emph{$\mathcal{L}$-translation-invariant} Gibbs measures forms a convex set, whose extreme points are exactly the \emph{$\mathcal{L}$-ergodic} Gibbs measures~\cite[Chapter 14]{georgii2011gibbs}.

A (not necessarily invariant) measure is called \emph{extremal} (or tail-trivial) if it assigns probability zero or one to each event which can be determined from the values of the sample outside every finite set. These are exactly the extreme points of the set of all Gibbs measures~\cite[Chapter 7]{georgii2011gibbs}.

\subsubsection{Delocalization}
We proceed to discuss the delocalization of $h_L$ as stated in Theorem~\ref{thm:delocalization}. One may show, by an argument that we do not detail here which makes use of the positive association (FKG) of the absolute value of $h_L$ (proved in~\cite[Proposition 2.3]{Benjamini2000}), that if Theorem~\ref{thm:delocalization} does not hold, i.e.,
\begin{equation}
  \liminf_{L\to\infty} \var(h_L(0,0)) < \infty,
\end{equation}
then the distribution of $h_L$ converges locally as $L\to\infty$ to a $\Ze^2$-translation-invariant Gibbs measure (an analogous statement holds in any dimension~\cite[Theorem 1.1]{chandgotia2018delocalization}). Thus, Theorem~\ref{thm:delocalization} is a consequence of the following statement.
\begin{theorem}\label{thm:no ergodic Gibbs measures}
  There are no $\Ze^2$-ergodic Gibbs measures for two-dimensional homomorphism height functions.
\end{theorem}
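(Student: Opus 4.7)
The plan is to assume for contradiction that $\mu$ is a $\Ze^2$-ergodic Gibbs measure for homomorphism height functions on $\Z^2$ and derive a contradiction. First I would observe that the uniform specification is invariant under the global height shift $h \mapsto h+2$, so for each $k \in \Z$ the pushforward $\mu_k$ of $\mu$ under $h \mapsto h+2k$ is again a $\Ze^2$-ergodic Gibbs measure. If $\mu_k = \mu$ for some $k \neq 0$, then the law of the integer random variable $h(0)$ under $\mu$ would be periodic with period $2k$, which is impossible. Consequently the $\mu_k$ are pairwise distinct, and being extreme points of the set of $\Ze^2$-translation-invariant Gibbs measures they are pairwise mutually singular.

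Next I would use the Birkhoff ergodic theorem applied to the $\Ze^2$-action: for each even $k$, the empirical density of $\{v \in \Ze^2 \colon h(v) \ge k\}$ in a growing sequence of boxes converges $\mu$-a.s.\ to the deterministic constant $\rho(k) := \mu(h(0)\ge k)$. The sequence $\rho(k)$ is non-increasing with $\rho(k)\to 1$ as $k\to-\infty$ and $\rho(k)\to 0$ as $k\to\infty$, and the mutual singularity of the $\mu_k$ rules out $\rho$ being constant. The heart of the proof must exploit planarity, since the statement fails in high dimensions. My approach would be a level-set percolation argument: for each $k$ consider the super-level set $S_k^+ := \{h\ge k\}$ and its complement $S_k^- := \{h<k\}$. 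The Lipschitz constraint $|h(u)-h(v)|=1$ across edges forces the interface between $S_k^+$ and $S_k^-$ to be a disjoint family of simple curves in the dual lattice. By a Burton--Keane style uniqueness argument for $\Ze^2$-ergodic measures, at most one of $S_k^+, S_k^-$ can percolate, and in a planar random subset each cannot have an infinite cluster simultaneously. I would combine this with the obvious monotone coupling of $\mu$ with $\mu_1$, in which $h^{(1)} \sim \mu_1$ satisfies $h^{(1)} = h+2 > h = h^{(0)}$ pointwise, to show that the level sets $S_k^+$ under $\mu$ must coincide (up to sets of density zero) with the sets $S_{k+2}^+$ under $\mu_1$, and hence $\rho(k) = \rho(k+2)$, contradicting $\sum_k \rho(k)-\rho(k+2) = 1$.

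The main obstacle I expect is step three: rigorously extracting the contradiction from planar topology. A $\Ze^2$-ergodic Gibbs measure carries no a priori tail estimates on $h(0)$, so no quantitative input is available and one is forced to work purely with qualitative tools (ergodicity, planar duality, Burton--Keane, and the monotone shift coupling). A subtlety is that one should work on the larger symmetry group of $\Z^2$ rather than just $\Ze^2$ --- reducing to a fully translation-invariant setting, perhaps by averaging $\mu$ with its translate by a unit vector and then passing to an ergodic component --- to ensure that the Burton--Keane hypothesis (finite-energy plus translation-invariant ergodicity) is actually satisfied for the level-set process; making this reduction compatible with the height-function framework (which distinguishes $\Ze^2$ from its odd translate by parity) is the delicate technical point.
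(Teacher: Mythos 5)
There is a genuine gap, and it sits exactly at the step you flagged as the "heart" of the argument. The coupling of $\mu$ with $\mu_1$ given by $h^{(1)}=h+2$ identifies the level set $S^+_{k+2}$ of $h^{(1)}$ with the level set $S^+_k$ of $h$ \emph{tautologically}: it yields $\mu_1(h(0)\ge k+2)=\mu(h(0)\ge k)$, i.e.\ $\rho(k)=\rho(k)$, and not $\rho(k)=\rho(k+2)$. The identity $\rho(k)=\rho(k+2)$ is equivalent to $\mu_1=\mu$ (periodicity of the law of $h(0)$), which you correctly ruled out at the outset; so no argument can establish it, and your proposed contradiction cannot be reached this way. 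More structurally, everything you use --- shift-covariance of the family $(\mu_k)$, ergodic densities, level-set percolation, Burton--Keane, planar no-coexistence --- is invariant under replacing $\mu$ by any of its even shifts, so it can at best show that all $\Ze^2$-ergodic Gibbs measures form a single orbit under $h\mapsto h+2k$ (this is precisely Sheffield's Theorem~\ref{thm:uniqueness_up_to_additive_constant}, whose proof in the paper uses exactly your toolkit: cluster swapping, positive association and extremality of ergodic measures, and the planar no-coexistence statement~\eqref{eq:no coexistence} for the comparison configurations $\sigma^k(v)=\1_{f(v)\ge g(v)+2k}$ of two \emph{independent} samples). That classification is perfectly consistent with existence, so an extra input is needed to kill it.

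The missing idea is the paper's anti-symmetry trick: from $f\sim\mu$ define $g(v):=f(-v+(1,0))-1$. This is again a homomorphism height function (the point reflection $v\mapsto -v+(1,0)$ swaps the two sublattices, and subtracting $1$ restores the parity constraint), and its law $\mu'$ is again $\Ze^2$-ergodic Gibbs. Theorem~\ref{thm:uniqueness_up_to_additive_constant} forces a coupling with $\tilde f=\tilde g+2k$ for some integer $k$, hence $\tilde f((0,0))+\tilde f((1,0))=\tilde g((0,0))+\tilde g((1,0))+4k$; but the definition of $g$ gives $f((0,0))+f((1,0))=g((0,0))+g((1,0))+2$ identically, so in distribution $4k=2$, contradicting $k\in\Z$. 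In short: you need a second ergodic Gibbs measure that is provably an \emph{odd} shift of $\mu$, and only the reflection composed with negation produces one; shifts of $\mu$ by $2k$ never can. Your remarks about upgrading $\Ze^2$-invariance to full $\Z^2$-invariance for Burton--Keane are a side issue (the paper works with $\Ze^2$ throughout and replaces finite energy by an ad hoc substitute); the real work is the uniqueness-up-to-even-shift theorem plus the reflection argument above.
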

To prove this theorem we require the following strong result of Sheffield~\cite{Sheffield2005random}, which applies in much greater generality to two-dimensional random surfaces with nearest-neighbor convex potentials. An alternative proof, for the case of homomorphism height functions, is given in~\cite[Theorem 3.1]{chandgotia2018delocalization}, the main ideas of which are described in Section~\ref{sec:uniqueness-of-ergodic-Gibbs} below.

\begin{theorem}(uniqueness of ergodic Gibbs measures)\label{thm:uniqueness_up_to_additive_constant}
      In dimension $d=2$: Let $\mu, \mu'$ be $\Ze^2$-ergodic Gibbs measures. Then there is an integer $k$ and a coupling of $\mu,\mu'$ such that if $(\tilde{f},\tilde{g})$ are sampled from the coupling then, almost surely,
      \begin{equation}\label{eq:f_equal_g_plus_2k}
        \tilde{f} = \tilde{g} + 2k.
      \end{equation}
    \end{theorem}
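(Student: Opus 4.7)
The plan is to couple $\mu$ and $\mu'$ by a $\Ze^2$-ergodic coupling $(\tilde f,\tilde g)$ and show that the difference $D:=\tilde f-\tilde g$ is almost surely constant. Since $\tilde f$ and $\tilde g$ share the parity $\|v\|_1\pmod 2$ at each vertex, $D$ is even-valued, and across any edge $v\sim w$ one has $D(w)-D(v)\in\{-2,0,2\}$, so $D/2$ is an integer-valued $1$-Lipschitz function. Once $D$ is shown to be a.s.\ constant equal to some $2k$, ergodicity forces $k$ to be deterministic, giving the claim~\eqref{eq:f_equal_g_plus_2k}.

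The first ingredient is a cluster swap, directly analogous to the injection in the proof of Lemma~\ref{lem:log-concavity}. Fix $k\in\Z$ and let $C$ be a finite connected component of $\{v\in\Z^2:D(v)\ge 2k+2\}$. The same parity-plus-Lipschitz reasoning that made the log-concavity swap valid forces $D\equiv 2k$ on the outer boundary of $C$, and one may then redefine $(\tilde f,\tilde g)$ on $C$ by $\tilde f':=\tilde g+2k$ and $\tilde g':=\tilde f-2k$, producing another pair of homomorphism height functions whose joint law is again a coupling of $\mu$ and $\mu'$ (by the DLR equations and the conditional uniformity of each marginal on $C$). Performing such swaps systematically for every $k$ and every finite excursion component, and then passing to a $\Ze^2$-ergodic weak limit, yields a coupling in which, for each $k$, the level set $\{v:D(v)\ge 2k\}$ is a.s.\ empty, a.s.\ equal to $\Z^2$, or composed solely of infinite connected components.

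The second ingredient is two-dimensional planar topology. Suppose, toward a contradiction, that $D$ is not a.s.\ constant. Then there exist integers $k<k'$ for which both $A:=\{D\le 2k\}$ and $B:=\{D\ge 2k'\}$ have positive density, and after the swap step each consists solely of infinite clusters. Because $A$ and $B$ are disjoint $\Ze^2$-ergodic random subsets of $\Z^2$ linked by the $1$-Lipschitz constraint on $D/2$, a Burton--Keane / Zhang-type planar uniqueness argument -- in the spirit of~\cite{Sheffield2005random} and~\cite[Theorem~3.1]{chandgotia2018delocalization} -- forces a contradiction, as there is no room in a planar, translation-invariant setting for two such ergodic infinite configurations separated by a Lipschitz difference. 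This planar step is the main obstacle in the argument and the place where the hypothesis $d=2$ is essential, consistent with the expected failure of the statement in $d\ge 3$; the cluster swap itself is mere bookkeeping.
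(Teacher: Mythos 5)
Your high-level strategy---cluster swapping for the difference $D=\tilde f-\tilde g$ followed by a planar non-coexistence argument for its level sets---is indeed the strategy of the paper, but the execution has a genuine gap at the swap step. The paper's swapping lemma (Lemma~\ref{lem:finite_cluster_swap}) is applied to $f,g$ sampled \emph{independently} from $\mu,\mu'$: only then is the conditional law of the \emph{pair} on a finite box, given the outside, uniform over all pairs of extensions, and only then does the involution argument show that the swapped pair has the same joint law. Your justification---``the DLR equations and the conditional uniformity of each marginal on $C$''---is insufficient for an arbitrary $\Ze^2$-ergodic coupling: conditional uniformity of each marginal says nothing about the joint conditional law, and the map $(\tilde f,\tilde g)\mapsto(\tilde g+2k,\tilde f-2k)$ on $C$ need not preserve either the joint law or the marginals. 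Moreover, your particular swap is not an involution (it sends a finite component of $\{D\ge 2k+2\}$ to a finite component of $\{D\le 2k-2\}$), and ``performing such swaps systematically for every $k$ and every finite excursion component, then passing to a $\Ze^2$-ergodic weak limit'' is not a well-defined construction; the asserted trichotomy for the level sets of $D$ does not follow from it. The paper avoids all of this by never leaving the independent coupling: it studies the percolations $\sigma^k:=\1_{f\ge g+2k}$ for independent $f,g$, and uses the swap only through Lemma~\ref{lem:stochastic domination} (no infinite cluster of $\{f<g+2k\}$ implies stochastic domination of the law of $g+2k$ by $\mu$) and Corollary~\ref{cor:coupling_to_get_extremality} (no infinite disagreement cluster implies the laws of $f$ and $g+2k$ coincide, which is precisely the deterministic coupling the theorem asks for).

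Two further missing inputs. First, the zero-one laws for the existence of infinite clusters of $\sigma^k$ and the planar non-coexistence statement~\eqref{eq:no coexistence} both rely on $\sigma^k$ being extremal and positively associated, which rests on the fact that $\Ze^2$-ergodic Gibbs measures are extremal (Lemma~\ref{lem:ergodic measures are extremal}); this nontrivial step does not appear in your sketch and cannot be invoked for an unspecified ergodic coupling. Second, you must rule out the degenerate alternative in which $\{f<g+2k\}$ has no infinite cluster for \emph{every} $k$, so that $\mu$ would stochastically dominate the law of $g+2k$ for all $k$, which is absurd; the paper's case analysis via the monotonicity of $k\mapsto\sigma^k$ and the zero-one laws handles this, whereas your step ``there exist $k<k'$ with both $\{D\le 2k\}$ and $\{D\ge 2k'\}$ of positive density'' simply assumes it away.
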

    Theorem~\ref{thm:no ergodic Gibbs measures} is derived from this statement with the following additional argument. Suppose, in order to obtain a contradiction, that $\mu$ is a $\Ze^2$-ergodic Gibbs measure. Let $f$ be sampled from $\mu$. Define a homomorphism height function $g$ on $\Z^2$ by
    \begin{equation}\label{eq:g_from_f}
      g(v) := f(-v + (1,0)) - 1.
    \end{equation}
    One checks in a straightforward way that the distribution of $g$ is also a $\Ze^2$-ergodic Gibbs measure, which we denote by $\mu'$. Thus, by Theorem~\ref{thm:uniqueness_up_to_additive_constant}, there exists an integer $k$ and a coupling of $\mu, \mu'$ such that, when sampling $(\tilde{f},\tilde{g})$ from this coupling, the equality \eqref{eq:f_equal_g_plus_2k} holds almost surely. Continuing, we observe that \eqref{eq:g_from_f} implies that
    \begin{equation*}
      f((0,0)) + f((1,0)) = g((1,0)) + g((0,0)) + 2.
    \end{equation*}
    Thus, the equality in distribution
    \begin{equation*}
      \tilde{f}((0,0)) + \tilde{f}((1,0)) \eqd \tilde{g}((0,0)) + \tilde{g}((1,0)) + 2
    \end{equation*}
    also holds.
    However, by \eqref{eq:f_equal_g_plus_2k},
    \begin{equation*}
      \tilde{f}((0,0)) + \tilde{f}((1,0)) = \tilde{g}((0,0)) + \tilde{g}((1,0)) + 4k.
    \end{equation*}
    This implies that $4k = 2$, which contradicts the fact that $k$ is an integer. The contradiction establishes Theorem~\ref{thm:no ergodic Gibbs measures}.

\subsection{Uniqueness of ergodic Gibbs measures}
\label{sec:uniqueness-of-ergodic-Gibbs}
In this section we discuss the main ideas in the proof of Theorem~\ref{thm:uniqueness_up_to_additive_constant} following~\cite[Theorem 3.1]{chandgotia2018delocalization}. The results in Section~\ref{sec:disagreement percolation and cluster swapping} and Section~\ref{sec:positive association and extremality} hold in any dimension $d\ge 1$ while the results of Section~\ref{sec:monotone sequence of percolation configurations} are restricted to dimension $d=2$.

\subsubsection{Disagreement percolation and cluster swapping}\label{sec:disagreement percolation and cluster swapping}
    In order to characterize all $\Ze^2$-ergodic Gibbs measures we need a method to compare two Gibbs measures. The main tool is the following lemma which appeared in~\cite{Sheffield2005random} and applies in all dimensions. For two Gibbs measures $\mu_1, \mu_2$ on homomorphism height functions, we say that $\mu_1$ \emph{stochastically dominates} $\mu_2$ if there exists a coupling of $\mu_1$ and $\mu_2$ such that when $(\tilde f, \tilde g)$ is sampled from the coupling (i.e., $\tilde{f}\sim \mu_1$ and $\tilde{g}\sim\mu_2$) then $\tilde{f}\ge \tilde{g}$ everywhere, almost surely.

    \begin{lemma}\label{lem:stochastic domination}
      Let $\mu_1, \mu_2$ be Gibbs measures and $f,g$ be \emph{independently} sampled from $\mu_1, \mu_2$, respectively. If
      \begin{equation}\label{eq:no_f_less_than_g_infinite_component}
        \P(\text{there is an infinite connected component of $\{f<g\}$}) = 0,
      \end{equation}
      then $\mu_1$ stochastically dominates $\mu_2$.
    \end{lemma}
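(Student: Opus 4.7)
My approach is the classical cluster-swapping argument. Starting from $(f,g)$ sampled independently from $\mu_1 \otimes \mu_2$, I will exchange $f$ and $g$ on every connected component of $\{f<g\}$; all such components are a.s.\ finite by hypothesis, so the operation is well defined. The result will be a coupling $(\tilde f, \tilde g) = (\max(f,g), \min(f,g))$ satisfying $\tilde f \ge \tilde g$ pointwise, which is the desired witness of stochastic domination provided the marginals come out right.

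The geometric fact making the swap legal is a parity observation. Since $f$ and $g$ are both homomorphism height functions, $(f-g)(v)$ is even at every vertex $v$, and $(f-g)(u) - (f-g)(v) \in \{-2,0,2\}$ whenever $u \sim v$. Consequently, if $v$ lies in a component of $\{f<g\}$ and $u$ is an adjacent vertex outside it, so that $(f-g)(v) \le -2$ and $(f-g)(u) \ge 0$, we are forced to have $(f-g)(u) = 0$ and $(f-g)(v) = -2$. Hence, on the external vertex boundary of every component $C$ of $\{f<g\}$, $f \equiv g$, and exchanging $f$ and $g$ on $C$ produces a pair of valid homomorphism height functions agreeing with the original outside $C$.

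The main step is to verify that this swap preserves both marginals, which I would do by a finite-volume approximation. For a large finite box $B$, let $C_B$ be the union of those components of $\{f<g\}$ that are entirely contained in $B$, and let $(\tilde f_B,\tilde g_B)$ be the partial swap on $C_B$. Condition on the $\sigma$-algebra generated by $(f,g)|_{B^c}$ together with the family of components of $\{f<g\}$ inside $B$ that touch $\partial B$; this makes $C_B$ measurable, while by the Gibbs property of $\mu_1$ and $\mu_2$ the remaining conditional distribution of $(f|_{C_B}, g|_{C_B})$ is a product of two uniform laws on the homomorphism extensions of $f|_{\partial C_B}$ and $g|_{\partial C_B}$ respectively. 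The parity observation forces these two boundary conditions to coincide, so the two uniform measures are equal, and swapping two i.i.d.\ samples is measure-preserving; hence $(\tilde f_B,\tilde g_B) \eqd (f,g)$. Letting $B \uparrow \Z^d$, the a.s.\ finiteness of every component gives $C_B \uparrow \{f<g\}$ a.s., so $(\tilde f_B,\tilde g_B) \to (\tilde f,\tilde g)$ pointwise and the marginal identities pass to the limit. The main obstacle I expect is the bookkeeping in this conditioning step: arranging a $\sigma$-algebra with respect to which $C_B$ is measurable while the interior of $C_B$ is still genuinely ``free'' under the Gibbs specification. Fixing the $\partial B$-touching components but leaving those strictly inside $B$ available for swapping is the natural choice and should make the argument go through.
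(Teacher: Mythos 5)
Your parity observation is correct and the finite-volume approximation is the right strategy, but the measure-preservation step fails. The operation you propose --- swapping $f$ and $g$ on every component of $\{f<g\}$, which amounts to $(f,g)\mapsto(\max(f,g),\min(f,g))$ componentwise --- is not an involution, indeed not injective: both $(f,g)$ and $(g,f)$ on a given disagreement component map to the same pair, and the image never lies back in $\{f<g\}$. So you are not ``swapping two i.i.d.\ samples''; you are swapping \emph{conditionally on which one is larger}, and that destroys the symmetry. Already for two i.i.d.\ fair $\pm1$ random variables $X,Y$, the law of $\min(X,Y)$ assigns probability $3/4$ to $-1$, not $1/2$, so $\tilde g_B$ does not have marginal $\mu_2$ and your $(\tilde f_B,\tilde g_B)$ is not a coupling of $\mu_1,\mu_2$. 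A secondary problem is the conditioning: knowing $(f,g)|_{B^c}$ together with the $\partial B$-touching components does \emph{not} make $C_B$ measurable (the components strictly inside $B$ depend on $(f,g)$ in the interior of $B$), and conditioning on enough to determine $C_B$ introduces constraints on $(f|_{C_B},g|_{C_B})$ --- chiefly that $f<g$ on $C_B$ --- which invalidate the ``product of two uniform laws'' claim.

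The fix, which is what the paper does via Lemma~\ref{lem:finite_cluster_swap}, is to swap $f$ and $g$ on all finite components of $\{f\neq g\}$, of both signs. That map \emph{is} an involution on pairs of homomorphisms extending given boundary data, so conditioning only on $(f,g)|_{\Lambda(n)^c}$ one sees that $(\tilde f,\tilde g)\eqd(f,g)$; in particular $\tilde g\sim\mu_2$. One then uses the \emph{asymmetric} coupling $(f,\tilde g)$ rather than $(\tilde f,\tilde g)$: on each finite component of $\{f\neq g\}$ one has $\tilde g=f$, and since by your parity observation each connected component of $\{f\neq g\}$ carries a constant sign of $f-g$, the hypothesis forces every infinite component of $\{f\neq g\}$ to be a component of $\{f>g\}$, on which $\tilde g=g<f$. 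Hence $f\ge\tilde g$ everywhere almost surely, with $f\sim\mu_1$ and $\tilde g\sim\mu_2$ as required.
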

    The following corollary already appeared in van den Berg~\cite{van1993uniqueness} in the context of Gibbs measures which are Markov random fields.
    \begin{corollary}\label{cor:coupling_to_get_extremality}
      Let $\mu_1, \mu_2$ be Gibbs measures and $f,g$ be \emph{independently} sampled from $\mu_1, \mu_2$, respectively. If
      \begin{equation}\label{eq:no_f_different_from_g_infinite_component}
        \P(\text{there is an infinite connected component of $\{f\neq g\}$}) = 0,
      \end{equation}
      then $\mu_1=\mu_2$ and the measures are \emph{extremal}.
    \end{corollary}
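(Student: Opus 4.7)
The plan is to reduce both conclusions to Lemma~\ref{lem:stochastic domination} via the elementary observation that $\{f<g\} \subset \{f \neq g\}$. Any connected component of $\{f<g\}$ is, in particular, a connected subset of $\{f \neq g\}$, and is therefore contained in a (possibly larger) connected component of $\{f \neq g\}$; so an infinite $\{f<g\}$-component would produce an infinite $\{f \neq g\}$-component, and the analogous statement holds for $\{f>g\}$. Consequently hypothesis~\eqref{eq:no_f_different_from_g_infinite_component} implies that, under $\mu_1 \otimes \mu_2$, both $\{f<g\}$ and $\{f>g\}$ have no infinite component almost surely.

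Next I would apply Lemma~\ref{lem:stochastic domination} twice: once to conclude that $\mu_1$ stochastically dominates $\mu_2$, and once, with the roles of $\mu_1$ and $\mu_2$ swapped, to conclude that $\mu_2$ stochastically dominates $\mu_1$. Since $\Z^{\Z^d}$ carries the pointwise partial order and its product $\sigma$-algebra is generated by increasing cylinder events (for instance, events of the form $\{f(v_1)\ge a_1,\ldots,f(v_n)\ge a_n\}$), mutual stochastic domination forces $\mu_1 = \mu_2 =: \mu$.

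For extremality I would argue by contradiction. Suppose $A$ is a tail event with $0 < \mu(A) < 1$, and set $\mu_A := \mu(\cdot \mid A)$. Because $A$ lies in the tail $\sigma$-algebra, it is $\sigma(f|_{V^c})$-measurable for every finite $V \subset \Z^d$, and hence the conditional law of $f|_V$ given $f|_{V^c}$ is unaffected by further conditioning on $A$. Thus $\mu_A$ inherits the uniform specification and is itself a Gibbs measure on homomorphism height functions. The product law $\mu \otimes \mu_A$ is absolutely continuous with respect to $\mu \otimes \mu$ with Radon--Nikodym derivative bounded by $1/\mu(A)$, so the zero-probability event in~\eqref{eq:no_f_different_from_g_infinite_component} remains of probability zero under $\mu \otimes \mu_A$. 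Applying the first part of the corollary---already established---to the pair $(\mu, \mu_A)$ yields $\mu = \mu_A$, whence $\mu(A^c) = \mu_A(A^c) = 0$, contradicting $\mu(A)<1$. Therefore $\mu$ is tail-trivial, i.e., extremal.

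The only substantive step is the observation in the first paragraph that converts the hypothesis on $\{f \neq g\}$ into the form required by Lemma~\ref{lem:stochastic domination}; the rest (mutual stochastic domination implying measure equality, and the conditional-tail-event trick for extremality) is standard. I do not anticipate a genuine obstacle, since the heavy lifting has already been done inside Lemma~\ref{lem:stochastic domination}.
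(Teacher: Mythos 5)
Your proposal is correct and follows essentially the same route as the paper: both conclusions come from applying Lemma~\ref{lem:stochastic domination} in both directions (using $\{f<g\},\{f>g\}\subset\{f\neq g\}$), and extremality is deduced by transferring the null hypothesis~\eqref{eq:no_f_different_from_g_infinite_component} to a second pair of Gibbs measures via absolute continuity with respect to $\mu\otimes\mu$. The only cosmetic difference is in the extremality step, where you condition $\mu$ on a tail event $A$ and compare $\mu$ with $\mu(\cdot\mid A)$, whereas the paper writes a non-extremal $\mu$ as $\tfrac{1}{2}(\nu_1+\nu_2)$ for distinct Gibbs measures $\nu_1,\nu_2$ and compares those; the two devices are interchangeable here.
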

\begin{proof}
Lemma~\ref{lem:stochastic domination} implies that both $\mu_1$ stochastically dominates $\mu_2$ and $\mu_2$ stochastically dominates $\mu_1$, whence $\mu_1 = \mu_2$. We are left to prove that $\mu=\mu_1=\mu_2$ is extremal. Otherwise $\mu=\frac{1}{2}(\nu_1+\nu_2)$ for distinct Gibbs measures $\nu_1$ and $\nu_2$. Thereby with positive probability $(f,g)$ has the distribution of independent samples from $\nu_1$ and $\nu_2$ respectively. As~\eqref{eq:no_f_different_from_g_infinite_component} holds almost surely for $(f,g)$ it follows, with the same argument as in the beginning of the proof, that $\nu_1 = \nu_2$, a contradiction.
\end{proof}

    The proof of Lemma~\ref{lem:stochastic domination} is based on the idea of swapping finite connected components of vertices on which $f\neq g$ (disagreement clusters). A generalization of this method is used in~\cite{Sheffield2005random} to study random surfaces with nearest-neighbor convex potentials and is termed there \emph{cluster swapping}. The interested reader is referred to~\cite[Section 2]{cohen2017rarity} for a survey of related ideas.
    \begin{lemma}\label{lem:finite_cluster_swap}
      Let $\mu_1, \mu_2$ be Gibbs measures and $f,g$ be \emph{independently} sampled from $\mu_1, \mu_2$, respectively. Define a new pair of homomorphism height functions $(\tilde{f}, \tilde{g})$ as follows:
      \begin{equation}\label{eq:finite_cluster_swap_def}
        (\tilde{f},\tilde{g})(v) = \begin{cases}
          (g,f)(v)& \text{there is a \emph{finite} connected component of $\{f\neq g\}$ containing $v$},\\
          (f,g)(v)& \text{otherwise}.
        \end{cases}
      \end{equation}
      Then $(\tilde{f}, \tilde{g})$ has the same distribution as $(f,g)$.
    \end{lemma}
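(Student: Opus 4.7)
My plan is to localize the swap to finite regions, verify distribution-preservation there via the Gibbs property, and then pass to the limit. For a finite $W \subseteq \Z^d$, let $\Phi^W$ denote the map that swaps $(f, g)$ on the random set $U_W \subseteq W$ consisting of all connected components of $\{f \neq g\}$ that are entirely contained in $W$. Any finite component of $\{f \neq g\}$ eventually lies in $W$ as $W \uparrow \Z^d$, while infinite components are never contained in $W$; hence for every vertex $v$ the value $\Phi^W(f, g)(v)$ is eventually constant and equal to $\Phi(f, g)(v)$, where $\Phi$ denotes the map from~\eqref{eq:finite_cluster_swap_def}. It therefore suffices to prove $\Phi^W(f, g) \eqd (f, g)$ for each finite $W$, after which passing to the limit in the finite-dimensional marginals yields the claim.

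For the finite-$W$ claim, I would condition on $(f|_{W^c}, g|_{W^c})$. By the Gibbs property of $\mu_1$ and $\mu_2$ combined with the independence of $f$ and $g$, the conditional law of $(f|_W, g|_W)$ is the product of two independent uniform distributions on the set $\mathcal{H}_f$ of homomorphism extensions of $f|_{\partial_{\mathrm{ext}} W}$ to $W$ and the analogous set $\mathcal{H}_g$. As a set-theoretic operation on pairs of integer-valued functions, the swap on $U_W$ is an involution: the underlying disagreement set $\{f \neq g\}$ is preserved under swapping, so $U_W$ itself is preserved, and applying $\Phi^W$ twice recovers the original pair. Once one verifies that $\Phi^W$ maps $\mathcal{H}_f \times \mathcal{H}_g$ into itself (see next paragraph), $\Phi^W$ becomes a bijection of the finite set $\mathcal{H}_f \times \mathcal{H}_g$ equipped with the uniform measure, and therefore preserves that measure. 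Deconditioning gives $\Phi^W(f, g) \eqd (f, g)$.

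The remaining step, and the main obstacle, is the geometric claim that the swapped pair $(\tilde f, \tilde g)$ consists of homomorphism height functions with the original boundary values on $\partial_{\mathrm{ext}} W$. The crucial observation is that every $\Z^d$-neighbor of $U_W$ lies in $\{f = g\}$: by maximality, any neighbor of a connected component $C \subseteq W$ of $\{f \neq g\}$ that does not lie in $C$ must belong to $\{f = g\}$, whether that neighbor is inside $W$ or on $\partial_{\mathrm{ext}} W$ (this is where one uses $C \subseteq W$). Consequently, along every edge with one endpoint in $U_W$ and the other outside, the swap is invisible; inside $U_W$ the new value $\tilde f = g$ is itself a homomorphism; and on $\partial_{\mathrm{ext}} W$ nothing is modified. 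Piecing these three regions together, $\tilde f$ is a homomorphism extension of $f|_{\partial_{\mathrm{ext}} W}$ to $W$, and symmetrically for $\tilde g$. This closes the loop and establishes the lemma.
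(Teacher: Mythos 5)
Your proof is correct and follows essentially the same route as the paper: localize the swap to components contained in a finite box, use the Gibbs property plus independence to reduce to a uniform measure on pairs of extensions, observe that the swap is an involution (hence a measure-preserving bijection), and pass to the pointwise limit. The only difference is that you spell out the geometric verification that the swapped pair consists of valid homomorphism extensions (neighbors of swapped components lie in $\{f=g\}$), a step the paper labels as straightforward.
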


    \begin{proof}[Proof of Lemma~\ref{lem:finite_cluster_swap}]
    Define $(\tilde{f}_n, \tilde{g}_n)$ for $n\ge 1$ by
    \begin{equation}\label{eq:finite cluster swap in finite volume def}
        (\tilde{f}_n,\tilde{g}_n)(v) = \begin{cases}
          (g,f)(v)& \substack{\text{\normalsize there is a finite connected component}\\\text{\normalsize $C\subset\Lambda(n)$ of $\{f\neq g\}$ containing $v$}},\\
          (f,g)(v)& \text{otherwise},
        \end{cases}
      \end{equation}
      with $\Lambda(n)$ defined in~\eqref{eq:Lambda L def}.
      As $(\tilde{f}_n, \tilde{g}_n)$ converges to $(\tilde{f}, \tilde{g})$ pointwise as $n\to\infty$, it suffices to show that $(\tilde{f}_n, \tilde{g}_n)$ has the same distribution as $(f,g)$ for each $n$.

      Fix $n$. By definition, $(\tilde{f}_n,\tilde{g}_n)_{\Lambda(n)^c} = (f,g)_{\Lambda(n)^c}$. In addition, conditioned on $(f,g)_{\Lambda(n)^c}$, the distribution of $(f,g)_{\Lambda(n)}$ is uniform over all pairs of homomorphism height functions extending the boundary conditions, as $f$ and $g$ are sampled independently from Gibbs measures. It thus suffices to prove that this latter uniformity statement holds also for $(\tilde{f}_n, \tilde{g}_n)$. This now follows from the straightforward fact that conditioned on $(f,g)_{\Lambda(n)^c}$ (which equals $(\tilde{f}_n,\tilde{g}_n)_{\Lambda(n)^c}$), the definition \eqref{eq:finite cluster swap in finite volume def} yields a bijection (in fact, an involution) between the homomorphism pairs $(f,g)_{\Lambda(n)}$ and the homomorphism pairs $(\tilde{f}_n, \tilde{g}_n)_{\Lambda(n)}$ which extend the boundary conditions.
    \end{proof}

    \begin{proof}[Proof of Lemma~\ref{lem:stochastic domination}]
    Let $(f,g)$ be as in the lemma and $(\tilde{f}, \tilde{g})$ be defined by~\eqref{eq:finite_cluster_swap_def} so that $(f,g)$ and $(\tilde{f}, \tilde{g})$ have the same distribution. In particular, $g$ has the same distribution as $\tilde{g}$. Thus, $f$ and $\tilde{g}$ form a coupling of $\mu_1$ and $\mu_2$ which, by its definition and the assumption~\eqref{eq:no_f_less_than_g_infinite_component} satisfies that $f\ge\tilde{g}$ everywhere, almost surely.
    \end{proof}

\subsubsection{Positive association and extremality of ergodic Gibbs measures}\label{sec:positive association and extremality}

We place a \emph{partial order} on functions $f:\Z^d\to\R$ by saying that $f_1\succeq f_2$ if $f_1(v)\ge f_2(v)$ for all $v$. A mapping $T$ from such functions to $\R$ is called \emph{increasing} if $T(f_1)\ge T(f_2)$ whenever $f_1\succeq f_2$. A measure $\mu$ on functions $f:\Z^d\to\R$ is said to be \emph{positively associated} if when $f$ is sampled from the measure and $T_1,T_2$ are bounded, measurable and increasing then $\E(T_1(f) T_2(f))\ge \E(T_1(f))\E(T_2(f))$.

The well-known FKG inequality~\cite{fortuin1971correlation} immediately implies the following:
Let $\Lambda\subset\Z^d$ be finite and $\tau:\intB\Lambda\to\Z$ be such that there exist homomorphism height functions on $\Lambda$ which equal $\tau$ on $\intB\Lambda$. Then the uniform measure on these homomorphism height functions is positively associated.

Unfortunately, it does not follow from the above fact that all Gibbs measures are also positively associated (see~\cite[Example 6.64]{friedli2017statistical} for an example due to Miyamoto in the context of the Ising model). Still, one may deduce that all \emph{extremal} Gibbs measures are positively associated. This makes the following fact valuable.
\begin{lemma}\label{lem:ergodic measures are extremal}
  In every dimension $d$: Every $\Ze^d$-ergodic Gibbs measure is extremal.
\end{lemma}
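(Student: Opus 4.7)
The plan is to use the Choquet extremal decomposition of $\mu$ in the simplex $\mathcal{G}$ of all Gibbs measures. Since $\mathcal{G}$ is a Choquet simplex, there is a unique probability measure $\lambda$ on the Borel set $\mathcal{G}_{\text{ex}}$ of extremal Gibbs measures with $\mu = \int_{\mathcal{G}_{\text{ex}}} \nu \, d\lambda(\nu)$. Extremality of $\mu$ amounts to the statement that $\lambda$ is a Dirac mass.

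I would first verify that $\lambda$ is invariant under the natural $\Ze^d$-action $\nu \mapsto T_x\nu$ on $\mathcal{G}_{\text{ex}}$. Since $T_x$ maps $\mathcal{G}_{\text{ex}}$ into itself and $\mu = T_x\mu$, applying $T_x$ term-wise to the decomposition produces another representing measure for $\mu$; by uniqueness, the pushforward $(T_x)_*\lambda$ equals $\lambda$. Next, I would show that $\lambda$ is itself $\Ze^d$-ergodic: if $\lambda = \beta\lambda_1 + (1-\beta)\lambda_2$ were a nontrivial $\Ze^d$-invariant splitting, integration would give a corresponding nontrivial splitting of $\mu$ into distinct $\Ze^d$-invariant Gibbs measures $\mu_i := \int \nu\, d\lambda_i(\nu)$, contradicting the $\Ze^d$-ergodicity of $\mu$.

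The crucial remaining step is to deduce that a $\Ze^d$-invariant, $\Ze^d$-ergodic probability measure $\lambda$ on $\mathcal{G}_{\text{ex}}$ must be a Dirac mass. In abstract generality this need not hold: one could imagine $\lambda$ supported on a genuine $\Ze^d$-orbit or on a measurable union of orbits. Here the specific structure of the homomorphism height function model is essential: the $\Z$-valued heights give rise to a non-compact ``height direction'' which the $\Ze^d$-action does not fix, so one expects that if $\nu \in \mathcal{G}_{\text{ex}}$ is not itself $\Ze^d$-invariant then its $\Ze^d$-orbit is infinite (translating typically shifts the height distribution in a non-recoverable manner). An infinite $\Ze^d$-orbit carries no $\Ze^d$-invariant probability measure, so $\lambda$ must be supported on the set of $\Ze^d$-invariant extremal measures, on which the $\Ze^d$-action is trivial and $\Ze^d$-ergodicity of $\lambda$ forces it to be a Dirac mass.

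The main obstacle is this last step: ruling out non-$\Ze^d$-invariant extremal Gibbs measures with finite $\Ze^d$-orbit. This is a genuinely model-specific phenomenon: in the analogous 3-coloring model (where heights are taken modulo 3), the frozen colorings $v \mapsto v_1 + \ldots + v_d \pmod 3$ have finite $\Ze^d$-orbit and their uniform mixture is a $\Ze^d$-ergodic Gibbs measure which is not extremal, so the corresponding statement fails for 3-colorings; passing to height functions uncompactifies the orbit and removes this obstruction. Making the infinite-orbit argument rigorous will most likely use a cluster-swapping argument in the spirit of Lemmas~\ref{lem:stochastic domination} and~\ref{lem:finite_cluster_swap}, comparing $\nu$ with $T_x\nu$ via the disagreement structure between independent samples in order to show that any two distinct translates differ on an infinite connected component of disagreements.
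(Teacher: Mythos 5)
Your first two steps are sound: by uniqueness of the Choquet representation over extremal Gibbs measures, the decomposition measure $\lambda$ is indeed invariant under the induced $\Ze^d$-action, and its ergodicity follows from that of $\mu$ as you describe. Your observation that the statement fails for $3$-colorings because of the finite orbit of the frozen coloring is also correct and is exactly the right thing to worry about. The proof breaks down at the final step, in two distinct places. First, even granting that every non-$\Ze^d$-invariant extremal Gibbs measure has an infinite orbit, it does \emph{not} follow that the invariant measure $\lambda$ is supported on the fixed points of the action: an ergodic invariant measure can be non-atomic, assigning zero mass to every individual orbit while being carried entirely by infinite orbits (a Bernoulli shift has essentially no periodic points, yet ergodic invariant measures abound). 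The inference ``infinite orbits carry no invariant probability measure, hence $\lambda$ lives on the fixed points'' is valid only for atomic $\lambda$, and nothing in your argument forces $\lambda$ to be atomic --- ruling out a continuous $\lambda$ is part of what the lemma asserts. Second, the infinite-orbit claim itself is left unproved, and the route you suggest does not reach it: Corollary~\ref{cor:coupling_to_get_extremality} (in contrapositive) shows that if $\nu\neq T_x\nu$ then independent samples from the two measures disagree on an infinite cluster with positive probability, but this says nothing about whether, say, $T_{2x}\nu=\nu$, i.e., it gives no lower bound on the orbit size. In effect the whole difficulty of the lemma has been relocated into a step that is then settled by a heuristic about the ``height direction'' plus an invalid abstract deduction.

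For comparison, the argument sketched in the paper avoids the Choquet decomposition entirely. One samples $f,g$ independently from the \emph{same} ergodic measure $\mu$ and shows that $\{f\neq g\}$ has no infinite cluster, which gives extremality directly via Corollary~\ref{cor:coupling_to_get_extremality}. The infinite clusters of $\{f>g\}$ and $\{f<g\}$ are excluded by combining Burton--Keane uniqueness of the infinite cluster, an infinite-volume cluster swapping (in the spirit of Lemma~\ref{lem:finite_cluster_swap}) showing the two cannot coexist, and finally a density argument: a coupling with $\tilde f\ge\tilde g$ everywhere and $\tilde f>\tilde g$ on a set of positive density contradicts the fact that $\tilde f$ and $\tilde g$ have the same ergodic law and hence equal densities of each level set. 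That last step is where the ordered, $\Z$-valued structure of the heights enters quantitatively --- the same structure your heuristic gestures at. If you wish to salvage your decomposition approach, you would need to prove both that $\lambda$ is atomic and that non-fixed atoms have infinite orbits, and it is hard to see how to do either without essentially the percolation and coupling input above.
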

We do not provide a proof of the lemma here (see~\cite[Section 5]{chandgotia2018delocalization}) and content ourselves with a description of the main steps.

Let $f,g$ be independently sampled from the same $\Ze^d$-ergodic Gibbs measure. Set $\sigma^+:=\{v\in\Z^d\colon f(v)>g(v)\}$, $\sigma^-:=\{v\in\Z^d\colon f(v)<g(v)\}$ and let $E^+$ and $E^-$ be the events that $\sigma^+$ and $\sigma^-$ have an infinite connected component, respectively. By Corollary~\ref{cor:coupling_to_get_extremality}, it suffices to prove that
\begin{equation*}
\P(\text{there is an infinite connected component of $\{f\neq g\}$}) = 0,
\end{equation*}
which is itself implied by showing that $\P(E^+) = \P(E^-) = 0$.

As the first step, a standard theorem in percolation theory, applicable to translation-invariant percolation measures on $\Z^d$ (more generally, on amenable graphs) satisfying a finite energy condition (see~\cite{Burton1989density}), is that a percolation configuration can have at most one infinite connected component, almost surely. The theorem is applicable to $\sigma^+$ and $\sigma^-$ (strictly speaking, these percolation configurations do not satisfy the finite energy condition, but a suitable replacment may be devised, see~\cite[Section 4.4]{chandgotia2018delocalization}).

As a second step, it is shown that $\P(E^+\cap E^-)=0$. Indeed, on the event $E^+ \cap E^-$, one may apply a cluster swapping operation, of the same nature as in the previous section, to swap $f$ and $g$ on the infinite connected component of $\sigma^-$. Swapping on an infinite connected component no longer needs to preserve the joint distribution of $(f,g)$, but it may be shown that it preserves their joint Gibbs property, i.e., the fact that they are sampled from a Gibbs measure of the product specification. After the swapping operation, the percolation configuration where $f>g$ has two distinct infinite connected components, a contradiction to the first step.

Lastly, assume in order to get a contradiction that $\P(E^+)>0$ (the case that $\P(E^-)>0$ is similar), so that, by the previous step, $\P(E^+\setminus E^-)>0$. Let $C^+$ be the infinite connected component of $\sigma^+$ and note that $C^+$ has positive density, almost surely on the event $E^+$, by the translation invariance of $\sigma^+$. It follows from the proof of Lemma~\ref{lem:finite_cluster_swap} that there is a coupling $(\tilde{f},\tilde{g})$ of $f,g$ such that, on the event $(E^-)^c$, $\tilde{f}\ge \tilde{g}$ everywhere and $\tilde{f}>\tilde{g}$ on $C^+$. In particular, there is positive probability that $\tilde{f}\ge \tilde{g}$ everywhere, with a strict inequality holding on a positive density set. However, this contradicts the fact that $\tilde{f}$ and $\tilde{g}$ are sampled from the same $\Ze^d$-ergodic distribution (since for each integer $k$, the densities of the sets where $\tilde{f}=k$ and where $\tilde{g}=k$ are almost surely equal).

\subsubsection{A monotone sequence of percolation configurations}\label{sec:monotone sequence of percolation configurations}
    We describe the remaining ideas in the proof of Theorem~\ref{thm:uniqueness_up_to_additive_constant} following~\cite[Section 6]{chandgotia2018delocalization}).

    Fix the dimension $d=2$ throughout the section. Let $f,g$ be independent samples from $\Ze^2$-ergodic Gibbs measures $\mu, \mu'$. Our goal is to show that there is an integer $k_0$ such that the distribution of $f$ equals the distribution of $g + 2k_0$.

    Define the sequence of percolation configurations $(\sigma^k)$, $k$ integer, by
    \begin{equation*}
      \sigma^k(v):=\1_{f(v)\ge g(v) + 2k}.
    \end{equation*}
    By our definitions, $\sigma^k$ is $\Ze^2$-translation-invariant for every $k$ and $\sigma^k$ decreases with $k$. Moreover, Lemma~\ref{lem:ergodic measures are extremal} implies that $\mu$ and $\mu'$ are extremal and thus positively associated, which implies that, for every $k$, $\sigma^k$ is also positively associated and extremal (in the sense that every event which is measurable with respect to $\sigma^k$ and invariant under changing finitely many of the values of $\sigma^k$ has probability zero or one).

    For integer $k$ and $s\in\{0,1\}$ set $E_k^s$ to be the event that there is an infinite connected component on which $\sigma^k = s$. The above properties imply that for each $k$,
    \begin{align}
      &\P(E_k^0),\, \P(E_k^1)\in \{0,1\},\label{eq:E_+_E_-_zero_one}\\
      &\P(E_{k+1}^0)\ge \P(E_k^0)\quad\text{and}\quad \P(E_{k+1}^1)\le \P(E_k^1).\label{eq:E_+_E_-_monotonocity}
    \end{align}
    To this we add a fact, which relies on the planarity of $\Z^2$, stating that for each $k$,
    \begin{equation}
      \P(E_k^0\cap E_k^1) = 0.\label{eq:no coexistence}
    \end{equation}
    This fact is a consequence of the invariance of $\sigma^k$, its positive association and the uniqueness of infinite connected components discussed in Section~\ref{sec:positive association and extremality}. General results of this kind are provided in~\cite[Theorem 9.3.1 and Corollary 9.4.6]{Sheffield2005random} or~\cite[Theorem 1.5]{duminil2019sharp} though in our case a simpler argument of Zhang which utilizes additional symmetries of $\sigma^k$ may be used~\cite[Theorem 14.3]{Haggstrom2006uniqueness}.

    Combining the relations~\eqref{eq:E_+_E_-_zero_one}, \eqref{eq:E_+_E_-_monotonocity} and~\eqref{eq:no coexistence}, and exchanging the roles of $f$ and $g$ if necessary, we see that one of the following cases must occur:
    \begin{enumerate}
      \item There exists an integer $k_0$ such that $\P(E^1_{k_0+1})=0$ and $\P(E^0_{k_0}) = 0$.
      \item For all integer $k$, $\P(E^0_k) = 0$.
    \end{enumerate}
    Note, however, that if $\P(E^0_k) = 0$ for some $k$ then the distribution of $f$ stochastically dominates the distribution of $g+2k$ by Lemma~\ref{lem:stochastic domination}. Thus the second case implies that $f$ stochastically dominates $g+2k$ for all integer $k$, which cannot occur. Suppose then that the first case occurs for some integer $k_0$. Applying Corollary~\ref{cor:coupling_to_get_extremality} shows that the distribution of $f$ equals the distribution of $g+2k_0$, completing the proof of Theorem~\ref{thm:uniqueness_up_to_additive_constant}.

\section{Lecture 3 -- Long-range order}
Recall the three types of behavior highlighted in Section~\ref{sec:concrete questions} for the quantity~\eqref{eq:probability of equal corners}. In the first lecture we have proved that uniformly sampled proper $q$-colorings of $\Z^d$ are disordered when $q$ is large compared with $d$. In this lecture we study an opposite regime, in which $q$ is small compared with $d$, and discuss phenomena of long-range order.

The technique used to establish the disordered regime (Dobrushin's uniqueness condition) applies to many probabilistic models. The techniques used in the second lecture to discuss criticality are more specific. The techniques of this lecture, though described here for the specific case of proper $q$-colorings, again admit extensions to a wide class of models (see~\cite{peled2017condition,peledspinka2018spin}).

\subsection{Long-range order}\label{sec:long-range order}
Proper $2$-colorings exhibit long-range order in all dimensions. Can long-range order occur for any higher value of $q$? As Dobrushin's uniqueness condition, Theorem~\ref{thm:Dobrushin uniqueness}, applies when $q>4d$, we see that the relevant parameter range for long-range order is $q$ small compared with $d$. In ferromagnetic systems like the Ising model, the system orders by setting most spins to the same state (see Section~\ref{sec:Peierls argument}). What kind of ordered structure can arise here?

In estimating the number of proper $q$-colorings of a box $\Lambda\subset\Z^d$ the following argument may be used. Partition the $q$ colors into two subsets $A,B$ and consider the family of colorings obtained by coloring sites in the even sublattice with colors from $A$ and sites in the odd sublattice with colors from $B$. When $\Lambda$ has an equal number of even and odd sites this gives $(|A|\cdot |B|)^{|\Lambda|/2}$ colorings, and this quantity is maximized when $\{|A|,|B|\}=\{\lfloor \frac{q}{2}\rfloor, \lceil \frac{q}{2}\rceil\}$. Certainly most colorings are not obtained this way, but could it be that, when $q$ is small compared with $d$, most colorings coincide with such a ``pure $(A,B)$-coloring'' at \emph{most vertices}? This is the idea behind the following results which are proved in~\cite{peled2018rigidity}. The idea is formalized in two ways: In finite volume by prescribing suitable boundary conditions, and in infinite volume by describing maximal-entropy invariant Gibbs measures.

We state our first result following required notation. A \emph{pattern} is a pair $(A,B)$ of disjoint subsets of $\{1,\ldots, q\}$ (we stress that $(A,B)$ and $(B,A)$ are distinct patterns). It is called \emph{dominant} if $\{|A|,|B|\}=\left\{\lfloor \tfrac{q}{2}\rfloor, \lceil \tfrac{q}{2}\rceil\right\}$.
A \emph{domain} is a non-empty finite $\Lambda \subset \Z^d$ such that both $\Lambda$ and $\Z^d\setminus\Lambda$ are connected. Its \emph{internal vertex-boundary}, denoted $\intB \Lambda$, is the set of vertices in $\Lambda$ adjacent to a vertex outside $\Lambda$ (see~\eqref{eq:internal vertex boundary}).
Given a proper $q$-coloring $f$, we say that
\begin{align*}
&\text{a vertex $v$ is \emph{in the $(A,B)$-pattern} if}\\
&\text{either $v$ is even and $f(v) \in A$, or $v$ is odd and $f(v) \in B$}.
\end{align*}
We also say that a set of vertices is in the $(A,B)$-pattern if all its elements are such.

\begin{theorem}\label{thm:long-range-order}
There exist $C,c>0$ such that the following holds for any number of colors $q \ge 3$ and any dimension
\begin{equation}\label{eq:dim-assump}
d \ge Cq^{10} \log^2 q.
\end{equation}
Let $\Lambda\subset\Z^d$ be a domain and let $(A,B)$ be a dominant pattern. Let $\Pr_{\Lambda,(A,B)}$ be the uniform measure on proper $q$-colorings $f$ of $\Lambda$ satisfying that $\intB \Lambda$ is in the $(A,B)$-pattern. Then
\begin{equation}\label{eq:main_thm_bound}
 \Pr_{\Lambda,(A,B)}\big(v\text{ is not in the $(A,B)$-pattern}\big) \le \exp\left(-\frac{cd}{q^3(q+\log d)}\right),\qquad v\in\Lambda.
\end{equation}
\end{theorem}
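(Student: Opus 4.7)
The plan is a Peierls-style contour argument specialized to proper $q$-colorings in high dimension, following the scheme developed in~\cite{peled2018rigidity}. For a proper coloring $f$, call a vertex $u$ \emph{bad} if $u$ is not in the $(A,B)$-pattern. Since the boundary $\intB\Lambda$ is pinned in the pattern, any bad vertex $v$ lies in a finite ``defect cluster'' $K(v)$ of bad vertices surrounded by an interface of pattern-compliant vertices. I would define a \emph{contour} $\gamma$ around $v$ as the inner vertex boundary of the $r$-thickening of the connected bad cluster of $v$, with thickness $r = O(\log q)$ chosen large enough to support the shift map below.

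The core step is an injection-plus-entropy argument. Given $f$ with contour $\gamma$ around $v$, I would construct a map $\Phi \colon f \mapsto (f', \mathrm{aux})$ where $f'$ is a proper coloring in which $K(v)$ has been re-colored to lie entirely in the $(A,B)$-pattern, and $\mathrm{aux}$ is a combinatorial record of entropy at most $C|\gamma|\log q$. The re-coloring proceeds vertex-by-vertex in a canonical order, at each step using: (i) the thickness-$r$ buffer, which controls the number of already-fixed neighbors; and (ii) the dominance of $(A,B)$, which provides $|A|\cdot|B| = \lfloor q/2\rfloor \lceil q/2\rceil$ candidate colors at each even/odd site to preserve properness. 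The injectivity of $\Phi$, together with a counting gain of order $(|A|\cdot|B|/q)^{\Omega(|\gamma|)}$ coming from the abundance of dominant-pattern colorings on the buffer zone, then yields $\Pr_{\Lambda,(A,B)}(\gamma\text{ is the contour of }v) \le \exp(-c|\gamma|/q^{O(1)})$ for some $c>0$.

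To conclude, I would sum over contours. Any contour separating $v$ from $\intB\Lambda$ in $\Z^d$ has $|\gamma|\ge 2d$ by the immediate vertex-cut bound, and more generally the $\Z^d$ isoperimetric inequality gives $|\gamma| = \Omega(d\cdot |K(v)|^{(d-1)/d})$. Combined with the standard lattice-animal bound $\#\{\gamma:|\gamma|=n,\,v\in\gamma\}\le (Cd)^n$, a union bound produces $\Pr_{\Lambda,(A,B)}(v\text{ is bad}) \le \sum_{n\ge 2d} (Cd)^n \exp(-cn/q^{O(1)})$, which, once the polynomial-in-$q$ losses are absorbed by the hypothesis $d \ge Cq^{10}\log^3 q$, yields the stated bound $\exp(-d/(q^3(q+\log d)))$.

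The main obstacle is calibrating the shift map so that it simultaneously (i) preserves the proper coloring constraint at every intermediate step of the re-coloring---a subtle issue because proper $q$-colorings are a hard-constraint model that rules out naive color flips---and (ii) provides a per-vertex entropy gain large enough to dominate both the $\log(Cd)$ factor from the lattice-animal count and the $O(\log q)$ cost of encoding $\mathrm{aux}$. The interplay between the buffer thickness $r \sim \log q$, the dominance of the pattern $(A,B)$, and the lattice-animal count is precisely what dictates the polynomial-in-$q$ threshold on $d$ appearing in the theorem.
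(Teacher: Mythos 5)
There is a genuine and fatal gap at the contour-summation step. Your per-contour bound $\Pr(\gamma\text{ is the contour of }v)\le\exp(-c|\gamma|/q^{O(1)})$ is essentially the best one can hope for: when the defect is a droplet ordered according to a \emph{different} dominant pattern, entropy is lost only along the interface and at rate $O(1/q)$ per interface vertex (the sharp toy computation gives $\left(\tfrac{q-1}{q+1}\right)^{|\partial U|/2d}$, i.e.\ about $\exp(-c/(qd))$ per boundary \emph{edge}), so your claimed gain of $(|A|\cdot|B|/q)^{\Omega(|\gamma|)}\approx (q/4)^{|\gamma|}$ is far too optimistic --- a bad region is not forced to be locally low-entropy, only its domain wall is, and that wall's per-vertex cost is independent of $d$ and tends to $0$ as $q\to\infty$. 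Consequently the union bound $\sum_{n}(Cd)^n\exp(-cn/q^{O(1)})$ diverges: the lattice-animal count costs $\log(Cd)$ per contour vertex while the probability bound gains only $c/q^{O(1)}$ per contour vertex, and no calibration of the buffer thickness $r$ can close a gap that grows with $d$. (This is exactly the obstruction discussed around \eqref{eq:bounds on the number of contours} and \eqref{eq:single droplet in ordered pattern odd q}, and is analogous to the Aizenman--Bricmont--Lebowitz observation that the naive Peierls argument cannot reach the true threshold in high dimensions.)

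The missing ideas are structural. First, the ordered region of each dominant pattern must be defined (via the colors of \emph{neighbors}, with separate treatment of overlap and genuinely disordered regions) so that it is an even or odd set; its boundary is then an \emph{odd cutset}, and the number of odd cutsets of length $\ell$ through a vertex is only about $2^{\ell/2d}$ rather than $\exp(C\ell\log d/d)$. Second, even this reduced count is too large for any $q\ge3$, so one must coarse-grain: sum not over cutsets but over Sapozhenko-type \emph{approximations} of them, of which there are only $\exp\big(C\ell(\tfrac{\log d}{d})^{3/2}\big)$ (Proposition~\ref{lem:family-of-approx}), and extend the repair/entropy estimate to the case where only the approximation, not the breakup itself, is specified. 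It is the comparison $\exp\big(C\ell(\tfrac{\log d}{d})^{3/2}\big)\cdot\exp(-c\ell/(qd))\ll1$ that produces the polynomial threshold \eqref{eq:dim-assump}; your scheme, as written, cannot produce any threshold at all.
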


It is natural to wonder whether other restrictions on the boundary values besides the one used in Theorem~\ref{thm:long-range-order} would lead to other behaviors of the coloring in the bulk of the domain. This idea is captured by the notion of a Gibbs measure: a probability measure on proper $q$-colorings of $\Z^d$ for which the conditional distribution of the coloring on any finite set, given the coloring outside the set, is uniform on the proper colorings extending the boundary values. As discussed before, frozen configurations give rise to trivial Gibbs measures, supported on a single frozen configuration. To avoid such degenerate situations, one often restricts attention to \emph{maximal entropy} Gibbs measures
-- Gibbs measures invariant under translations by a full-rank sublattice of $\Z^d$, termed \emph{periodic} Gibbs measures, whose measure-theoretic entropy equals the topological entropy of proper $q$-colorings. Let us define the latter terms precisely. The \emph{topological entropy} of proper $q$-colorings is defined as
\[ h_{\text{top}} := \lim_{n \to \infty} \frac{\log \big|\Omega^{\text{free}}_{\{1,\ldots, n\}^d}\big|}{n^d}\]
with $\Omega^{\text{free}}_\Lambda$ the set of proper $q$-colorings of $\Lambda$\footnote{Note that every coloring in $\Omega^{\text{free}}_{\{1,\dots,n\}^d}$ may be extended to a proper coloring of all of $\Z^d$, e.g., by iterated reflections, so that this set coincides with the set of all ``globally admissible'' proper colorings of $\{1,\dots,n\}^d$.}, and where the above limit exists by subadditivity. The \emph{measure-theoretic entropy} (also known as Kolmogorov--Sinai entropy) of a periodic measure $\mu$ supported on proper $q$-colorings of $\Z^d$ is
\[ h(\mu) := \lim_{n \to \infty} \frac{\Ent(\mu|_{\{1,\ldots,n\}^d})}{n^d} ,\]
with $\mu|_{\Lambda}$ being the marginal distribution of $\mu$ on $\Lambda$, with $\Ent$ standing for Shannon's entropy (see Section~\ref{sec:entropy}), and with the limit  existing by subadditivity. Since Shannon entropy is maximized by the uniform distribution, it follows that $h(\mu) \le h_{\text{top}}$ for any such $\mu$.
The variational principle tells us that equality is achieved by some $\mu$. Any such $\mu$ is said to be of \emph{maximal entropy}.
A theorem of Lanford--Ruelle (see, e.g.,~\cite{misiurewicz1976short}) tells us that every measure of maximal entropy is also a Gibbs measure (so that there is some redundancy when speaking about a maximal-entropy Gibbs measure). We stress that a measure of maximal entropy is, by definition, always assumed to be periodic.

A concrete question, which has received significant attention in the literature (see Section~\ref{sec:remarks on long-range order}), is to determine whether \emph{multiple} Gibbs measures of maximal entropy exist for any number of colors~$q$, when the dimension~$d$ is sufficiently high. In fact, Theorem~\ref{thm:long-range-order} implies the existence of multiple Gibbs measures, one for each dominant pattern $(A,B)$, and it is not overly difficult to establish that these have maximal entropy. This fact, along with additional properties, is formulated in the following result.

\begin{theorem}\label{thm:existence_Gibbs_states}
Let $q \ge 3$ and suppose that the dimension $d$ satisfies~\eqref{eq:dim-assump}. For each dominant pattern $(A,B)$ there exists a Gibbs measure $\mu_{(A,B)}$ such that, for any sequence of domains $\Lambda_n$ increasing to $\Z^d$, the measures $\Pr_{\Lambda_n,(A,B)}$ converge weakly to $\mu_{(A,B)}$ as $n \to \infty$. In particular, $\mu_{(A,B)}$ is invariant to automorphisms of $\Z^d$ preserving the two sublattices. Moreover, the $(\mu_{(A,B)})$ are distinct, extremal and of maximal entropy.
\end{theorem}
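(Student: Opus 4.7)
My plan has three ingredients, with Theorem~\ref{thm:long-range-order} as the workhorse: (i) extract a weak limit and identify it uniquely via a shielding argument, which also yields invariance; (ii) distinguish different patterns via marginal comparison; (iii) prove extremality and maximal entropy.

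\textbf{Convergence, Gibbs property, invariance.} The family $(\Pr_{\Lambda_n,(A,B)})$ is tight because the local state space is finite, so weak subsequential limits exist, each a Gibbs measure for proper $q$-colorings by a standard DLR argument. To get convergence of the full sequence, I would fix a finite $F \subset \Z^d$ and show that, with high probability, there exists a ``shield'' around $F$: a closed surface whose vertices all lie in the $(A,B)$-pattern. Conditional on the shield and on its realized values, the distribution of $f$ inside the shield is uniform on proper $q$-colorings extending these values by the Markov property, \emph{independently of $\Lambda_n$}, which pins down a unique candidate $\mu_{(A,B)}$. Existence of such a shield with probability tending to $1$ as its separation from $F$ grows is not immediate from~\eqref{eq:main_thm_bound} alone (direct union bounds on a thick annular shell diverge), but should follow from the contour-type estimates underlying the proof of Theorem~\ref{thm:long-range-order}, which control the tail of the size of a connected ``defect cluster'' $\{v : v \text{ not in pattern}\}$ around a given vertex. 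Invariance under a sublattice-preserving automorphism $\sigma$ is then automatic, since $\sigma(\Lambda_n)$ is again a domain exhaustion and $\Pr_{\sigma(\Lambda_n),(A,B)} = \sigma_\ast\Pr_{\Lambda_n,(A,B)}$ has the same unique limit.

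\textbf{Distinctness.} Fix any even $v_0 \in \Z^d$. Passing~\eqref{eq:main_thm_bound} to the weak limit gives $\mu_{(A,B)}(f(v_0) \in A) \ge 1 - \eps_d$ with $\eps_d \to 0$ as $d \to \infty$. Both $\Pr_{\Lambda,(A,B)}$ and $\mu_{(A,B)}$ are invariant under separate permutations of $A$ and of $B$, so $\mu_{(A,B)}(f(v_0)=a) \ge (1-\eps_d)/|A|$ for each $a \in A$. Given a distinct dominant pattern $(A',B')$, after swapping the pair if necessary one can find $a \in A \setminus A'$; the analogous inequality for $(A',B')$ gives $\mu_{(A',B')}(f(v_0)=a) \le \eps_d$, and for $d$ satisfying~\eqref{eq:dim-assump} this gap is nontrivial.

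\textbf{Extremality and maximal entropy.} I expect extremality to be the main obstacle. The natural route is tail-triviality via the reverse martingale theorem: $\Pr(E \mid \F_{\Lambda(R)^c}) \to \Pr(E \mid \mathcal{T})$ almost surely as $R \to \infty$ for any local event $E$ and the tail $\sigma$-algebra $\mathcal{T}$; the same shielding mechanism implies that with high probability the conditional expectation on the left is determined by finitary data localized on an $(A,B)$-patterned separating surface, whose law converges to a deterministic value, forcing $\Pr(E \mid \mathcal{T}) = \Pr(E)$ almost surely. A more robust alternative imitates the cluster-swapping proof of Lemma~\ref{lem:ergodic measures are extremal}: use~\eqref{eq:main_thm_bound} (combined with the contour estimates) to rule out infinite disagreement clusters between two independent $\mu_{(A,B)}$-samples and invoke Corollary~\ref{cor:coupling_to_get_extremality}. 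For maximal entropy, a lower bound $h(\mu_{(A,B)}) \ge \tfrac{1}{2}(\log|A| + \log|B|) + o_d(1)$ comes from the $(|A|\cdot|B|)^{|\Lambda|/2}$ pure $(A,B)$-colorings supported by $\Pr_{\Lambda,(A,B)}$; a matching upper bound on $h_{\text{top}}$ follows by decomposing an arbitrary proper $q$-coloring of a large box into its dominant pattern (of which there are only $O_q(1)$ choices) plus a sub-exponential contribution from defects controlled by~\eqref{eq:main_thm_bound}.
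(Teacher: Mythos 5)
The paper itself does not prove this theorem; it states it with a pointer to~\cite{peled2018rigidity} (and for $q=3$ to~\cite{galvin2012phase,feldheim2015long}), so I will assess your plan on its own merits. Your distinctness argument is correct, and your observation that invariance under sublattice-preserving automorphisms follows automatically once the weak limit is shown to be unique along all domain exhaustions is also right. The substantive gaps are in the convergence and extremality steps.

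\textbf{Convergence.} You condition on ``the shield \emph{and} its realized values'' and observe that, given both, the law inside is $\Lambda_n$-independent; you then say this ``pins down a unique candidate.'' That inference does not go through: the law of the pair (shield location, shield colors) still depends on $\Lambda_n$, so the mixture of conditional measures you obtain is not a priori $\Lambda_n$-independent. The missing observation that makes the shielding argument work is the \emph{decoupling inside a patterned annulus}: in a thick annular shell where $f$ is entirely in the $(A,B)$-pattern, adjacent vertices automatically take colors from the disjoint sets $A$ and $B$, so the properness constraint is vacuous there, and the conditional law of the annulus (given that it is in-pattern and given the coloring beyond it) factorizes as an independent product; consequently the inner and outer boundary layers of the annulus do not communicate, and the conditional law of $f$ on $F$ given only the \emph{event} that the annulus is in-pattern (not the colors) is a fixed measure $\nu$ independent of $\Lambda_n$ and of the external boundary values. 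Your plan needs this decoupling explicitly; without it the uniqueness of the limit is circular. Separately, the probability of the in-pattern annulus event cannot be bounded by a naive union bound of~\eqref{eq:main_thm_bound} over its $\sim R^{d-1}$ vertices (as you acknowledge), and one genuinely needs the breakup/contour tail estimates, not just the single-vertex bound.

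\textbf{Extremality.} Your alternative route via cluster swapping and Corollary~\ref{cor:coupling_to_get_extremality} would fail for $q\ge4$. That corollary requires that two \emph{independent} samples $f,g$ from $\mu_{(A,B)}$ have no infinite connected component of $\{f\neq g\}$. But for $q \geq 4$ one has $\min(|A|,|B|)\ge 2$, so two independent samples typically disagree at a vertex even when both samples are in the $(A,B)$-pattern there. The defect-cluster estimates implied by~\eqref{eq:main_thm_bound} therefore say nothing about the disagreement percolation between independent samples. Note also that the machinery of Section~\ref{sec:uniqueness-of-ergodic-Gibbs} (Lemma~\ref{lem:stochastic domination}, Lemma~\ref{lem:ergodic measures are extremal}) is tailored to height functions and uses the linear order on $\Z$-valued configurations, which proper $q$-colorings lack, so it cannot be imported wholesale. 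Your first route (reverse martingale plus shielding) is the right one, but it relies on the same decoupling observation that is missing above. Finally, your maximal-entropy sketch only yields $|h(\mu_{(A,B)}) - h_{\text{top}}| = O(\eps)$ for some small $\eps>0$ depending on $d,q$, not exact equality; to close the gap one needs a comparison between the patterned-boundary partition function and the free one (the Kempe-chain argument of Galvin--Kahn--Randall--Sorkin, referenced in Section~2.2), together with the fact that $\mu_{(A,B)}$ is the weak limit of $\Pr_{\Lambda_n,(A,B)}$ and a semicontinuity argument for specific entropy.
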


Together with Theorem~\ref{thm:long-range-order} we see that the Gibbs measure $\mu_{(A,B)}$ has a tendency towards the $(A,B)$-pattern at all vertices. The proof yields additional facts, that large spatial deviations from the $(A,B)$-pattern are exponentially suppressed and that the measure $\mu_{(A,B)}$ is strongly mixing with an exponential rate (see~\cite{peled2018rigidity} for exact definitions and proofs).

Theorem~\ref{thm:existence_Gibbs_states} shows that there are at least $\binom{q}{q/2}$ extremal maximal-entropy Gibbs measures for even~$q$ and $2\binom{q}{\lfloor q/2 \rfloor}$ such Gibbs measures for odd $q$. The following result shows that these exhaust all possibilities.
\begin{theorem}\label{thm:characterization_of_Gibbs_states}
  Let $q \ge 3$ and suppose that the dimension $d$ satisfies~\eqref{eq:dim-assump}. Then any (periodic) maximal-entropy Gibbs measure is a mixture of the measures $\{\mu_{(A,B)}\}$.
\end{theorem}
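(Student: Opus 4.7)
My strategy is to reduce to extremal maximal-entropy Gibbs measures, to show each such measure has a well-defined global ``phase'' which is one of the dominant patterns, and finally to identify the measure with the corresponding $\mu_{(A,B)}$ via a disagreement-coupling argument.

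The first step is a standard reduction. Let $\mu$ be a periodic maximal-entropy Gibbs measure, invariant under a full-rank sublattice $\mathcal{L}\subset\Z^d$. The specific (measure-theoretic) entropy is affine on $\mathcal{L}$-invariant measures, so in the ergodic decomposition $\mu=\int\nu\,dQ(\nu)$ (into $\mathcal{L}$-ergodic measures) one has $h(\mu)=\int h(\nu)\,dQ(\nu)$. Combined with $h(\nu)\le h_{\mathrm{top}}$ and $h(\mu)=h_{\mathrm{top}}$, this forces $Q$-a.e.\ component $\nu$ to be maximal-entropy (hence Gibbs by Lanford--Ruelle) and, by the analog of Lemma~\ref{lem:ergodic measures are extremal} for the coloring specification, extremal as a Gibbs measure. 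So it suffices to show that every extremal maximal-entropy Gibbs measure $\nu$ coincides with some $\mu_{(A,B)}$.

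The second step is to extract a dominant pattern from such a $\nu$. For $v\in\Z^d$, $R\ge 1$, and a dominant pattern $P=(A,B)$, let $\rho_R(f,v,P)$ be the fraction of vertices in the ball of radius $R$ around $v$ that are in the $P$-pattern, and let $\Phi_R(f,v)$ be the unique $P$ with $\rho_R(f,v,P)>1-\delta$ if one exists (for a small fixed $\delta>0$), and $\ast$ otherwise. The core input is the Peierls/contour estimate underlying Theorem~\ref{thm:long-range-order}: in the regime~\eqref{eq:dim-assump}, the topological entropy of colorings containing a contour of size $N$ separating two distinct dominant phases, or a region of size $N$ that matches no dominant pattern, is strictly less than $N\cdot h_{\mathrm{top}}$ by a gap of order $\exp\!\bigl(-\tfrac{d}{q^3(q+\log d)}\bigr)$ per contour plaquette. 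By a standard variational-principle argument, any maximal-entropy Gibbs measure must assign zero asymptotic density to such ``bad'' regions. Hence $\nu$-almost surely the density of $v$ with $\Phi_R(f,v)=\ast$ tends to $0$ as $R\to\infty$.

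Third, the $\mathcal{L}$-ergodicity of $\nu$ makes the density of vertices with $\Phi_R(f,v)=P$ a deterministic number $p_R(P)$; the contour estimate forbids stable coexistence of two phases at positive density (the interface between them would have positive density of contour plaquettes, contradicting the preceding step), so in the limit $R\to\infty$ exactly one $p_R(P)\to 1$. Call this distinguished pattern $(A,B)$. Finally, sample $f\sim\nu$ and $g\sim\mu_{(A,B)}$ independently. By the previous paragraph and Theorem~\ref{thm:long-range-order}, both configurations have $(A,B)$-density $1$, so the disagreement set $D=\{f\neq g\}$ has asymptotic density $0$ and in particular, with positive probability, contains no infinite component. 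Applying the coloring analog of the cluster-swap / disagreement-percolation argument of Lemma~\ref{lem:stochastic domination}--Corollary~\ref{cor:coupling_to_get_extremality} (which works verbatim for the coloring specification after swapping on finite disagreement clusters, since a local swap of a proper coloring on a finite component yields another proper coloring) forces $\nu=\mu_{(A,B)}$.

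\textbf{Main obstacle.} The hard step is the second one: importing the rigidity of Theorem~\ref{thm:long-range-order}, which was established under pure $(A,B)$ boundary conditions, into a statement valid for an \emph{arbitrary} maximal-entropy Gibbs measure. This requires the contour argument to be formulated at the level of specific entropy rather than of a particular boundary-conditioned measure, estimating $h_{\mathrm{top}}$ of colorings containing a prescribed contour and showing that this is strictly smaller than $h_{\mathrm{top}}$ of all proper colorings by a margin proportional to the contour size; the quantitative room to do so is exactly the gap encoded in~\eqref{eq:dim-assump}.
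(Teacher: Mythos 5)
The paper itself does not prove this theorem in the text --- it is stated as a result of~\cite{peled2018rigidity} and only the proof of Theorem~\ref{thm:long-range-order} is outlined --- so I am judging your proposal against the intended argument rather than a written one. Your overall architecture (ergodic decomposition plus affinity of specific entropy to reduce to ergodic components; an entropy-gap version of the Peierls estimate to force zero density of defects; ergodicity to select a single dominant pattern) is the right shape and matches the strategy of~\cite{peled2018rigidity} in spirit. You are also right to flag the transfer of the contour estimate from $(A,B)$-boundary-conditioned measures to arbitrary maximal-entropy measures as the hard technical step; that is indeed where the work lies, and the entropy machinery of Sections~\ref{sec:entropy}--\ref{sec:entropy-bounds} is what makes it possible.

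However, your final identification step contains a genuine error. You sample $f\sim\nu$ and $g\sim\mu_{(A,B)}$ independently and claim that because both are in the $(A,B)$-pattern at density $1$, the disagreement set $\{f\neq g\}$ has density $0$. This is false: being in the $(A,B)$-pattern only constrains $f(v)$ to lie in $A$ or $B$ according to parity, and the whole point of the ordered phase is that it retains entropy of roughly $\tfrac12\log\bigl(\lfloor\tfrac q2\rfloor\lceil\tfrac q2\rceil\bigr)$ per site. Two independent samples therefore disagree at a typical vertex with probability about $1-\tfrac{1}{|A|}$ or $1-\tfrac{1}{|B|}$, so for $q\ge 4$ the set $\{f\neq g\}$ has density close to $\tfrac12$ or more and certainly contains an infinite component; even for $q=3$ the odd sublattice disagrees at density about $\tfrac12$ and finiteness of disagreement clusters would need a separate argument. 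Consequently the cluster-swapping step of Lemma~\ref{lem:stochastic domination}/Corollary~\ref{cor:coupling_to_get_extremality} cannot be invoked. (The swap itself does preserve properness on finite disagreement clusters, but the hypothesis of no infinite disagreement cluster fails.) The correct object to control is not $\{f\neq g\}$ but the union of the two defect sets $\{v: f \text{ or } g \text{ violates the }(A,B)\text{-pattern near }v\}$: one must show this union does not percolate (density zero is not enough; one needs the Peierls bounds on connected defect components), so that almost every vertex is surrounded by a circuit on which both configurations are in the pure pattern, and then use the Gibbs property to couple the conditional laws inside. A secondary, more minor issue is your appeal to ``the analog of Lemma~\ref{lem:ergodic measures are extremal} for the coloring specification'': that lemma's proof uses the monotone (cluster-swapping) structure of height functions and does not transfer verbatim to colorings with $q\ge 3$; fortunately extremality of the components is not actually needed to conclude that $\mu$ is a mixture of the $\mu_{(A,B)}$.
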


\subsection{Remarks on the main results}\label{sec:remarks on long-range order}
In the physics literature, to the authors' knowledge, the problem was first considered by Berker--Kadanoff~\cite{berker1980ground} who suggested in 1980 that a phase with algebraically decaying correlations may occur at low temperatures (including zero temperature) with fixed $q$ when $d$ is large. This prediction was challenged by numerical simulations and an $\varepsilon$-expansion argument of Banavar--Grest--Jasnow~\cite{banavar1980ordering} who predicted a Broken-Sublattice-Symmetry (BSS) phase at low temperatures for the $3$ and $4$-state models in three dimensions. The BSS phase is exactly of the type proved to occur here, with a global tendency towards a pure $(A,B)$-ordering for a dominant pattern $(A,B)$. Koteck\'y~\cite{kotecky1985long} argued for the existence of the BSS phase at low temperature when $q=3$ and $d$ is large by analyzing the model on a decorated lattice. This prediction became known as \emph{Koteck\'y's conjecture}.

In the mathematically rigorous literature, Koteck\'y's conjecture remained open for 25 years until it was finally answered by the first author~\cite{peled2010high} and by Galvin--Kahn--Randall--Sorkin~\cite{galvin2012phase} (following closely-related papers by Galvin--Randall~\cite{galvin2007torpid} and Galvin--Kahn~\cite{galvin2004phase}). Its extension to the low-temperature anti-ferromagnetic $3$-state Potts model was resolved by Feldheim and the second author~\cite{feldheim2015long}. The results of \cite{peled2010high, galvin2012phase} correspond to the $q=3$ case of Theorem~\ref{thm:long-range-order}, and to the existence of $6$ extremal maximal-entropy Gibbs states which results from it (the fact that the measures have maximal entropy is shown in~\cite[Section~5]{galvin2012phase}), while the characterization result given in Theorem~\ref{thm:characterization_of_Gibbs_states} is new also for this case (the convergence result in Theorem~\ref{thm:existence_Gibbs_states} is shown in~\cite{feldheim2015long} for this case). Periodic boundary conditions were considered in~\cite{galvin2007torpid, feldheim2013rigidity} and in~\cite{peled2010high} for the corresponding height function (also on tori with non-equal side lengths).

Engbers and Galvin~\cite{engbers2012h2} establish long-range order on hypercube graphs, $\{1,\ldots, n\}^d$ for fixed $n$ and $d$ tending to infinity, for the wide class of weighted graph homomorphism models, which includes the proper $q$-coloring model as a special case. The methods used to prove the theorems of Section~\ref{sec:long-range order} admit extensions to this class, as well as to more general discrete spin systems with nearest-neighbor interactions, though under the assumption that all dominant patterns, suitably defined, are equivalent (a form of symmetry condition); see~\cite{peled2017condition,peledspinka2018spin} for details. This shows, for instance, that the long-range order established for proper $q$-colorings persists to the low-temperature anti-ferromagnetic $q$-state Potts model, even for temperatures \emph{growing} as a power of the dimension.

The results of Section~\ref{sec:long-range order} are not valid in low dimensions due to Dobrushin's uniqueness condition. Nonetheless, they are applicable in any dimension $d\ge 2$ provided the underlying graph is suitably modified. Precisely, the results remain true when $\Z^d$ is replaced by a graph of the form $\Z^{d_1}\times\T_{2m}^{d_2}$, $m\ge 1$ integer, provided $d_1\ge 2$ and $d=d_1+d_2$ satisfies \eqref{eq:dim-assump}, where $\T_{2m}$ is the cycle graph on $2m$ vertices. The graph $\Z^{d_1}\times\T_{2m}^{d_2}$ may be viewed as a subset of $\Z^d$ in which the last $d_2$ coordinates are restricted to take value in $\{0,1,\ldots, 2m-1\}$ and are endowed with periodic boundary conditions. In this sense, it is only the local structure of $\Z^d$ which matters to the results.

The emergent long-range order is lattice dependent. Irregularities in the lattice (i.e., having different sublattice densities) often promote the formation of order. This may be used, for instance, to find for each $q$ a \emph{planar} lattice on which the proper $q$-coloring model is ordered~\cite{huang2013two}. However, irregularities also modify the nature of the resulting phase, leading to long-range order in which a single color appears on most of the lower-density sublattice~\cite{kotecky2014entropy}, or to partially ordered states~\cite{qin2014partial}. As an illustration of this~\cite{kotecky1985long, huang2013two}, consider replacing each edge of a domain in $\Z^2$ by a large number $M$ of parallel paths of length $2$. On this graph, the restriction of a uniformly sampled proper $q$-coloring to the vertices of the original lattice is a \emph{ferromagnetic} $q$-state Potts model at low temperature. Thus, when $q$ is small compared to $M$, a single color will be assigned to most vertices of the original lattice, resulting in more available colors for the vertices on the added paths.

\subsection{Overview of the proof of long-range order}
\label{sec:proof-overview}

In this section, we give a high-level view of the proof of Theorem~\ref{thm:long-range-order}. The basic methodology used is based on the classical \emph{Peierls argument} which was introduced in order to establish long-range order in the low-temperature Ising model. It is instructive to review the key steps in this argument before discussing the significantly more complicated case of proper colorings.
We thus begin in Section~\ref{sec:Peierls argument} with an exposition of the classical Peierls argument for the Ising model, which consists of three key steps. We then describe in Section~\ref{sec:difficulties} the difficulties that arise in applying this type of approach to the proper coloring model. We then give details on each of these three steps in Sections~\ref{sec:ordered-regions}, \ref{sec:proof-overview-breakup} and~\ref{sec:approx}.

We use the following notation. Let $U\subset\Z^d$ be a set. We let $\partial U$ denote its edge-boundary, $N(U)$ denote its neighborhood (vertices adjacent to some vertex in $U$), $U^+ := U \cup N(U)$ denote its $1$-extension, $\intB U := U \cap N(U^c)$ denote its internal vertex boundary (as in~\eqref{eq:internal vertex boundary}), $\extB U := N(U) \setminus U$ denote its external vertex boundary and $\intextB U := \intB U \cup \extB U$ denote both boundaries. We say that $U$ is an even (odd) set if $\intB U$ is contained in the even (odd) sublattice of $\Z^d$. An even (odd) set $U$ is called \emph{regular} if both it and its complement contain no isolated vertices. Let $\Even$ ($\Odd$) denote the set of even (odd) vertices in $\Z^d$.

\subsubsection{The Ising model and the classical Peierls argument}\label{sec:Peierls argument}
We review here the key steps in the classical Peierls argument~\cite{peierls1936ising} used to establish long-range order in the low-temperature Ising model. Other reviews are given in~\cite[Section 2.5]{peled2019lectures} and~\cite[Section~3.7.2]{friedli2017statistical}.

The Ising model at inverse temperature $\beta\ge 0$ on a domain $\Lambda\subset\Z^d$ is the probability measure on the configuration space $\Omega:=\{\sigma:\Lambda\to\{-1,1\}\}$ defined by
\begin{equation}\label{eq:Ising model def}
  \P^{\text{Ising}}_{\beta, \Lambda}(\sigma):=\frac{1}{Z_{\beta,\Lambda}} \exp\left[-\beta\sum_{\substack{u,v\in\Lambda\\ u\sim v}} \sigma(u)\sigma(v)\right].
\end{equation}
At zero temperature, i.e., in the limit $\beta\to\infty$, the model is supported on the two constant configurations. Constant configurations play an analogous role in the Ising model to the role played ``pure $(A,B)$-colorings'', with $(A,B)$ a dominant pattern, in the proper $q$-coloring model. The Ising model analogue to Theorem~\ref{thm:long-range-order} is that at low temperature, when conditioning the configuration to take the same value on all boundary vertices of $\Lambda$, the value at each of the interior vertices gains a significant bias towards the boundary value, uniformly in the domain $\Lambda$. To state this precisely, let \begin{equation*}
  \Omega_+:=\{\sigma\in\Omega\colon \sigma|_{\intB \Lambda}\equiv 1\}
\end{equation*}
and let $\P^{\text{Ising}}_{\beta, \Lambda, +}$ denote the measure $\P^{\text{Ising}}_{\beta, \Lambda}$ conditioned on $\sigma\in\Omega_+$.
\begin{theorem}(ordering at low-temperature for the Ising model)\label{thm:low temperature order Ising}
  There exists $C>0$ such that for all dimensions $d\ge 2$, inverse temperature $\beta\ge \frac{C\log d}{d}$, domains $\Lambda\subset\Z^d$ and $v\in\Lambda$,
  \begin{equation}\label{eq:Ising ordering}
    \P^{\text{Ising}}_{\beta, \Lambda, +}(\sigma(v) = -1)\le \frac{1}{4}.
  \end{equation}
\end{theorem}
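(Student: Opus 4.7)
The plan is to execute the classical Peierls cluster argument in three steps: (i) assign a minus cluster to each configuration with $\sigma(v)=-1$, (ii) bound the probability of any fixed cluster by a spin-flip injection, and (iii) sum the resulting estimate over all clusters.

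For (i) and (ii), fix $\sigma\in\Omega_+$ with $\sigma(v)=-1$ and let $K=K(\sigma)$ be the connected component of $v$ in the minus region $\{u\in\Lambda:\sigma(u)=-1\}$. Since $\sigma\equiv+1$ on $\intB\Lambda$, $K$ is a finite subset of $\Lambda\setminus\intB\Lambda$ and every neighbor of $K$ lying outside $K$ carries spin $+1$; so every edge in the edge-boundary $\partial K$ has its $K$-endpoint equal to $-1$ and its other endpoint equal to $+1$, and in particular $|\partial K|\ge 2d$ (attained by $K=\{v\}$). Now let $\Phi_K\colon\Omega_+\to\Omega_+$ be the involution flipping every spin in $K$; because $K\cap\intB\Lambda=\emptyset$ it preserves $\Omega_+$, and it is injective on $\{\sigma:K(\sigma)=K\}$. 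The spin-product on each edge outside $\partial K$ is unchanged by $\Phi_K$, while on each edge of $\partial K$ it flips from $-1$ to $+1$; this decreases the number of bichromatic edges by exactly $|\partial K|$, so, with the Hamiltonian normalised so that the constant configurations are the ground states, $\P^{\mathrm{Ising}}_{\beta,\Lambda,+}(\sigma)=e^{-2\beta|\partial K|}\,\P^{\mathrm{Ising}}_{\beta,\Lambda,+}(\Phi_K(\sigma))$. Summing this over $\sigma$ with $K(\sigma)=K$ and using injectivity of $\Phi_K$ yields, uniformly in the admissible cluster $K$,
\[ \P^{\mathrm{Ising}}_{\beta,\Lambda,+}\!\bigl(K(\sigma)=K\bigr)\;\le\;e^{-2\beta|\partial K|}. \]

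Step (iii) is a union bound followed by a Peierls-type count. Summing over all admissible clusters gives
\[ \P^{\mathrm{Ising}}_{\beta,\Lambda,+}\!\bigl(\sigma(v)=-1\bigr)\;\le\;\sum_{\substack{v\in K\subset\Lambda\setminus\intB\Lambda\\ K\text{ connected}}}e^{-2\beta|\partial K|}. \]
Grouping clusters by $|\partial K|$ and using a standard enumeration of connected subsets of $\Z^d$ containing $v$ with prescribed edge-boundary, together with the lower bound $|\partial K|\ge 2d$ from step (i), reduces the total to a series whose leading contribution is of order $e^{-4\beta d}$ times a combinatorial factor. Taking the constant in $\beta\ge(C\log d)/d$ large enough drives this sum below $1/4$, uniformly in $\Lambda$, $v$, and $d\ge 2$.

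The main obstacle is precisely this combinatorial step: one needs an enumeration of connected clusters in $\Z^d$ by their edge-boundary sharp enough that the $e^{-2\beta|\partial K|}$ suppression beats the cluster entropy even when $\beta$ is only of order $(\log d)/d$. Lattice-animal entropy grows like $(2de)^{|K|}$, so to reach this threshold (rather than a cruder $\beta\gtrsim\log d$ that the simplest count would force) one must exploit the $d$-dependent isoperimetry of $\Z^d$, and in particular the factor of $d$ in the minimum-boundary estimate $|\partial K|\ge 2d$, which makes the leading $e^{-4\beta d}$ term already $d^{-4C}$-small at the prescribed value of $\beta$.
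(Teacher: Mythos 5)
Your steps (i) and (ii) are correct and match the paper's argument: the spin-flip on the minus region is exactly the map $\sigma\mapsto\sigma^U$ of~\eqref{eq:ising-sign-flip}, and your per-cluster bound $e^{-2\beta|\partial K|}$ is~\eqref{eq:ising-domain-wall-bound}. The gap is in step (iii), and it is not merely a missing reference: the class of objects you sum over is wrong. You take $K$ to be the exact connected minus-component of $v$, which is connected but whose complement may be badly disconnected, and there is no enumeration of such sets by edge-boundary size that is exponential in $\ell=|\partial K|$. Concretely, take $K$ to be an $m\times\cdots\times m$ cube with $k$ well-separated single-vertex cavities removed: $K$ is connected, $|\partial K|=2dm^{d-1}+2dk$, and the number of ways to place the cavities is of order $\binom{cm^d}{k}$, which for $k\asymp m^{d-1}$ grows like $e^{c\ell\log\ell}$. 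Hence
\begin{equation*}
\sum_{\substack{K\ni v\ \text{connected}}}e^{-2\beta|\partial K|}\;=\;\infty
\end{equation*}
in the limit of large $\Lambda$, for every fixed $\beta$, and your union bound is vacuous. (The individual events $\{K(\sigma)=K\}$ are disjoint, so the true probabilities sum to at most $1$; it is the bound $e^{-2\beta|\partial K|}$ that is far too weak to be summed over this class.)

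The paper's resolution is to replace the exact cluster by its \emph{outermost contour}: one fills in the finite components of $K^c$ to obtain a domain $U\ni v$ (connected with connected complement) whose boundary is still a $-+$ domain wall, and the flip argument applies verbatim to $U$. The payoff is that $\partial U$, viewed as a set of plaquettes, is \emph{connected} (this uses $d\ge2$; see Tim\'ar's lemma cited in the paper), which is what makes a count exponential in $\ell$ possible at all; and the refined counts of Lebowitz--Mazel and Balister--Bollob\'as give $N_\ell\le\exp(C\ell\log d/d)$ as in~\eqref{eq:bounds on the number of contours}. That rate, combined with $N_\ell=0$ for $\ell<2d$, is precisely what lets the series converge and be small already at $\beta\ge C\log d/d$. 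You correctly identify the counting as the main obstacle and correctly sense that something $d$-dependent beyond a crude animal count is needed, but the isoperimetric bound $|\partial K|\ge 2d$ does not supply it, and counting clusters by vertex number also fails since $|K|$ is not controlled by $|\partial K|$. Without the passage to contours and the cited contour-counting input, the proof does not close.
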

We remark that the assumption of low temperature is required for the conclusion. Indeed, a calculation shows that Dobrushin's uniqueness condition (as given in Theorem~\ref{thm:Dobrushin uniqueness}) is satisfied when $e^{4\beta}<1+\frac{2}{d-1}$ so that in this regime the probability in~\eqref{eq:Ising ordering} equals $\tfrac{1}{2}$ plus a factor which decays exponentially in the distance of $v$ to $\intB\Lambda$.

We proceed to describe the main steps in the proof of the theorem.

\smallskip
\noindent
\textbf{Ordered regions, contours and domain walls:} Given a configuration $\sigma\in\Omega$, one may consider the regions
\begin{equation*}
  Z_-:=\{v\in\Lambda\colon \sigma(v) = -1\}\quad\text{and}\quad Z_+:=\{v\in\Lambda\colon \sigma(v) = 1\}.
\end{equation*}
These may be considered as ordered regions for $\sigma$ and our focus is on the the edges separating vertices in $Z_-$ and $Z_+$. Identifying a vertex $v$ in $\Z^d$ with the cube $v + [-\tfrac{1}{2},\tfrac{1}{2}]^d$ allows to identify the edge between adjacent $v,w$ with the $(d-1)$-dimensional face common to the cubes of $v$ and $w$. Such faces are termed \emph{plaquettes} and with this identification we can think of the edges between $Z_-$ and $Z_+$ as forming a collection of $(d-1)$-dimensional closed surfaces separating ordered regions in the configuration $\sigma$.

A \emph{contour} is the edge boundary $\partial U$ of a domain $U\subset\Z^d$. With the above identification, it may be thought of as a $(d-1)$-dimensional surface. The contour is said to be a \emph{$-+$ domain wall} in $\sigma$ if $\sigma|_{\intB U}\equiv -1$ and $\sigma|_{\extB U}\equiv 1$ (assuming also that $U^+\subset\Lambda$ for these to be well defined). If $\sigma\in\Omega_+$ then in order to have $\sigma(v)=-1$ there must exist at least one contour $\partial U$ with $v\in U$ which is a $-+$ domain wall in $\sigma$. Thus we have
\begin{equation}\label{eq:Ising probability of excitation}
  \P^{\text{Ising}}_{\beta, \Lambda, +}(\sigma(v) = -1)\le \sum_{\substack{U\text{ domain,}\\v\in U\text{ and }U^+\subset\Lambda}}\P(\partial U\text{ is a $-+$ domain wall in $\sigma$}).
\end{equation}

\smallskip
\noindent
\textbf{The probability that a contour is a domain wall:} Let $U$ be a domain with $U^+\subset\Lambda$. For a configuration $\sigma\in\Omega$ define a new configuration $\sigma^U$ by
\begin{equation}\label{eq:ising-sign-flip}
\sigma^U(v):=\begin{cases}
  -\sigma(v)&v\in U\\
  \sigma(v)&v\notin U
\end{cases}.
\end{equation}
Thus the transformation $\sigma\mapsto\sigma^U$ flips the sign of $\sigma$ on $U$. This is a bijection (even an involution) on $\Omega$, as we assume that $U$ is fixed.

Now if $\partial U$ a $-+$ domain wall for $\sigma$, one checks directly from the definition~\eqref{eq:Ising model def} that
\begin{equation}\label{eq:ising-domain-wall-cost}
  \frac{\P^{\text{Ising}}_{\beta, \Lambda, +}(\sigma)}{\P^{\text{Ising}}_{\beta, \Lambda, +}(\sigma^U)} = \exp(-2\beta|\partial U|).
\end{equation}
Writing $E_U$ for the set of $\sigma\in\Omega_+$ with $\partial U$ a $-+$ domain wall for $\sigma$, we conclude that
\begin{equation}\label{eq:ising-domain-wall-bound}
  \P^{\text{Ising}}_{\beta, \Lambda, +}(E_U) = \frac{\sum_{\sigma\in E_U}\P^{\text{Ising}}_{\beta, \Lambda, +}(\sigma)}{Z_{\beta,\Lambda}} = \frac{\sum_{\sigma\in E_U}\P^{\text{Ising}}_{\beta, \Lambda, +}(\sigma)}{\sum_{\sigma\in\Omega_+}\P^{\text{Ising}}_{\beta, \Lambda, +}(\sigma)}\le \frac{\sum_{\sigma\in E_U}\P^{\text{Ising}}_{\beta, \Lambda, +}(\sigma)}{\sum_{\sigma\in E_U}\P^{\text{Ising}}_{\beta, \Lambda, +}(\sigma^U)}=\exp(-2\beta|\partial U|).
\end{equation}
This can be interpreted as saying that domain walls are energetically penalized by a factor of $\exp(-2\beta)$ per edge. Substituting this estimate in~\eqref{eq:Ising probability of excitation} shows that
\begin{equation}\label{eq:Ising estimate via contours}
  \P^{\text{Ising}}_{\beta, \Lambda, +}(\sigma(v) = -1)\le \sum_{\ell=1}^\infty N_\ell \exp(-2\beta \ell),
\end{equation}
where $N_\ell$ is the number of domains $U$ with $v\in U$ and $|\partial U| = \ell$ (the condition $U^+\subset\Lambda$ may also be added but is not necessary for the sequel), i.e., the number of contours of length $\ell$ which surround $v$.

\smallskip
\noindent
\textbf{The number of contours of a given length:} To finish the proof of Theorem~\ref{thm:low temperature order Ising} we require an estimate on $N_\ell$. Using our assumption that $d\ge 2$, the boundary $\partial U$ of a domain $U$ may be shown to be connected in a suitable sense (i.e., its plaquettes form a connected $(d-1)$-dimensional surface. See Tim{\'a}r~\cite{timar2013boundary} for combinatorial proofs). In addition, it is straightforward that if $v\in U$ and $|\partial U| = \ell$ then there exists some $0\le k\le \ell$ for which $v + ke_1\in \intB U$ (with $e_1 = (1,0,\ldots, 0)$). It follows, using general results on the number of connected sets containing a given vertex in a graph with given maximal degree~\cite[Chapter 45]{Bol06}, that $N_\ell\le \ell C_d^\ell$ for some constant $C_d>0$ depending only on $d$. In fact, due to the importance of estimating $N_\ell$ in this and other problems, good bounds for the constant $C_d$ have also been determined, with the state of the art due to Lebowitz--Mazel~\cite{lebowitz1998improved} and Balister--Bollob{\'a}s~\cite{balister2007counting} whose works imply that
\begin{equation}\label{eq:bounds on the number of contours}
  \exp\left(\frac{c \log d}{d} \ell\right)\le N_\ell \le \exp\left(\frac{C \log d}{d} \ell\right)
\end{equation}
for positive absolute constants $C,c>0$, with the lower bound holding for even values of $\ell$ which are sufficiently large as a function of $d$. Lastly, it is simple to see that $N_\ell = 0$ if either $\ell$ is odd or $\ell<2d$. Theorem~\ref{thm:low temperature order Ising} now follows with $\beta_0(d)=\frac{C\log d}{d}$, for an absolute constant $C>0$, by plugging the estimate~\eqref{eq:bounds on the number of contours} into~\eqref{eq:Ising estimate via contours}.

The meticulous reader may notice the gap between the disordered regime $e^{4\beta}< 1 + \frac{2}{d-1}$ (which is approximately $\beta<\tfrac{1}{2(d-1)}$ in high dimensions) in which Dobrushin's uniqueness condition is satisfied and the ordered regime $\beta\ge\frac{C\log d}{d}$ in which Theorem~\ref{thm:low temperature order Ising} applies. In fact, the critical $\beta$ for long-range order is asymptotic to $\frac{1}{2d}$ as $d$ tends to infinity. Aizenman, Bricmont and Lebowitz~\cite{aizenman1987percolation} point out that a gap between the critical $\beta$ and the bound on it obtained from the Peierls argument is unavoidable in high dimensions. They point out that the Peierls argument, when it applies, excludes the possibility of \emph{minority percolation}. That is, the possibility that there is an infinite connected component of the value $-1$ in the infinite-volume limit obtained with $+1$ boundary conditions. However, as they show, such minority percolation does occur in high dimensions when $\beta\le \frac{c\log d}{d}$, yielding a lower bound on the minimal inverse temperature at which the Peierls argument applies.

\subsubsection{The difficulties to be addressed}\label{sec:difficulties}
Recall that $(A,B)$ is a dominant pattern if $A,B\subset\{1,\dots,q\}$ are disjoint and satisfy that $\{|A|,|B|\}=\{\lfloor \frac q2 \rfloor, \lceil \frac q2 \rceil\}$. Throughout we fix a domain $\Lambda \subset \Z^d$ and a dominant pattern
\begin{equation}\label{eq:P_0_def}
P_0=(A_0,B_0) \qquad\text{such that}\qquad |A_0| = \lfloor\tfrac{q}{2}\rfloor,~|B_0| =\lceil\tfrac{q}{2}\rceil .
\end{equation}
We consider proper $q$-colorings $f$ chosen from $\Pr_{\Lambda,P_0}$ so that $P_0$ is the ``boundary pattern''. We wish to implement a Peierls-type argument to show long-range order in $f$. To this end, following the steps described in Section~\ref{sec:Peierls argument}, we need to:
\begin{enumerate}
  \item Identify ordered regions in $f$.
  \item Show that the probability of any given set of contours being the domain walls between different ordered regions is exponentially small in their total length.
  \item Sum over contours to conclude that it is unlikely to have a domain wall surrounding a given vertex.
\end{enumerate}
Unfortunately, the method encounters difficulties at each of these steps. We briefly summarize here the issues that need to be addressed and expand on these in the following sections.

\smallskip
\noindent
\textbf{Ordered regions:} Theorem~\ref{thm:long-range-order} suggests that a region is ordered according to a dominant pattern $(A,B)$ if $f$ coincides with a ``pure $(A,B)$ coloring'' in that region. Thus dominant patterns replace the two possibilities for ordering present in the Ising model -- the $+1$ and $-1$ orderings. In the Ising model, a vertex was classified to the ordered regions $Z_+$ and $Z_-$ according to its value. For proper $q$-colorings, this is insufficient. Indeed, if $f(v)=i$ for some even vertex $v$, then $v$ may be in the ordered region of any dominant pattern $(A,B)$ with $i\in A$. This difficulty is addressed by classifying vertices into ordered regions according to the colors assigned to their \emph{neighbors}. This leads to another difficulty, which was not present in the Ising model. The colors assigned to the neighbors of a vertex  may be inconsistent with all dominant patterns, or may still be consistent with more than one dominant pattern. Thus we will also need to allow the possibility of disordered regions and the possibility of overlap between the ordered regions corresponding to different dominant patterns. In addition to these, and for reasons that will be explained below, the ordered region of each dominant pattern $(A,B)$ is defined in such a way that it is an even set if $|A|\le |B|$ and an odd set if $|A|>|B|$ (recall the definitions from the beginning of Section~\ref{sec:proof-overview}). These issues are expanded upon in Section~\ref{sec:ordered-regions}.

\smallskip
\noindent
\textbf{The cost of domain walls:} In the Ising model, we saw that the probability that a given contour of length $\ell$ is a domain wall is at most $\exp(-2\beta\ell)$. Thus domain walls are `penalized' by a factor of $\exp(-2\beta)$ per edge, and this penalty can be strengthened as needed by taking $\beta$ large. For the proper $q$-coloring model we will need to develop a similar bound for the domain walls between ordered regions, for the disordered regions not corresponding to any dominant pattern and for the regions of overlap between different ordered regions. However, the proper $q$-coloring model has no temperature parameter (it is already the zero-temperature limit of the anti-ferromagnetic $q$-state Potts model). Indeed, as the proper $q$-coloring model is a uniform measure on the allowed configurations, the `penalties' on such `bad' regions must be \emph{entropically driven} as opposed to the \emph{energetically driven penalty} in the Ising model. In other words, one needs to show that there are significantly less configurations which are consistent with the presence of a given bad region than the overall number of configurations. Such bounds are proved using entropy inequalities as expanded upon in Section~\ref{sec:proof-overview-breakup}.

\begin{figure}
	\centering
	\includegraphics[scale=0.55]{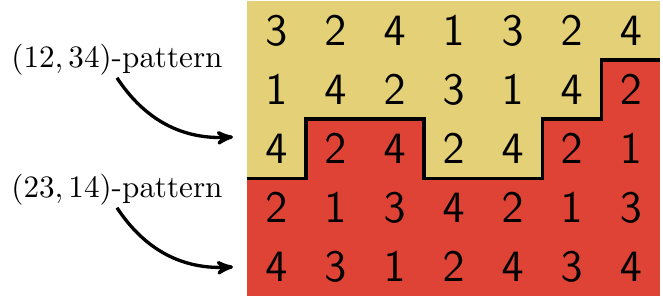}\qquad\qquad
	\includegraphics[scale=0.55]{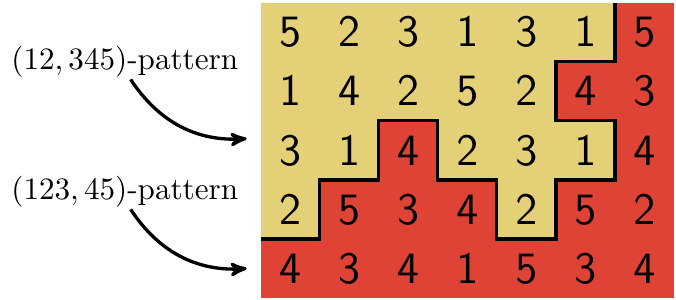}
	\caption{An interface between two regions associated to different dominant patterns for proper $q$-colorings (left: $q=4$, right: $q=5$).}
	\label{fig:interfaces}
\end{figure}

To gain intuition for the `penalty' associated to domain walls, let us analyze the entropic loss in the toy scenario in which the $P_0$-pattern is disturbed by a single `droplet' of a different dominant pattern $P=(A,B)$; see Figure~\ref{fig:interfaces}. 
More precisely, let $\emptyset \neq U\subset\Z^d$ be such that $U^+\subset\Lambda$ and let $n(U)$ be the number of proper colorings of $\Lambda$, for which $U^+$ is in the $P$-pattern and $(\Lambda\setminus U)^+$ is in the $P_0$-pattern. When $q$ is even, we claim that
\begin{equation} \frac{n(U)}{n(\emptyset)} \le \left(\frac{q-2}{q}\right)^{|\intextB U|},
\end{equation}
with equality if and only if $|A_0 \Delta A|=2$.
When $q$ is odd, we claim that
\begin{equation}\label{eq:single droplet in ordered pattern odd q}
\frac{n(U)}{n(\emptyset)} \le \left(\frac{q-1}{q+1}\right)^{\frac{1}{2d} |\partial U|},
\end{equation}
with equality if and only if either $U$ is an odd set and $A_0 \subset A$ or $U$ is an even set and $B_0 \subset B$.
To see these claims, note that in the colorings counted in $n(U)$, the set of allowed colors for an even vertex $u$ is $A$ if $u \in U \setminus \intB U$, is $A_0$ if $u \notin U^+$, and is $A \cap A_0$ if $u \in \intextB U$. Similarly, for an odd vertex it is $B$, $B_0$ or $B \cap B_0$. Moreover, every such choice leads to a coloring counted in $n(U)$.
Thus,
\begin{align*}
 \frac{n(U)}{n(\emptyset)} =\left(\frac{|A|}{|A_0|}\right)^{|U^-  \cap \Even|} \left(\frac{|B|}{|B_0|}\right)^{|U^- \cap \Odd|} \left(\frac{|A \cap A_0|}{|A_0|}\right)^{|\intextB U \cap \Even|} \left(\frac{|B \cap B_0|}{|B_0|}\right)^{|\intextB U \cap \Odd|} ,
\end{align*}
where $U^- := U \setminus \intB U$.
When $q$ is even, using that $|A|=|A_0|=|B|=|B_0|=\frac q2$ and $|A \cap A_0|=|B \cap B_0|$, we get that
\[ \frac{n(U)}{n(\emptyset)} = \left(\frac{2|A \cap A_0|}{q}\right)^{|\intextB U|} ,\]
which implies the claimed inequality and the equality case.
When $q$ is odd, consider first the case that $|A_0|<|A|$ (equivalently, $|B_0|>|B|$). The ratio $\frac{n(U)}{n(\emptyset)}$ is then maximized when $A_0 \subset A$. In this case, using that $|A_0|=|A \cap A_0|=|B|=|B \cap B_0|=\frac{q-1}2$ and $|A|=|B_0|=\frac{q+1}2$, we obtain that
%\[ \frac{n(U)}{n(\emptyset)} = \left(\frac{q-1}{q+1}\right)^{|\intextB U \cap \Odd| + |U^- \cap \Odd| - |U^- \cap \Even|} .\]
\[ \frac{n(U)}{n(\emptyset)} = \left(\frac{q-1}{q+1}\right)^{|U^+ \cap \Odd| - |U^- \cap \Even|} .\]
Finally, $2d|U^- \cap \Even| + |\partial U| \le 2d|U^+ \cap \Odd|$, since $|\{e \in E(\Z^d) : e \cap U \neq \emptyset\}|$ is sandwiched between the left- and right-hand sides, and equality is attained if and only if $U$ is an odd set. The case that $|B_0|<|B|$ is treated similarly. In the case that $|A_0|=|A|$, we obtain that $\frac{n(U)}{n(\emptyset)} < (\frac{q-1}{q+1})^{|\intextB U|} < (\frac{q-1}{q+1})^{\frac{1}{2d} |\partial U|}$.

This toy example shows a difference in behavior between the even and odd $q$ cases, with the odd case more difficult due to the lower entropic cost of creating interfaces between $P_0$- and $P$-ordered regions.
It is the odd $q$ case that motivates many of the definitions, including the above-mentioned fact that the ordered region corresponding to a dominant pattern should be either an even set or an odd set. Additionally, the example shows that the domain walls do not carry a high penalty per edge when $q$ is large. Indeed, the right-hand side of~\eqref{eq:single droplet in ordered pattern odd q} has the form $(1-\eps_q)^{\frac{|\partial U|}{2d}}$ with $\eps_q$ tending to zero as $q$ tends to infinity.

Let us return to the method by which we shall show that domain walls are `penalized'.
Recall that in the Ising model, we used a sign-flip transformation, given in~\eqref{eq:ising-sign-flip}, in order to deduce the bound in~\eqref{eq:ising-domain-wall-bound} on the probability that a given contour $\partial U$ is a $-+$ domain wall. Essentially, the flipping of the signs in $U$ eradicated the domain wall by transforming the order in $U$ (near the boundary) from $-$ to $+$. To obtain an analogous bound in the proper coloring model, we will use a more involved transformation, which, roughly speaking, permutes the colors in each ordered region, perhaps also shifting them by one lattice site, so as to make the pattern there agree with the boundary pattern, and erases the colors in the `bad' regions, replacing them with fresh samples of the boundary pattern. Much of the technical work is then focused on showing that this transformation indeed `repairs' the coloring, i.e., establishing suitable analogues of~\eqref{eq:ising-domain-wall-cost} and~\eqref{eq:ising-domain-wall-bound}. This is further explained in Section~\ref{sec:proof-overview-breakup}.

\smallskip
\noindent
\textbf{The number of contours:} The third step in the Peierls argument involves a sum over contours analogous to the sum performed in~\eqref{eq:Ising estimate via contours} for the Ising model. For proper colorings, however, the bound obtained for the presence of a single droplet in~\eqref{eq:single droplet in ordered pattern odd q} is insufficient for the sum to converge as, at least in high dimensions, the number of contours as estimated in~\eqref{eq:bounds on the number of contours} grows much more rapidly than the reciprocal of the bound. The intuition for the remedy comes from the fact, mentioned above, that in order for the bound~\eqref{eq:single droplet in ordered pattern odd q} to be saturated, the set $U$ needs to be even or odd. We thus proceed by considering the properties of such sets.

\begin{figure}
	\centering
	\includegraphics[scale=0.05]{oddcut-sample1.pdf}
	\caption{A large odd cutset in $\Z^2$.}
	\label{fig:odd cutset}
\end{figure}

An \emph{odd cutset} (or odd contour) is the edge boundary $\partial U$ of a domain $U\subset\Z^d$ which is either an even or an odd set; see Figure~\ref{fig:odd cutset}. Let $\bar{N}_\ell$ be the number of odd cutsets of length $|\partial U|=\ell$ with the origin in $U$. Roman Koteck\'y~\cite{kotecky2009} asked whether $\bar{N}_\ell$ is significantly smaller than $N_\ell$ in high dimensions (recall~\eqref{eq:bounds on the number of contours}); see also~\cite[Open question 10]{peled2010high}. This was addressed by Feldheim and the second author~\cite{feldheim2018growth} who showed that
\begin{equation}\label{eq:number of odd cutsets}
  2^{\left(1 + 2^{-2d}\right)\frac{\ell}{2d}}\le \bar{N}_\ell\le 2^{\big(1 + \tfrac{C\log^{3/2} d}{\sqrt{d}}\big)\frac{\ell}{2d}}
\end{equation}
for all dimensions $d\ge 2$ and all sufficiently large $\ell$ which are multiples of $2d$. The divisibility constraint is imposed as $\bar{N}_\ell = 0$ when $\ell$ is not a multiple of $2d$ (see,e.g.,~\cite[Lemma 1.3]{feldheim2018growth}). Thus, in high dimensions, the number of odd cutsets $\bar{N}_\ell$ grows roughly as $2^{\frac{\ell}{2d}}$ while the number of contours $N_\ell$ grows roughly at the much faster rate $\exp(C\frac{\ell}{d}\log d)$. However, comparing the bound~\eqref{eq:single droplet in ordered pattern odd q} to the number of odd cutsets~\eqref{eq:number of odd cutsets}, we see that even if the sum over contours in the Peierls argument is restricted to odd cutsets, the bound is again insufficient for the sum to converge, for any $q\ge 3$.

\begin{figure}[t]
  \centering
  \begin{tabular}{ccc}
    \begin{tikzpicture}

  \path[use as bounding box] (-0.250,-0.250) rectangle (7.000,7.000);

  \draw[line width = 0.2, color = black, densely dotted] (1.000,0.500) -- (1.000,0.250) -- (1.250,0.250) -- (1.250,0.500);
  \draw[line width = 0.2, color = black, densely dotted] (1.500,0.500) -- (1.500,0.250) -- (1.750,0.250) -- (1.750,0.500);
  \draw[line width = 0.2, color = black, densely dotted] (2.000,0.500) -- (2.000,0.250) -- (2.250,0.250) -- (2.250,0.500);
  \draw[line width = 0.2, color = black, densely dotted] (2.500,0.500) -- (2.500,0.250) -- (2.750,0.250) -- (2.750,0.500);
  \draw[line width = 0.2, color = black, densely dotted] (3.000,0.500) -- (3.000,0.250) -- (3.250,0.250) -- (3.250,0.500);
  \draw[line width = 0.2, color = black, densely dotted] (3.500,0.500) -- (3.500,0.250) -- (3.750,0.250) -- (3.750,0.500);
  \draw[line width = 0.2, color = black, densely dotted] (4.000,0.500) -- (4.000,0.250) -- (4.250,0.250) -- (4.250,0.500);
  \draw[line width = 0.2, color = black, densely dotted] (4.500,0.500) -- (4.500,0.250) -- (4.750,0.250) -- (4.750,0.500);
  \draw[line width = 0.2, color = black, densely dotted] (5.000,0.500) -- (5.000,0.250) -- (5.250,0.250) -- (5.250,0.500);
  \draw[line width = 0.2, color = black, densely dotted] (5.500,0.500) -- (5.500,0.250) -- (5.750,0.250) -- (5.750,0.500);
  \draw[line width = 0.2, color = black, densely dotted] (1.000,6.250) -- (1.000,6.500) -- (1.250,6.500) -- (1.250,6.250);
  \draw[line width = 0.2, color = black, densely dotted] (1.500,6.250) -- (1.500,6.500) -- (1.750,6.500) -- (1.750,6.250);
  \draw[line width = 0.2, color = black, densely dotted] (2.000,6.250) -- (2.000,6.500) -- (2.250,6.500) -- (2.250,6.250);
  \draw[line width = 0.2, color = black, densely dotted] (2.500,6.250) -- (2.500,6.500) -- (2.750,6.500) -- (2.750,6.250);
  \draw[line width = 0.2, color = black, densely dotted] (3.000,6.250) -- (3.000,6.500) -- (3.250,6.500) -- (3.250,6.250);
  \draw[line width = 0.2, color = black, densely dotted] (3.500,6.250) -- (3.500,6.500) -- (3.750,6.500) -- (3.750,6.250);
  \draw[line width = 0.2, color = black, densely dotted] (4.000,6.250) -- (4.000,6.500) -- (4.250,6.500) -- (4.250,6.250);
  \draw[line width = 0.2, color = black, densely dotted] (4.500,6.250) -- (4.500,6.500) -- (4.750,6.500) -- (4.750,6.250);
  \draw[line width = 0.2, color = black, densely dotted] (5.000,6.250) -- (5.000,6.500) -- (5.250,6.500) -- (5.250,6.250);
  \draw[line width = 0.2, color = black, densely dotted] (5.500,6.250) -- (5.500,6.500) -- (5.750,6.500) -- (5.750,6.250);
  \draw[line width = 0.2, color = black, densely dotted] (0.500,1.000) -- (0.250,1.000) -- (0.250,1.250) -- (0.500,1.250);
  \draw[line width = 0.2, color = black, densely dotted] (0.500,1.500) -- (0.250,1.500) -- (0.250,1.750) -- (0.500,1.750);
  \draw[line width = 0.2, color = black, densely dotted] (0.500,2.000) -- (0.250,2.000) -- (0.250,2.250) -- (0.500,2.250);
  \draw[line width = 0.2, color = black, densely dotted] (0.500,2.500) -- (0.250,2.500) -- (0.250,2.750) -- (0.500,2.750);
  \draw[line width = 0.2, color = black, densely dotted] (0.500,3.000) -- (0.250,3.000) -- (0.250,3.250) -- (0.500,3.250);
  \draw[line width = 0.2, color = black, densely dotted] (0.500,3.500) -- (0.250,3.500) -- (0.250,3.750) -- (0.500,3.750);
  \draw[line width = 0.2, color = black, densely dotted] (0.500,4.000) -- (0.250,4.000) -- (0.250,4.250) -- (0.500,4.250);
  \draw[line width = 0.2, color = black, densely dotted] (0.500,4.500) -- (0.250,4.500) -- (0.250,4.750) -- (0.500,4.750);
  \draw[line width = 0.2, color = black, densely dotted] (0.500,5.000) -- (0.250,5.000) -- (0.250,5.250) -- (0.500,5.250);
  \draw[line width = 0.2, color = black, densely dotted] (0.500,5.500) -- (0.250,5.500) -- (0.250,5.750) -- (0.500,5.750);
  \draw[line width = 0.2, color = black, densely dotted] (6.250,1.000) -- (6.500,1.000) -- (6.500,1.250) -- (6.250,1.250);
  \draw[line width = 0.2, color = black, densely dotted] (6.250,1.500) -- (6.500,1.500) -- (6.500,1.750) -- (6.250,1.750);
  \draw[line width = 0.2, color = black, densely dotted] (6.250,2.000) -- (6.500,2.000) -- (6.500,2.250) -- (6.250,2.250);
  \draw[line width = 0.2, color = black, densely dotted] (6.250,2.500) -- (6.500,2.500) -- (6.500,2.750) -- (6.250,2.750);
  \draw[line width = 0.2, color = black, densely dotted] (6.250,3.000) -- (6.500,3.000) -- (6.500,3.250) -- (6.250,3.250);
  \draw[line width = 0.2, color = black, densely dotted] (6.250,3.500) -- (6.500,3.500) -- (6.500,3.750) -- (6.250,3.750);
  \draw[line width = 0.2, color = black, densely dotted] (6.250,4.000) -- (6.500,4.000) -- (6.500,4.250) -- (6.250,4.250);
  \draw[line width = 0.2, color = black, densely dotted] (6.250,4.500) -- (6.500,4.500) -- (6.500,4.750) -- (6.250,4.750);
  \draw[line width = 0.2, color = black, densely dotted] (6.250,5.000) -- (6.500,5.000) -- (6.500,5.250) -- (6.250,5.250);
  \draw[line width = 0.2, color = black, densely dotted] (6.250,5.500) -- (6.500,5.500) -- (6.500,5.750) -- (6.250,5.750);

  \draw[line width = 0.01cm, color = black] (0.750,0.750) -- (0.750,0.500) -- (1.000,0.500)-- (1.000,0.750) -- (1.250,0.750) -- (1.250,0.500) -- (1.500,0.500)-- (1.500,0.750) -- (1.750,0.750) -- (1.750,0.500) -- (2.000,0.500)-- (2.000,0.750) -- (2.250,0.750) -- (2.250,0.500) -- (2.500,0.500)-- (2.500,0.750) -- (2.750,0.750) -- (2.750,0.500) -- (3.000,0.500)-- (3.000,0.750) -- (3.250,0.750) -- (3.250,0.500) -- (3.500,0.500)-- (3.500,0.750) -- (3.750,0.750) -- (3.750,0.500) -- (4.000,0.500)-- (4.000,0.750) -- (4.250,0.750) -- (4.250,0.500) -- (4.500,0.500)-- (4.500,0.750) -- (4.750,0.750) -- (4.750,0.500) -- (5.000,0.500)-- (5.000,0.750) -- (5.250,0.750) -- (5.250,0.500) -- (5.500,0.500)-- (5.500,0.750) -- (5.750,0.750) -- (5.750,0.500) -- (6.000,0.500) -- (6.000,0.750);

  \draw[line width = 0.01cm, color = black] (0.750,6.000) -- (0.750,6.250) -- (1.000,6.250)-- (1.000,6.000) -- (1.250,6.000) -- (1.250,6.250) -- (1.500,6.250)-- (1.500,6.000) -- (1.750,6.000) -- (1.750,6.250) -- (2.000,6.250)-- (2.000,6.000) -- (2.250,6.000) -- (2.250,6.250) -- (2.500,6.250)-- (2.500,6.000) -- (2.750,6.000) -- (2.750,6.250) -- (3.000,6.250)-- (3.000,6.000) -- (3.250,6.000) -- (3.250,6.250) -- (3.500,6.250)-- (3.500,6.000) -- (3.750,6.000) -- (3.750,6.250) -- (4.000,6.250)-- (4.000,6.000) -- (4.250,6.000) -- (4.250,6.250) -- (4.500,6.250)-- (4.500,6.000) -- (4.750,6.000) -- (4.750,6.250) -- (5.000,6.250)-- (5.000,6.000) -- (5.250,6.000) -- (5.250,6.250) -- (5.500,6.250)-- (5.500,6.000) -- (5.750,6.000) -- (5.750,6.250) -- (6.000,6.250) -- (6.000,6.000);

  \draw[line width = 0.01cm, color = black] (0.750,0.750) -- (0.500,0.750) -- (0.500,1.000)-- (0.750,1.000) -- (0.750,1.250) -- (0.500,1.250) -- (0.500,1.500)-- (0.750,1.500) -- (0.750,1.750) -- (0.500,1.750) -- (0.500,2.000)-- (0.750,2.000) -- (0.750,2.250) -- (0.500,2.250) -- (0.500,2.500)-- (0.750,2.500) -- (0.750,2.750) -- (0.500,2.750) -- (0.500,3.000)-- (0.750,3.000) -- (0.750,3.250) -- (0.500,3.250) -- (0.500,3.500)-- (0.750,3.500) -- (0.750,3.750) -- (0.500,3.750) -- (0.500,4.000)-- (0.750,4.000) -- (0.750,4.250) -- (0.500,4.250) -- (0.500,4.500)-- (0.750,4.500) -- (0.750,4.750) -- (0.500,4.750) -- (0.500,5.000)-- (0.750,5.000) -- (0.750,5.250) -- (0.500,5.250) -- (0.500,5.500)-- (0.750,5.500) -- (0.750,5.750) -- (0.500,5.750) -- (0.500,6.000) -- (0.750,6.000);

  \draw[line width = 0.01cm, color = black] (6.000,0.750) -- (6.250,0.750) -- (6.250,1.000)-- (6.000,1.000) -- (6.000,1.250) -- (6.250,1.250) -- (6.250,1.500)-- (6.000,1.500) -- (6.000,1.750) -- (6.250,1.750) -- (6.250,2.000)-- (6.000,2.000) -- (6.000,2.250) -- (6.250,2.250) -- (6.250,2.500)-- (6.000,2.500) -- (6.000,2.750) -- (6.250,2.750) -- (6.250,3.000)-- (6.000,3.000) -- (6.000,3.250) -- (6.250,3.250) -- (6.250,3.500)-- (6.000,3.500) -- (6.000,3.750) -- (6.250,3.750) -- (6.250,4.000)-- (6.000,4.000) -- (6.000,4.250) -- (6.250,4.250) -- (6.250,4.500)-- (6.000,4.500) -- (6.000,4.750) -- (6.250,4.750) -- (6.250,5.000)-- (6.000,5.000) -- (6.000,5.250) -- (6.250,5.250) -- (6.250,5.500)-- (6.000,5.500) -- (6.000,5.750) -- (6.250,5.750) -- (6.250,6.000) -- (6.000,6.000);

\end{tikzpicture} & \hspace{0.2in} & \begin{tikzpicture}

  \path[use as bounding box] (-0.250,-0.250) rectangle (7.000,7.000);

  \draw[line width = 0.01cm, color = black] (0.750,0.750) -- (0.750,0.500) -- (1.000,0.500)-- (1.000,0.250) -- (1.250,0.250) -- (1.250,0.500) -- (1.500,0.500)-- (1.500,0.750) -- (1.750,0.750) -- (1.750,0.500) -- (2.000,0.500)-- (2.000,0.750) -- (2.250,0.750) -- (2.250,0.500) -- (2.500,0.500)-- (2.500,0.250) -- (2.750,0.250) -- (2.750,0.500) -- (3.000,0.500)-- (3.000,0.750) -- (3.250,0.750) -- (3.250,0.500) -- (3.500,0.500)-- (3.500,0.250) -- (3.750,0.250) -- (3.750,0.500) -- (4.000,0.500)-- (4.000,0.750) -- (4.250,0.750) -- (4.250,0.500) -- (4.500,0.500)-- (4.500,0.250) -- (4.750,0.250) -- (4.750,0.500) -- (5.000,0.500)-- (5.000,0.250) -- (5.250,0.250) -- (5.250,0.500) -- (5.500,0.500)-- (5.500,0.250) -- (5.750,0.250) -- (5.750,0.500) -- (6.000,0.500) -- (6.000,0.750);
  \draw[line width = 0.2, color = black, densely dotted] (4.750,0.250) -- (4.750,0.000) -- (5.000,0.000) -- (5.000,0.250);
  \draw[line width = 0.2, color = black, densely dotted] (5.250,0.250) -- (5.250,0.000) -- (5.500,0.000) -- (5.500,0.250);
  \draw[line width = 0.01cm, color = black] (0.750,6.000) -- (0.750,6.250) -- (1.000,6.250)-- (1.000,6.000) -- (1.250,6.000) -- (1.250,6.250) -- (1.500,6.250)-- (1.500,6.000) -- (1.750,6.000) -- (1.750,6.250) -- (2.000,6.250)-- (2.000,6.000) -- (2.250,6.000) -- (2.250,6.250) -- (2.500,6.250)-- (2.500,6.500) -- (2.750,6.500) -- (2.750,6.250) -- (3.000,6.250)-- (3.000,6.000) -- (3.250,6.000) -- (3.250,6.250) -- (3.500,6.250)-- (3.500,6.500) -- (3.750,6.500) -- (3.750,6.250) -- (4.000,6.250)-- (4.000,6.000) -- (4.250,6.000) -- (4.250,6.250) -- (4.500,6.250)-- (4.500,6.000) -- (4.750,6.000) -- (4.750,6.250) -- (5.000,6.250)-- (5.000,6.500) -- (5.250,6.500) -- (5.250,6.250) -- (5.500,6.250)-- (5.500,6.500) -- (5.750,6.500) -- (5.750,6.250) -- (6.000,6.250) -- (6.000,6.000);
  \draw[line width = 0.2, color = black, densely dotted] (5.250,6.500) -- (5.250,6.750) -- (5.500,6.750) -- (5.500,6.500);
  \draw[line width = 0.01cm, color = black] (0.750,0.750) -- (0.500,0.750) -- (0.500,1.000)-- (0.250,1.000) -- (0.250,1.250) -- (0.500,1.250) -- (0.500,1.500)-- (0.750,1.500) -- (0.750,1.750) -- (0.500,1.750) -- (0.500,2.000)-- (0.750,2.000) -- (0.750,2.250) -- (0.500,2.250) -- (0.500,2.500)-- (0.250,2.500) -- (0.250,2.750) -- (0.500,2.750) -- (0.500,3.000)-- (0.750,3.000) -- (0.750,3.250) -- (0.500,3.250) -- (0.500,3.500)-- (0.250,3.500) -- (0.250,3.750) -- (0.500,3.750) -- (0.500,4.000)-- (0.750,4.000) -- (0.750,4.250) -- (0.500,4.250) -- (0.500,4.500)-- (0.250,4.500) -- (0.250,4.750) -- (0.500,4.750) -- (0.500,5.000)-- (0.250,5.000) -- (0.250,5.250) -- (0.500,5.250) -- (0.500,5.500)-- (0.750,5.500) -- (0.750,5.750) -- (0.500,5.750) -- (0.500,6.000) -- (0.750,6.000);
  \draw[line width = 0.2, color = black, densely dotted] (0.250,4.750) -- (0.000,4.750) -- (0.000,5.000) -- (0.250,5.000);
  \draw[line width = 0.01cm, color = black] (6.000,0.750) -- (6.250,0.750) -- (6.250,1.000)-- (6.500,1.000) -- (6.500,1.250) -- (6.250,1.250) -- (6.250,1.500)-- (6.500,1.500) -- (6.500,1.750) -- (6.250,1.750) -- (6.250,2.000)-- (6.500,2.000) -- (6.500,2.250) -- (6.250,2.250) -- (6.250,2.500)-- (6.000,2.500) -- (6.000,2.750) -- (6.250,2.750) -- (6.250,3.000)-- (6.500,3.000) -- (6.500,3.250) -- (6.250,3.250) -- (6.250,3.500)-- (6.000,3.500) -- (6.000,3.750) -- (6.250,3.750) -- (6.250,4.000)-- (6.500,4.000) -- (6.500,4.250) -- (6.250,4.250) -- (6.250,4.500)-- (6.500,4.500) -- (6.500,4.750) -- (6.250,4.750) -- (6.250,5.000)-- (6.000,5.000) -- (6.000,5.250) -- (6.250,5.250) -- (6.250,5.500)-- (6.000,5.500) -- (6.000,5.750) -- (6.250,5.750) -- (6.250,6.000) -- (6.000,6.000);
  \draw[line width = 0.2, color = black, densely dotted] (6.500,1.250) -- (6.750,1.250) -- (6.750,1.500) -- (6.500,1.500);
  \draw[line width = 0.2, color = black, densely dotted] (6.500,1.750) -- (6.750,1.750) -- (6.750,2.000) -- (6.500,2.000);
  \draw[line width = 0.2, color = black, densely dotted] (6.500,4.250) -- (6.750,4.250) -- (6.750,4.500) -- (6.500,4.500);

\end{tikzpicture}
  \end{tabular}
  \caption{Odd cutsets approximating the boundary of a cube. On the left, every second boundary vertex can be ``pushed out'' independently of other vertices, yielding, in $d$ dimensions, $2^{\left(\frac{1}{2d}-o_d(1)\right)\ell}$ odd cutsets with $\ell$ edges, as $\ell\to\infty$. On the right, for every odd cutset obtained in such way, we have
the additional option of independently ``pushing out'' vertices all
of whose $2d-2$ neighbors were ``pushed out'' in the first stage.
Combining these two stages leads to the lower bound in~\eqref{eq:number of odd cutsets}. Figure taken from~\cite{peled2014odd}.}
  \label{fig:cubes}
\end{figure}

The fact that the number of odd cutsets of given length grows significantly more slowly than the number of contours of the same length is indicative of a deeper structural difference. Typical odd cutsets have been shown to have a \emph{macroscopic shape} or \emph{approximation} (e.g., the boundary of an axis-parallel box; see Figure~\ref{fig:cubes}) from which they deviate on the microscopic scale, while general contours should scale to integrated super-Brownian excursion~\cite{lubensky1979statistics, slade1999lattice}. The distinction between these very different behaviors is akin to the breathing transition undergone by random surfaces~\cite[Section~7.3]{fernandez2013random}. This phenomenon was first used by Sapozhenko in studying enumeration problems on bipartite graphs and posets~\cite{sapozhenko1987onthen,sapozhenko1989number,sapozhenko1991number} and has been exploited in several works~\cite{Galvin2003hammingcube,galvin2004slow,galvin2004phase,galvin2007sampling,galvin2007torpid,galvin2008sampling,peled2010high,galvin2012phase,peled2014odd, feldheim2015long, peled2018rigidity, peledspinka2018spin} to provide a natural \emph{coarse-graining} scheme for odd cutsets, grouping them according to their approximation, and noting that the number of such approximations is significantly smaller in high dimensions (of order at most $\exp\big(\big(\tfrac{\log d}{d}\big)^{3/2} \ell\big)$) than the number of odd cutsets themselves.

The version of the Peierls argument used in the proof of Theorem~\ref{thm:long-range-order} also makes use of the above-mentioned coarse graining scheme. To allow this, ordered regions of the coloring are defined in such a way that they are always even or odd sets. Then, the third step of the Peierls argument is performed by summing over approximations to the odd cutsets instead of on the cutsets themselves. This twist complicates also the second step of the Peierls argument, as it necessitates that the bound obtained on the probability that a given contour is a domain wall (or a disordered region, or a region of overlap), will be extended to the case when only an approximation to the contour is prescribed. Approximations to odd cutsets are further discussed in Section~\ref{sec:approx}.

\subsection{Ordered and disordered regions}
\label{sec:ordered-regions}

Given a proper $q$-coloring $f$ of $\Z^d$, we wish first to identify regions where $f$ follows, in a suitable sense, a dominant pattern. A first idea is that the decision regarding a vertex $v$ will be made based on the values that $f$ takes on the \emph{neighbors} of $v$. Indeed, the color that $v$ takes cannot itself be sufficient as it has only $q$ options whereas there are many more dominant patterns, but the colors of the neighbors turn out to suit the job. A second idea, motivated by the toy scenario described earlier and also by questions of approximation of contours which will be soon described, is that each region will be a (regular) even or odd set. More precisely, the region associated with a dominant pattern $(A,B)$ is an even set if $|A|\le|B|$ and an odd set if $|A|>|B|$ (thus odd sets appear only if $q$ is odd). Let us now describe the regions precisely. Let $\phasedom$ be the set of all dominant patterns. For each $P=(A,B)\in\phasedom$, define the terms
\begin{equation}\label{eq:P-even-odd}
  \text{$P$-even} = \begin{cases}
    \text{even} & |A|\le |B|\\
    \text{odd} & |A|>|B|
  \end{cases}\qquad\text{and similarly}\qquad \text{$P$-odd} = \begin{cases}
    \text{odd} & |A|\le |B|\\
    \text{even} & |A|>|B|
  \end{cases}.
\end{equation}
Thus, for instance, if $|A|\le |B|$ then even vertices (having even sum of coordinates) are $P$-even and odd vertices are $P$-odd. The region associated to $P$ is denoted $Z_P(f)$ and defined by
\begin{equation}\label{eq:Z-def}
Z_P = Z_P(f) := \big\{ v \in \Z^d : v\text{ is $P$-odd},~ N(v)\text{ is in the $P$-pattern} \big\}^+.
\end{equation}
Figure~\ref{fig:breakup} depicts these sets in examples.
For technical reasons, only $P$-odd vertices whose neighbors are in the $P$-pattern are included in $Z_P$, and then $Z_P$ is taken to be the smallest $P$-even set containing them. Note that a $P$-odd vertex in $Z_P$ is not itself required to be in the $P$-pattern, whereas a $P$-even vertex in $Z_P$ is necessarily in the $P$-pattern, but need not have its neighbors in the $P$-pattern. In addition, there may be $P$-even vertices which are not in $Z_P$ although their neighbors are in the $P$-pattern. These somewhat undesirable consequences of our definition are allowed in order to ensure that $Z_P$ is a regular $P$-even set, which will be important in the proof.

\begin{figure}
	\centering
	\includegraphics[scale=0.355]{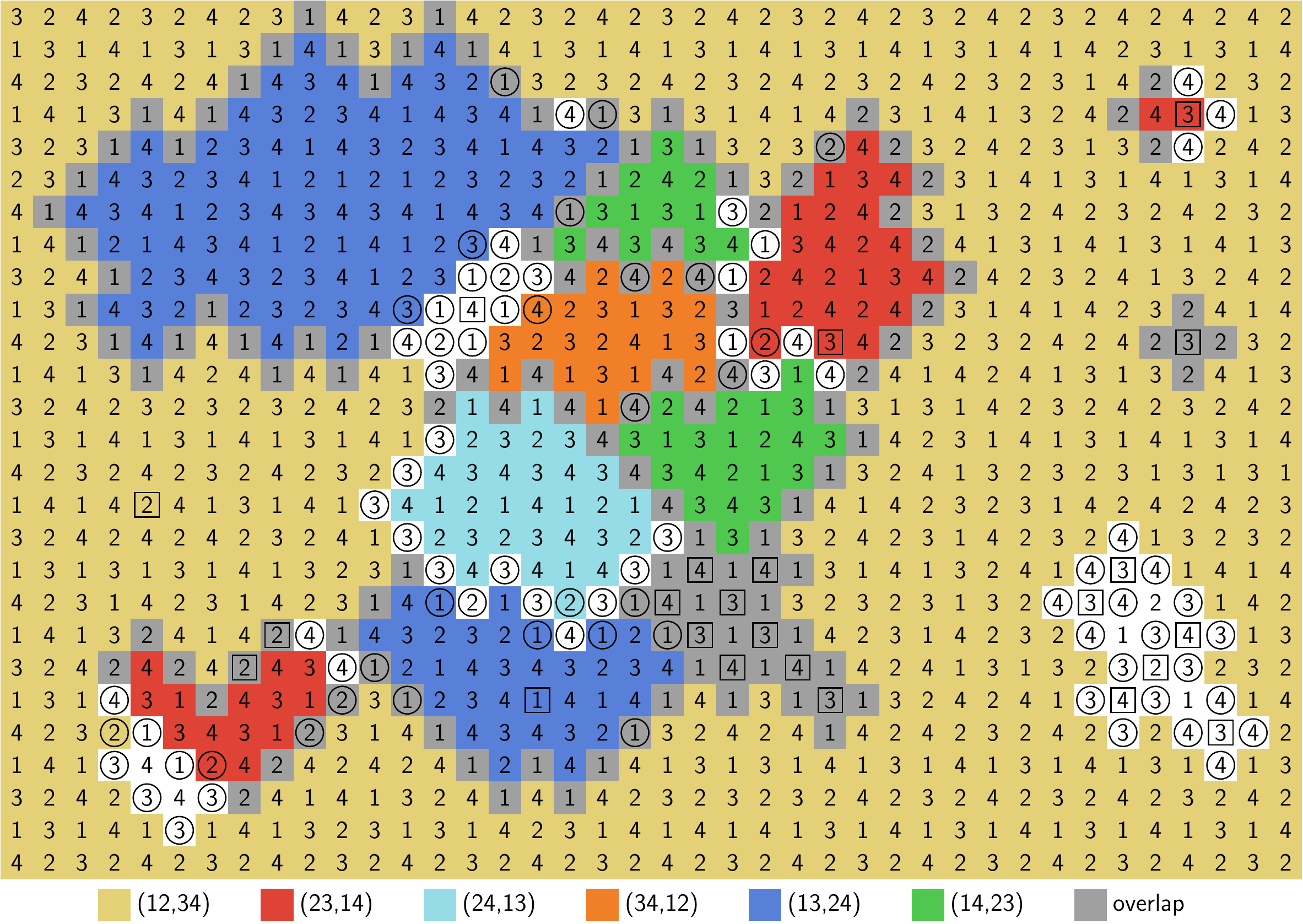}\vspace{4pt}
	
	\includegraphics[scale=0.355]{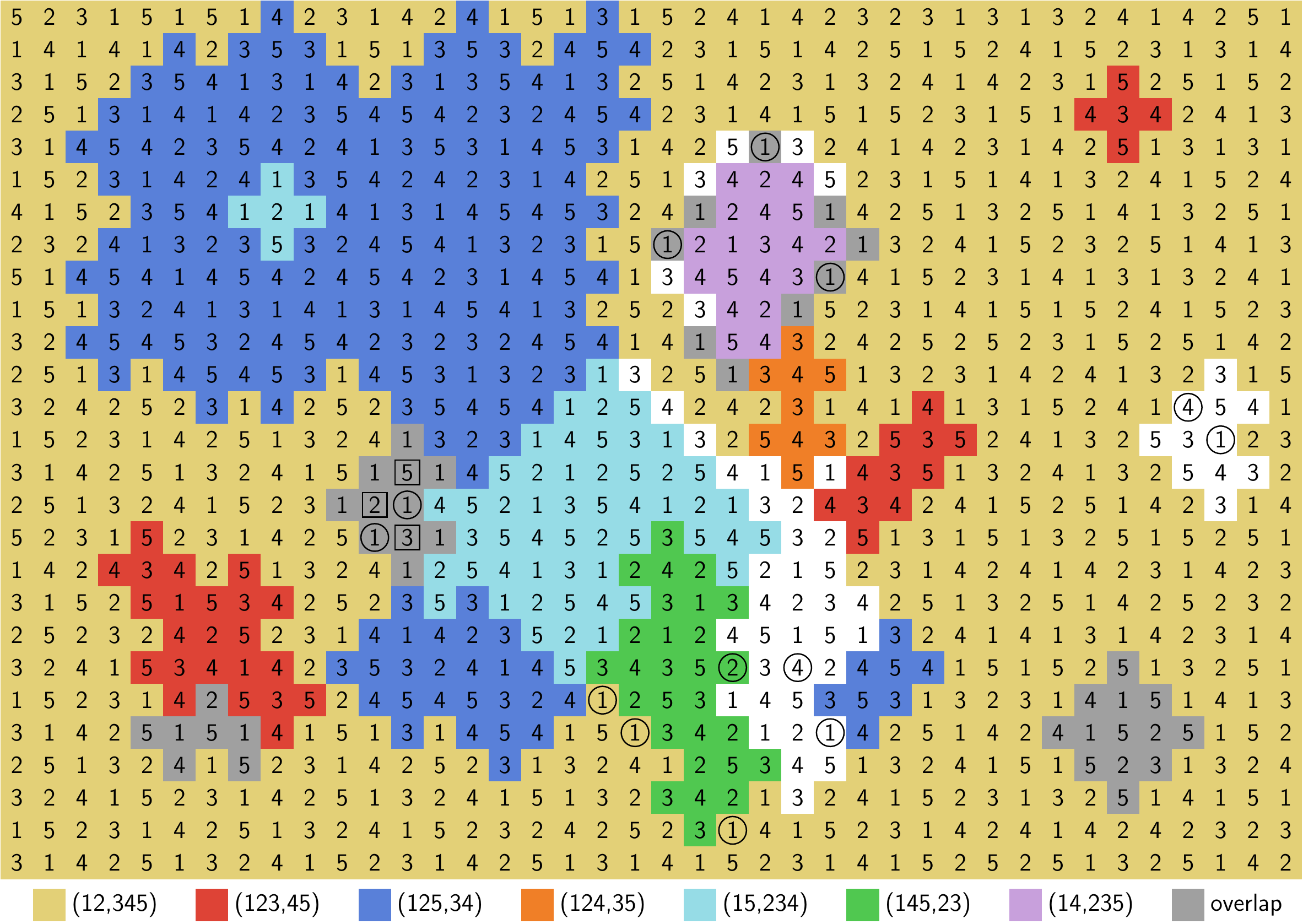}
	\captionsetup{width=0.95\textwidth, font=small}
	\caption{Proper $q$-colorings (top: $q=4$, bottom: $q=5$) and the associated identification of ordered and disordered regions (each $Z_P$ has a different color, with gray indicating $Z_\overlap$ and white indicating $Z_\bad$). Non-dominant vertices (defined in Section~\ref{sec:upper_bounds_closer_look}) are depicted: squares indicate vertices whose neighbors are assigned less than $\lfloor \frac q2 \rfloor$ different values, whereas circles indicate vertices whose neighbors are assigned more than $\lceil \frac q2 \rceil$ different values.}
	\label{fig:breakup}
\end{figure}

Having defined the regions $(Z_P)$, let us examine more closely their inter-relations. It is possible for a vertex $v$ to belong to two (or more) of the $Z_P$ and also possible that it lies outside all of the $Z_P$. These possibilities are captured by the following definitions:
\[ Z_\overlap := \bigcup_{P \neq Q} (Z_P \cap Z_Q) \qquad\text{and}\qquad Z_\bad := \bigcap_P (Z_P)^c \]
(see Figure~\ref{fig:breakup}). Regions of this type, along with the boundaries of $Z_P$, are regions where the coloring $f$ does not achieve its maximal entropy per vertex, in a way which is quantified later. It will be our task to prove that such regions are not numerous and this will lead to a proof of Theorem~\ref{thm:long-range-order}. To this end, we define
\begin{equation}\label{eq:Z_*-def}
Z_* := \bigcup_P \intextB Z_P \cup Z_\overlap \cup Z_\bad .
\end{equation}

The region $Z_*$ plays a similar role in our analysis as the contours used in arguments of the Peierls or Pirogov-Sinai type.
Recall that in the Ising model, a configuration may have many domain walls and that long-range order (Theorem~\ref{thm:low temperature order Ising}) was shown there by focusing on a single contour surrounding a given vertex (see~\eqref{eq:Ising probability of excitation}). Here too, we would like to isolate a single ``contour'' from within $Z_*$ which ``surrounds'' a given vertex $v$. We call this a breakup seen from $v$, which we explain further in the following section.

\subsection{The unlikeliness of breakups}
\label{sec:proof-overview-breakup}
With Theorem~\ref{thm:long-range-order} in mind, let $f$ be sampled from $\Pr_{\Lambda,P_0}$ and fix a vertex $v\in\Lambda$. It is convenient to extend $f$ to a coloring of $\Z^d$ by coloring vertices of $\Lambda^c$ independently and uniformly from $A_0$ or $B_0$ according to their parity (so that they are in the $P_0$-pattern). The collection $(Z_P)_P$ then identifies ordered and disordered regions in $f$. Our goal is to show that $v$ is typically in the $P_0$-pattern.
One checks that $Z_{P} \setminus Z_{\overlap}$ is in the $P$-pattern, and therefore it suffices to show that, with high probability, $Z_{P_0}$ is the unique set among $(Z_P)_P$ to which $v$ belongs. This, in turn, follows by showing that there is a path from $v$ to infinity avoiding $Z_*$. If no such path exists, there needs to be a \emph{connected} component of $Z_*^+$ which disconnects~$v$ from infinity. Our focus is then on these connected components and this motivates the following notion of a breakup seen from $v$, which encodes the partial information from $(Z_P)_P$ relevant to these components.

\begin{figure}
	\centering
	\includegraphics[scale=0.204]{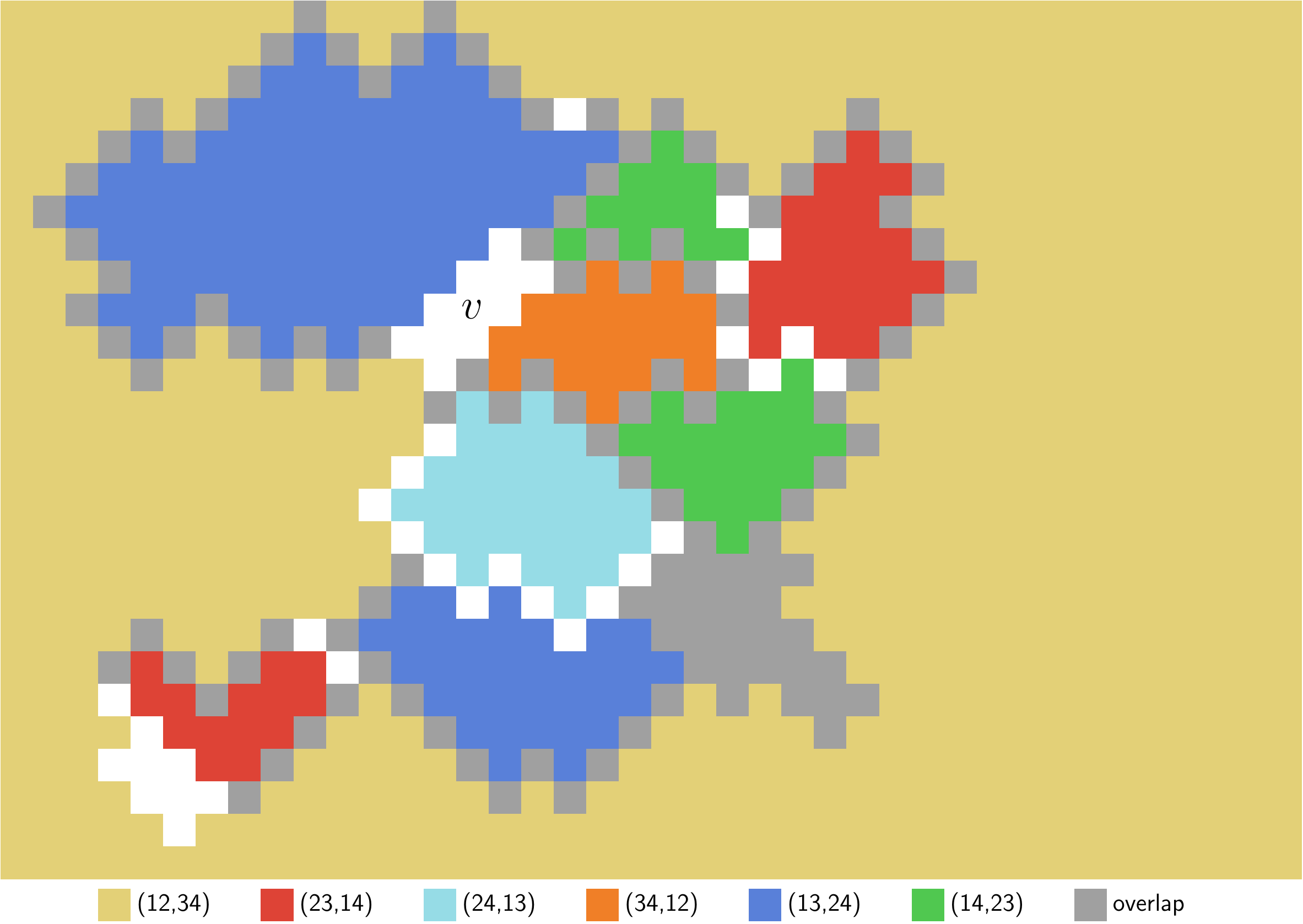}~
	\includegraphics[scale=0.204]{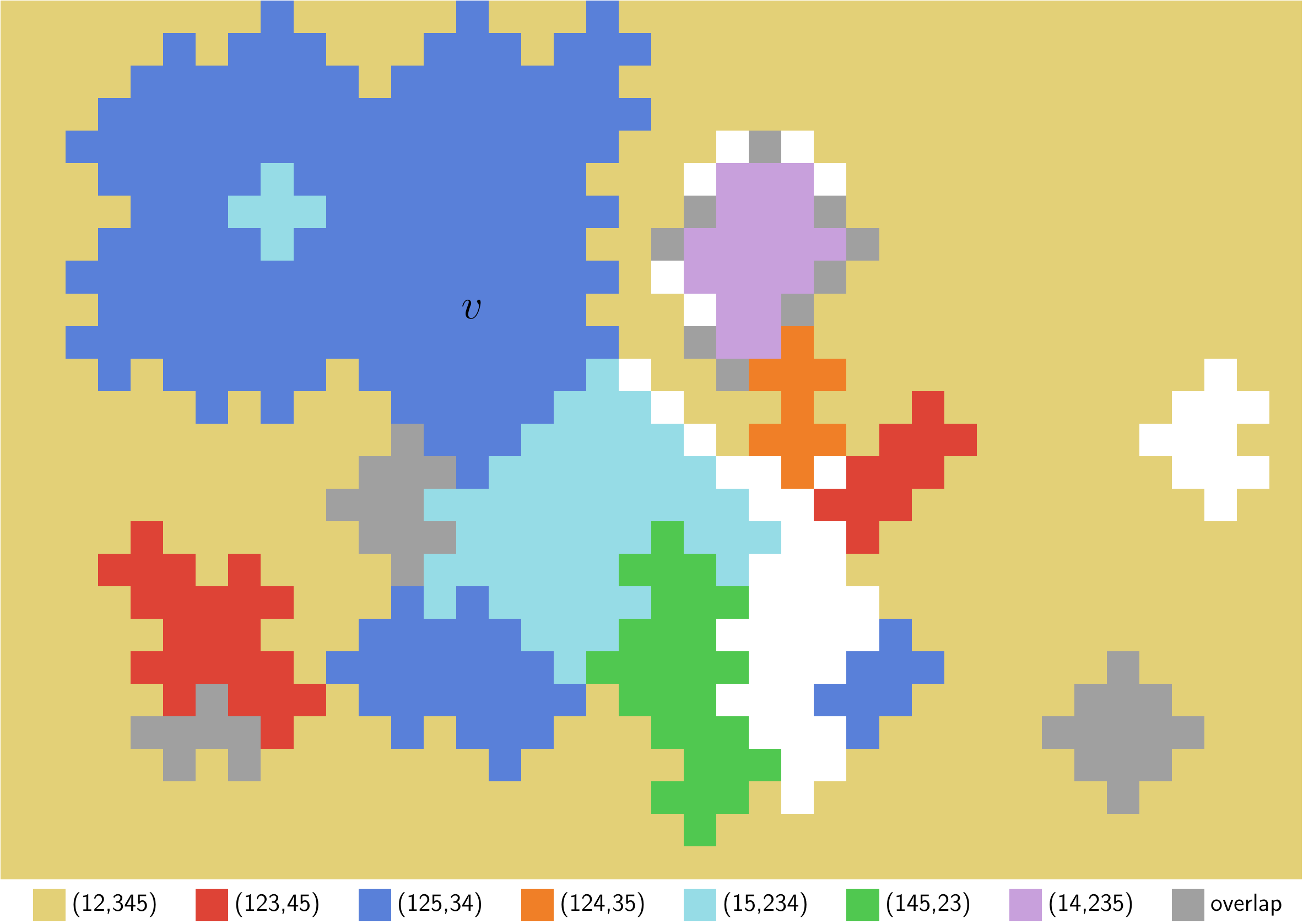}
	\captionsetup{width=0.95\textwidth, font=small}
	\caption{Breakups seen from $v$ of the colorings shown in Figure~\ref{fig:breakup} (left: $q=4$, right: $q=5$). Such breakups are not unique; for instance, the $q=5$ case may further include the small violation of the boundary pattern in the top-right corner of Figure~\ref{fig:breakup} (bottom).
Each $X_P$ has a different color, with gray indicating $X_\overlap$ and white indicating $X_\bad$ (the information of a breakup also includes the classification of the region $X_\overlap$ into various $X_P$, though this is not depicted in the figure).}
	\label{fig:breakup2}
\end{figure}

A \emph{breakup} is a collection $(X_P)_{P\in\phasedom}$ of subsets of $\Z^d$, from which one defines $X_\overlap,X_\bad,X_*$ in the same manner as $Z_\overlap,Z_\bad,Z_*$ is defined from $(Z_P)_P$, with the property that the $(X_P)_P$ coincide with the $(Z_P)_P$ in the neighborhood of $X_*$ in the sense that $X_P \cap X_*^{+5} = Z_P \cap X_*^{+5}$ for each $P$. The definition implies that each $X_P$ is a regular $P$-even set, a property important for the approximations described in the section below. A breakup is \emph{seen from $v$} if $X_*^{+5}$ is composed of a connected component of $Z_*^{+5}$ which disconnects $v$ from infinity. The choice to consider connected components of $Z_*^{+5}$ rather than just connected components of $Z_*^+$ is related to the fact that this implies that near $X_*$ (in its $5$-neighborhood) there are no additional violations of the pure dominant pattern coloring. This will be convenient in the proof (though the specific number $5$ is not important and could just as well be taken larger).
 Figure~\ref{fig:breakup2} shows possible breakups seen from $v$.
 
A ``breakup seen from $v$'' is the analogue of a ``domain wall surrounding $v$'' in the Ising model. As explained above, to obtain Theorem~\ref{thm:long-range-order}, it suffices to bound the probability that there exists a breakup seen from $v$ (namely, to bound it by the right-hand-side of~\eqref{eq:main_thm_bound}). The first goal toward this is to show that any given collection $(X_P)_P$ is unlikely to be a breakup.

We proceed to explain, for a given collection $X=(X_P)_P$, how to bound the probability that $X$ is a breakup.
To state a precise bound, we must first explain how to measure the ``size'' of a breakup. While in the Ising model, the size of a contour was given by a single number, namely its length, here the size of a breakup is described by three numbers, one for each ingredient comprising $X_*$ (recall~\eqref{eq:Z_*-def}). Specifically, denote
\begin{equation}\label{eq:breakup-size}
L:=\Big|\bigcup_P \partial X_P\Big|,\qquad M:=|X_\overlap|, \qquad N:=|X_\bad| .
\end{equation}
We emphasize the $L$ is defined in terms of the size of the edge-boundaries of $X_P$, not their vertex boundaries.
The goal is then to prove the following quantitative bound (which is the analogue of~\eqref{eq:ising-domain-wall-bound} in the Ising model):
\begin{equation}\label{eq:breakup-prob-bound}
\Pr_{\Lambda,P_0}(X\text{ is a breakup}) \le \exp\left(- \tfrac cq \big( \tfrac Ld+\tfrac Mq+\tfrac{N}{q^2} \big)\right) ,
\end{equation}
where $c>0$ is a universal constant. The reader may wish to compare this bound to the bound~\eqref{eq:single droplet in ordered pattern odd q} obtained in the toy scenario.
In the full proof, the arguments need to be adapted to the case that only an approximation of $X$ is given rather than $X$ itself, but this adaptation is not the essence of the argument so our focus in the overview is on the case that $X$ is given.

\subsubsection{The repair transformation}\label{sec:repair transformation}
Let $\Omega_X$ be the set of proper colorings for which $X$ is a breakup.
To establish the desired bound on $\Pr_{\Lambda,P_0}(\Omega_X)$, we apply the following one-to-many operation to every coloring $f \in \Omega_X$:
\begin{enumerate}[(i)]
 \itemsep0em
 \item Erase the colors at all vertices of $X_*$.
 \item For each connected component $D$ of $X_P\setminus X_*$, apply a permutation $\varphi$ taking $P$ to $P_0$ to the colors of $f$ on $D$, and also, if $P=(A,B)$ is such that $|A|>|B|$, then shift the configuration in $D$ by a single lattice site in the $(1,0,\ldots,0)$ direction~(such a shift was first used by Dobrushin for the hard-core model~\cite{dobrushin1968problem}).
 \item Fill colors following the $P_0$-pattern in all remaining vertices.
\end{enumerate}
See Figure~\ref{fig:repairmap} for an illustration.

Noting that the resulting configuration is always a proper coloring, and that no entropy is lost in step (ii), it remains to show that the entropy gain in step (iii) is much larger than the entropy loss in step (i). The gain in step (iii) is either $\log \lfloor \tfrac{q}{2} \rfloor$ or $\log \lceil \tfrac{q}{2} \rceil$ per vertex according to its parity, making the entropy gain an easily computable quantity. The main challenge is thus to bound the loss in step (i), and the method used for its resolution is described in the next three sections.

\begin{figure}
	\centering
	\captionsetup{font=small}
	\begin{subfigure}[t]{0.5\textwidth}
		\includegraphics[scale=0.204]{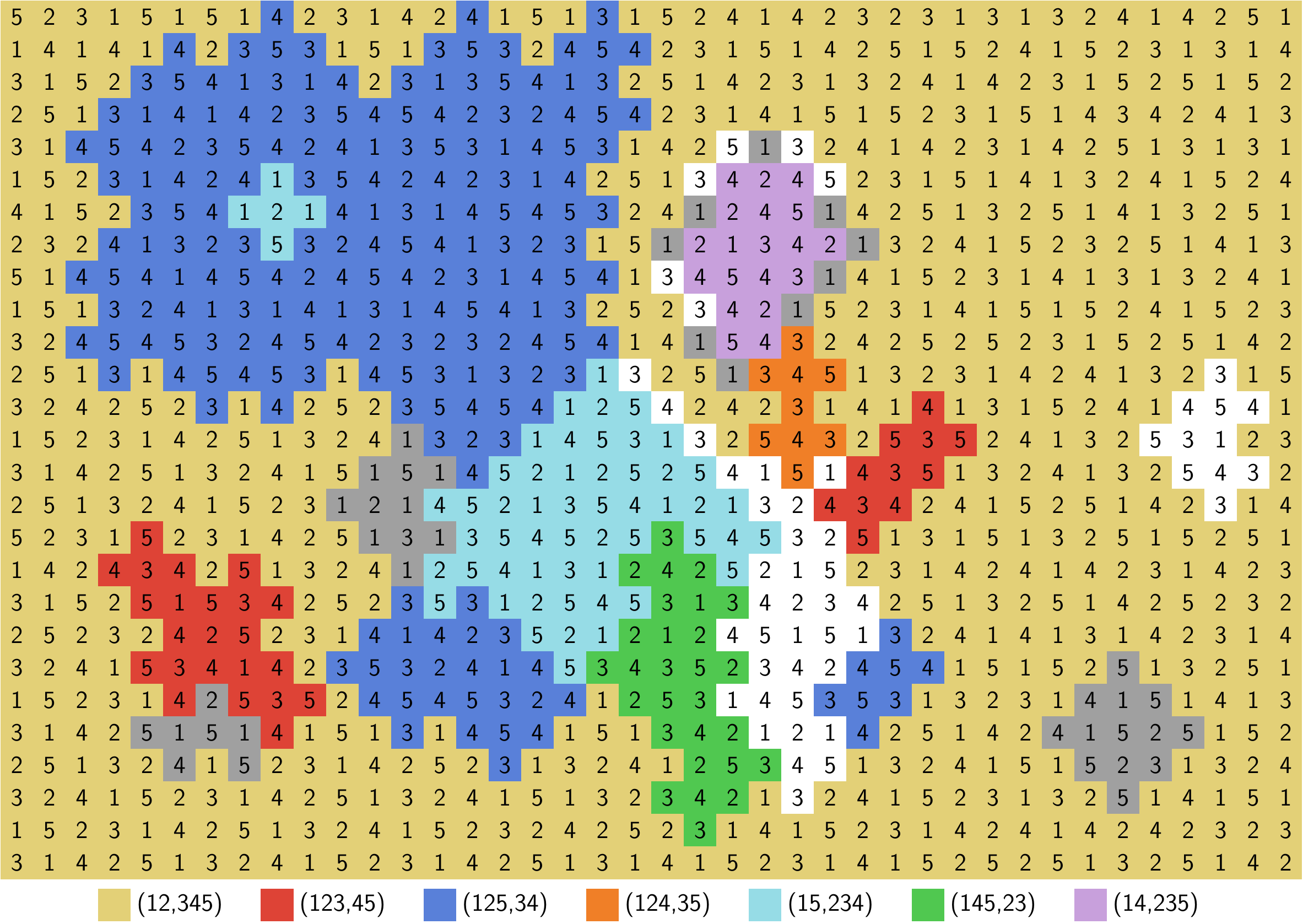}
		\caption{A coloring having a breakup $X$.}
	\end{subfigure}\,\,%
	\begin{subfigure}[t]{0.5\textwidth}
		\includegraphics[scale=0.204]{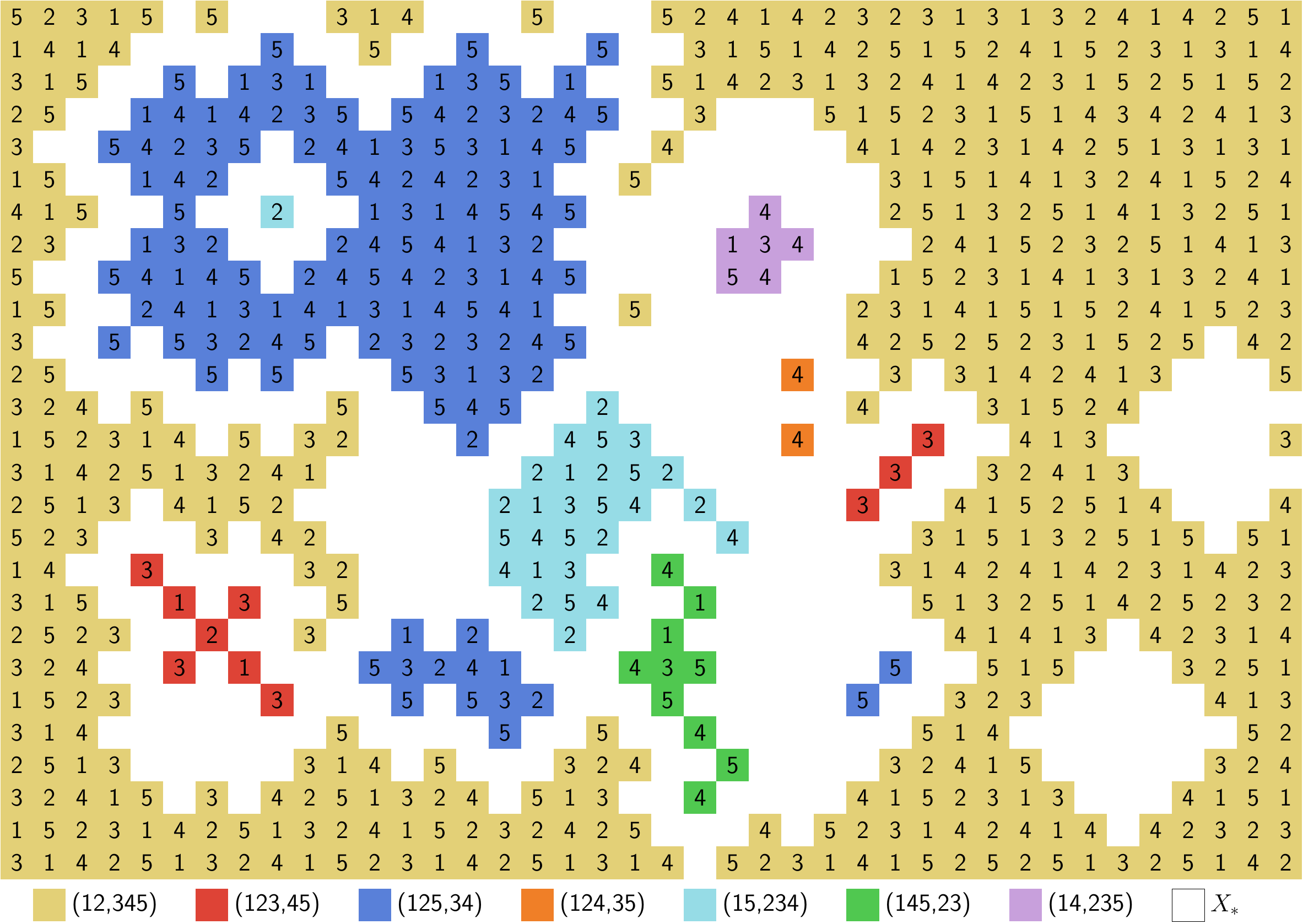}
		\caption{Step (i): colors in $X_*$ are erased.}
	\end{subfigure}
	\vspace{5pt}
	
	\begin{subfigure}[t]{0.5\textwidth}
		\includegraphics[scale=0.204]{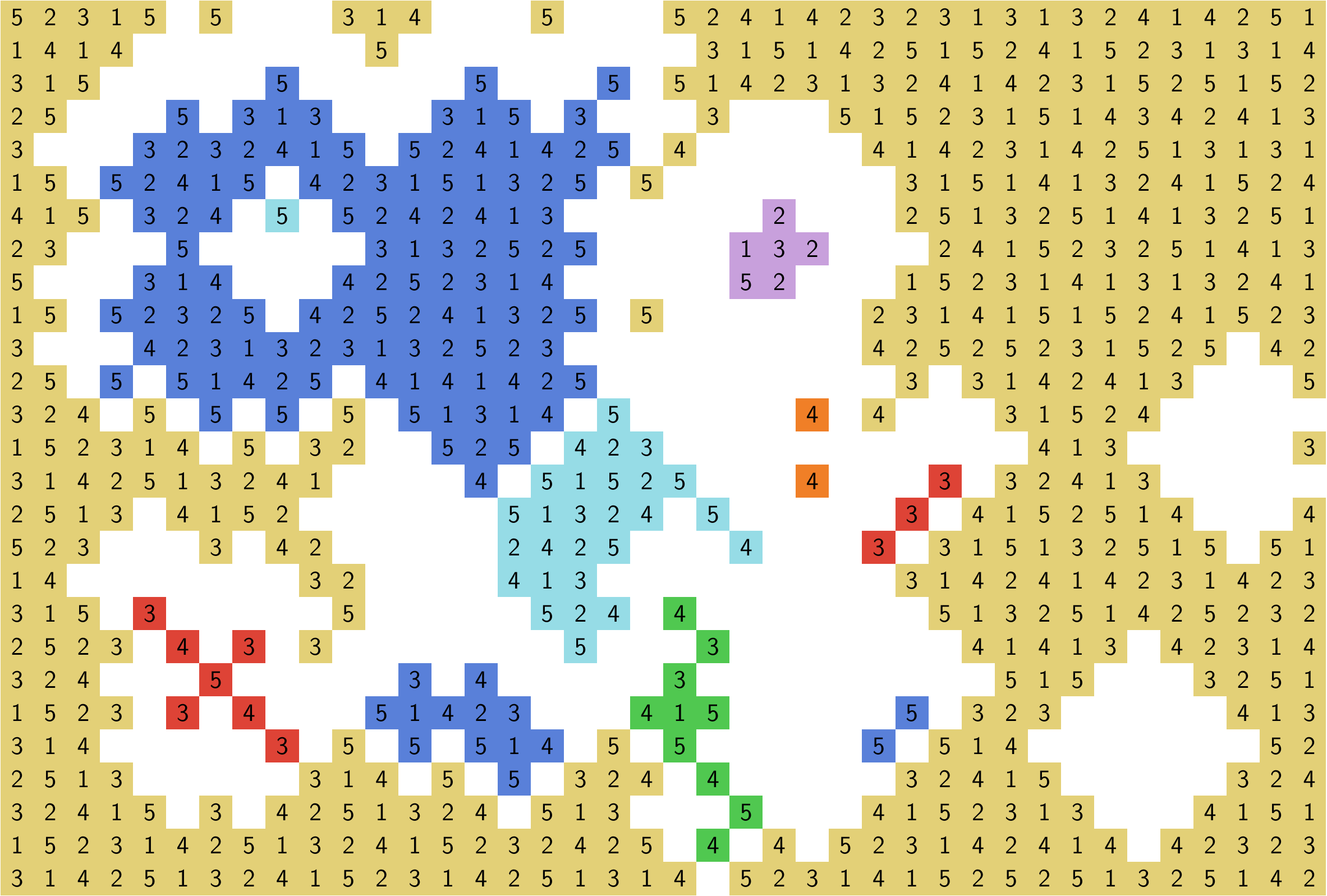}
		\caption{Step (ii): colors in $X_P$ are permuted and shifted.}
	\end{subfigure}\,\,%
	\begin{subfigure}[t]{0.51\textwidth}
		\includegraphics[scale=0.204]{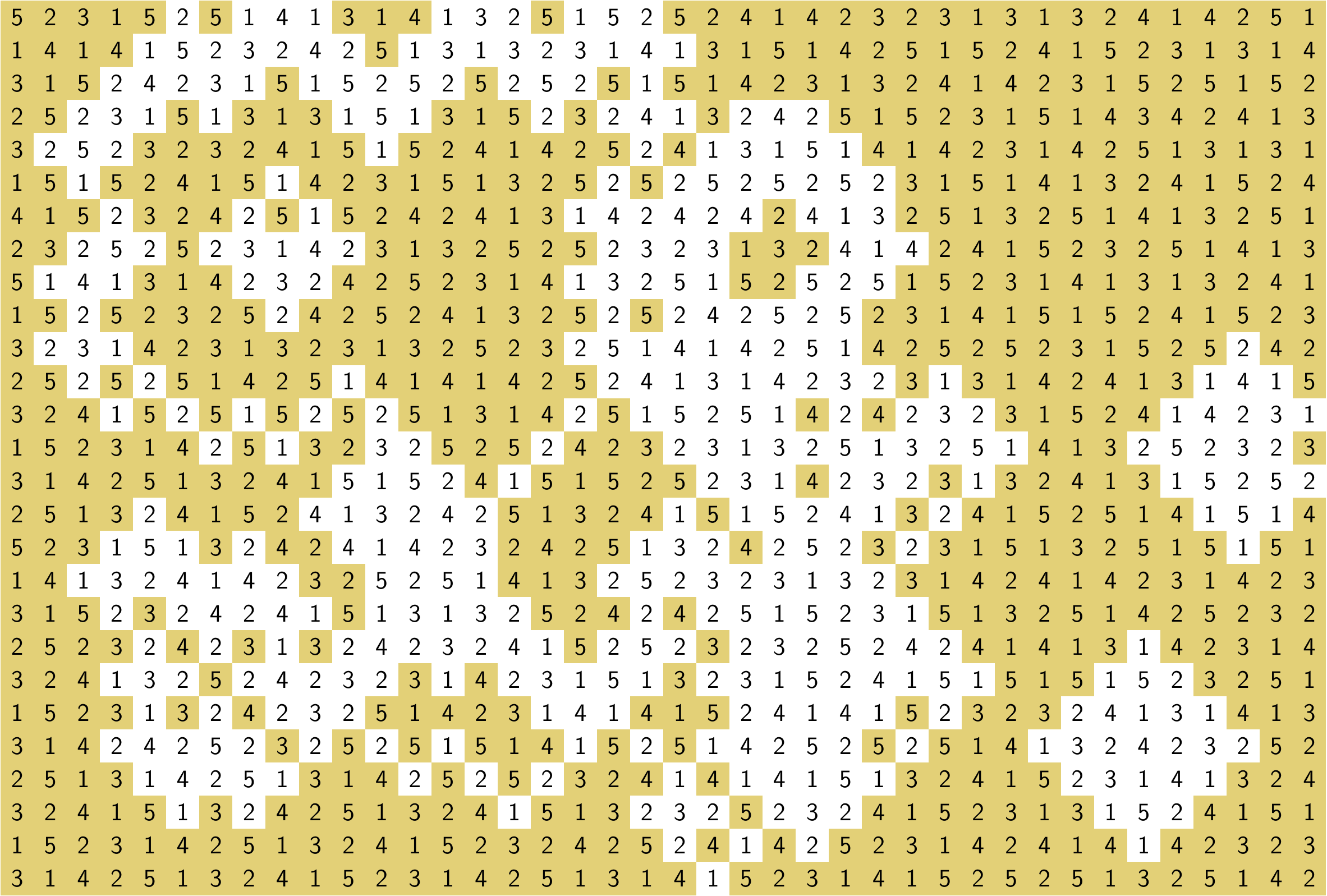}
		\caption{Step (iii): empty sites are colored in the $P_0$-pattern.}
	\end{subfigure}
	\captionsetup{font=normal}
	\caption{The repair transformation applied to the $5$-coloring of Figure~\ref{fig:breakup} with the breakup of Figure~\ref{fig:breakup2}.}
	\label{fig:repairmap}
\end{figure}

\subsubsection{Entropy methods}\label{sec:entropy}

Galvin--Tetali~\cite{galvin2004weighted}, following Kahn--Lawrentz~\cite{kahn1999generalized} and Kahn~\cite{kahn2001entropy}, found a simple and powerful bound on the number of proper $q$-colorings, and more generally graph homomorphisms, on regular bipartite graphs. The bound uses entropy methods, or more specifically, Shearer's inequality~\cite{chung1986some}. We briefly recall the definition of Shannon entropy and some of its basic properties (see, e.g., \cite{mceliece2002theory} for a more thorough discussion).

Let $Z$ be a discrete random variable.
The \emph{Shannon entropy} of $Z$ is
\[ \Ent(Z) := -\sum_z \Pr(Z=z) \log \Pr(Z=z) ,\]
where we use the convention that such sums are always over the support of the random variable in question. Given an event $A$, the conditional entropy $\Ent(Z \mid A)$ is simply the entropy of the random variable $Z$ obtained by conditioning on $A$.
Given another discrete random variable $Y$, the conditional entropy of $Z$ given $Y$ is then defined as $\Ent(Z \mid Y) := \E[\Ent(Z \mid Y=y)]$.
This gives rise to the following chain rule:
\begin{equation}\label{eq:entropy-chain-rule}
\Ent(Y,Z) = \Ent(Y) + \Ent(Z \mid Y) ,
\end{equation}
where $\Ent(Y,Z)$ is shorthand for the entropy of $(Y,Z)$.
A simple application of Jensen's inequality shows that $\Ent(Z) \le \log |\supp Z|$, where $\supp Z$ is the support of $Z$, with equality if and only if $Z$ is a uniform random variable. Another application of Jensen's inequality gives the useful property that $\Ent(Z \mid Y) \le \Ent(Z \mid \phi(Y))$ for any function $\phi$.
This, together with the chain rule, implies that entropy is subadditive. That is, if $Z_1,\dots,Z_n$ are discrete random variables, then
\begin{equation}\label{eq:entropy-subadditivity}
\Ent(Z_1,\dots,Z_n) \le \Ent(Z_1) + \cdots + \Ent(Z_n) .
\end{equation}
We now state Shearer's inequality, which is a powerful extension of this subadditivity.

\begin{lemma}[Shearer's inequality~\cite{chung1986some}]\label{lem:shearer}
Let $Z_1,\dots,Z_n$ be discrete random variables. Let $\cI$ be a collection of subsets of $\{1,\dots,n\}$ such that $|\{I \in \cI : i \in I\}| \ge k$ for some $k \ge 1$ and every~$1 \le i \le n$.
Then
\[ \Ent(Z_1,\dots,Z_n) \le \frac{1}{k} \sum_{I \in \cI} \Ent((Z_i)_{i\in I}) .\]
\end{lemma}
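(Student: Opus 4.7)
The plan is to prove Shearer's inequality by applying the chain rule~\eqref{eq:entropy-chain-rule} iteratively, combined with the monotonicity of conditional entropy under refinement of the conditioning. Recall the two facts from the paragraph preceding the lemma: iterating~\eqref{eq:entropy-chain-rule} gives, for any tuple $(W_1,\dots,W_m)$, the expansion $\Ent(W_1,\dots,W_m) = \sum_{j=1}^m \Ent(W_j \mid W_1,\dots,W_{j-1})$; and the Jensen-based bound $\Ent(Z\mid Y) \le \Ent(Z\mid \phi(Y))$ implies that conditioning on fewer variables can only increase the conditional entropy.

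The first step is to fix an ordering of $\{1,\dots,n\}$ (say the natural one) and expand both sides of the target inequality using the chain rule relative to this order. On the left-hand side this gives
\begin{equation*}
  \Ent(Z_1,\dots,Z_n) = \sum_{i=1}^n \Ent(Z_i \mid Z_1,\dots,Z_{i-1}).
\end{equation*}
On the right-hand side, for each $I \in \cI$, listing its elements in increasing order as $i_1<\cdots<i_m$, the chain rule yields
\begin{equation*}
  \Ent((Z_i)_{i\in I}) = \sum_{j=1}^m \Ent\bigl(Z_{i_j} \mid (Z_{i_\ell})_{\ell<j}\bigr).
\end{equation*}

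The second step is to compare termwise. For each $I\in\cI$ and $i\in I$, the conditioning variables $(Z_\ell)_{\ell\in I,\,\ell<i}$ are a sub-tuple of $(Z_1,\dots,Z_{i-1})$, so the monotonicity-under-refinement property gives
\begin{equation*}
  \Ent\bigl(Z_i \mid (Z_\ell)_{\ell\in I,\,\ell<i}\bigr) \;\ge\; \Ent\bigl(Z_i \mid Z_1,\dots,Z_{i-1}\bigr).
\end{equation*}
Summing this inequality over all $I\in\cI$ and all $i\in I$, and switching the order of summation, the right-hand side becomes $\sum_{i=1}^n |\{I\in\cI : i\in I\}|\cdot \Ent(Z_i \mid Z_1,\dots,Z_{i-1})$, which by the hypothesis $|\{I\in\cI:i\in I\}|\ge k$ is at least $k\sum_{i=1}^n \Ent(Z_i \mid Z_1,\dots,Z_{i-1}) = k\cdot\Ent(Z_1,\dots,Z_n)$. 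Dividing by $k$ gives the claimed bound.

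Strictly speaking there is no serious obstacle: the proof is a routine combination of the chain rule and the Jensen-type monotonicity, both already recorded in the text. The only point requiring care is the bookkeeping in the second step, where one must verify that every term $\Ent(Z_i\mid Z_1,\dots,Z_{i-1})$ of the chain-rule expansion of the left-hand side is indeed dominated by a term appearing in the expansion of $\Ent((Z_i)_{i\in I})$ for each $I\ni i$; this is exactly where the hypothesis that every index $i$ is covered at least $k$ times enters, producing the factor $1/k$.
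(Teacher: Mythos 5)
Your proof is correct. Note that the paper does not actually prove Lemma~\ref{lem:shearer} --- it is stated with a citation to Chung--Frankl--Graham--Shearer --- so there is no in-text argument to compare against; what you have written is the standard chain-rule proof of Shearer's inequality, and it uses exactly the two facts (the chain rule~\eqref{eq:entropy-chain-rule} and the monotonicity $\Ent(Z\mid Y)\le\Ent(Z\mid\phi(Y))$) that the paper records just before the lemma. The only step you leave implicit is that passing from $\sum_{i=1}^n |\{I\in\cI : i\in I\}|\cdot \Ent(Z_i \mid Z_1,\dots,Z_{i-1})$ to $k\sum_{i=1}^n \Ent(Z_i \mid Z_1,\dots,Z_{i-1})$ requires the nonnegativity of each conditional entropy (so that covering an index \emph{more} than $k$ times can only help); this holds for discrete random variables and is worth a half-sentence, but it is not a gap.
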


We now explain the bound of Galvin--Tetali for proper colorings. Let $G=((V_1, V_2), E)$ be a finite bipartite regular graph of degree $\Delta$ at every vertex. Let $f$ be a uniformly sampled proper $q$-coloring of $G$, regarded as a collection of random variables indexed by the vertices of $G$. Then
\begin{equation}
  \Ent(f) = \Ent(f|_{V_1}) + \Ent(f|_{V_2} \mid f|_{V_1}) = \Ent(f|_{V_1}) + \sum_{v\in V_2} \Ent(f(v) \mid f|_{N(v)}).
\end{equation}
Now cover $V_1$ by the sets $\{N(v)\}_{v\in V_2}$, so that each vertex in $V_1$ is covered exactly $\Delta$ times. Applying Shearer's inequality, we get that
\begin{equation}
  \Ent(f|_{V_1}) \le \frac{1}{\Delta}\sum_{v\in V_2} \Ent(f|_{N(v)}).
\end{equation}
Altogether,
\begin{equation}
  \Ent(f) \le \frac{1}{\Delta}\sum_{v\in V_2} \Big[ \Ent(f|_{N(v)}) + \Delta \Ent(f(v) \mid f|_{N(v)}) \Big].
\end{equation}
The expression inside the sum is easily identified with $\Ent(g)$, where $g$ is a uniformly sampled proper $q$-coloring of the $\Delta$-regular complete bipartite graph. In conclusion,
\begin{equation}
  \Ent(f) \le \frac{|V_2|}{\Delta}\Ent(g) = \frac{|V_1 \cup V_2|}{2\Delta} \Ent(g).
\end{equation}
Since $\exp(\Ent(f))$ is the number of proper colorings of $G$ (and, similarly, $\exp(\Ent(g))$ is the number of proper colorings of the $\Delta$-regular complete bipartite graph), this shows that the maximal number of proper $q$-colorings is attained when the graph $G$ is a disjoint union of $\Delta$-regular complete bipartite graphs. It also yields explicit, and relatively simple, upper bounds on the number of proper colorings of $G$.

\subsubsection{Upper bounds on entropy loss}
\label{sec:entropy-bounds}

Recall from Section~\ref{sec:repair transformation} that $X=(X_P)_P$ is fixed, that $\Omega_X$ is the set of proper colorings for which $X$ is a breakup, and that in order to the bound the probability of $\Omega_X$, we must bound the entropy loss resulting from step (i) of the repair transformation.
As this entropy loss is to be compared with the entropy gain from step (iii), which is either $\log \lfloor \tfrac{q}{2} \rfloor$ or $\log \lceil \tfrac{q}{2} \rceil$ per vertex, according to the parity of the vertex, we need to show (roughly speaking) that the entropy per vertex in $X_*$ is less than $\frac12 \log (\lfloor \tfrac{q}{2} \rfloor\lceil \tfrac{q}{2} \rceil) - \varepsilon$ for some constant $\varepsilon>0$.

Let $f$ be sampled from $\Pr_{\Lambda,P_0}$ conditioned on $f\in\Omega_X$. Our goal is to bound the entropy of~$f|_{X_*}$. The basic idea for this is to use the method described in Section~\ref{sec:entropy}, with two main differences that need to be addressed: first, as $X_*$ is not a regular graph in itself (it is a subset of a regular graph), we will need to handle its boundary with special care; second, we cannot simply compare to the complete bipartite graph, which would yield the insufficient bound of $\frac12 \log (\lfloor \tfrac{q}{2} \rfloor\lceil \tfrac{q}{2} \rceil) + \varepsilon$, but rather we must take into account the constraints imposed by the breakup on the coloring on $X_*$ (recall that this is a `bad' region, consisting of overlapping ordered regions, disordered regions and boundaries of ordered regions). We proceed to describe how this is done.

For convenience, let us define $F$ to be the configuration coinciding with $f$ on $X_*$ and equaling a fixed symbol $\star$ on $X_*^c$. Thus, $F$ has the same entropy as $f|_{X_*}$, so that it suffices to bound the entropy of $F$.
In a similar manner as in Section~\ref{sec:entropy}, applying Shearer's inequality to the collection of random variables $(F_v)_{v \in \Even}$ with the collection of covering sets $\cI = \{ N(v) \}_{v \in \Odd}$, yields
\[ \Ent(F)
 = \Ent(F|_{\Even}) + \Ent(F|_{\Odd} \mid F|_{\Even})
 \le \sum_{v \in \Odd} \left[ \tfrac{\Ent(F|_{N(v)})}{2d}  + \Ent\big(F(v) \mid F|_{N(v)}\big) \right]. \]
Averaging this with the inequality obtained by reversing the roles of odd and even, writing $\Ent(F|_{N(v)})$ as $\Ent(F(N(v))) + \Ent(F|_{N(v)}\mid F(N(v)))$ and bounding $\Ent(F(v) \mid F|_{N(v)})$ by $\Ent(F(v) \mid F(N(v)))$, yields that
\begin{equation}\label{eq:entropy-bound-overview}
\Ent(F) \le \frac{1}{2}\sum_{v} \bigg[ \underbrace{\tfrac{\Ent\big(F(N(v))\big)}{2d}}_{\textup{I}} + \underbrace{\tfrac{\Ent\big(F|_{N(v)}~\mid~ F(N(v))\big)}{2d} + \Ent\big(F(v) \mid F(N(v))\big)}_{\textup{II}} \bigg].
\end{equation}
Of course, the terms corresponding to vertices $v$ at distance two or more from $X_*$ equal zero as $F$ is deterministic in their neighborhood.
The boundary terms corresponding to vertices $v$ in $\intextB X_*$ need to be handled with careful bookkeeping, which we do not elaborate on here.
The advantage of this bound is that it is local, with each term involving only the values of $F$ on a vertex and its neighbors. Each term admits the simple bounds
\begin{equation}\label{eq:entropy-simple-bounds}
\textup{I}\le \tfrac{q\log 2}{2d} \qquad\text{and}\qquad \textup{II}\le\log (\lfloor \tfrac{q}{2} \rfloor  \lceil \tfrac{q}{2} \rceil),
\end{equation}
which only take into account the fact that $f$ is a proper coloring, i.e., that $F(v)\notin F(N(v))$. Note that the bound on $\textup{I}$ can be made arbitrarily small by taking the dimension high enough, so that we are mostly concerned with improving the bound on~$\textup{II}$ (though near the boundary of $X_*$, we also need to improve the former bound). Specifically, it suffices to prove a bound of the form $\textup{II} \le \log (\lfloor \tfrac{q}{2} \rfloor  \lceil \tfrac{q}{2} \rceil) - \varepsilon$, where $\varepsilon>0$ is a constant which may depend on $q$ (as a power-law if we wish to obtain a power-law dependence between $d$ and $q$ in the final result), but otherwise does not depend on $d$.
The main contribution to the stated bounds~\eqref{eq:entropy-simple-bounds} comes from the possibility that $(F(v),F|_{N(v)})$ is approximately uniformly distributed in $A \times B^{2d}$ for some dominant pattern $(A,B)$. To obtain stronger bounds, we use additional information implied by the knowledge that $f \in \Omega_X$.

Let us illustrate this idea through examples.
Consider a vertex $v \in X_\overlap$, which for concreteness, we assume to be even. By definition, there exist distinct dominant patterns $P=(A,B)$ and $Q=(A',B')$ such that $v \in X_P \cap X_Q$. Suppose first that $v$ is both $P$-even and $Q$-even (so that $|A|=|A'|=\lfloor \frac q2 \rfloor$). Recalling~\eqref{eq:Z-def}, one may check that $v$ is both in the $P$-phase and in the $Q$-phase, so that $f(v) \in A \cap A'$. In particular, $(f(v),f|_{N(v)})$ belongs to $(A \cap A', (\{1,\dots,q\} \setminus (A \cap A'))^{2d})$. Hence,
\[ \textup{II} \le \log (|A \cap A'| \cdot (q - |A \cap A'|)) \le \log ((\lfloor \tfrac q2 \rfloor-1)(\lceil \tfrac q2 \rceil+1)) \le \log (\lfloor \tfrac{q}{2} \rfloor  \lceil \tfrac{q}{2} \rceil - 1) .\]
Next, suppose instead that $v$ is $P$-even and $Q$-odd (so that $q$ is necessarily odd, $|A|=\lfloor \frac q2 \rfloor$ and $|A'|=\lceil \frac q2 \rceil$).
Using~\eqref{eq:Z-def}, one may again check that $v$ is in the $P$-phase and that $N(v)$ in the $Q$-phase, so that $f(v) \in A$ and $f(N(v)) \subset B'$. Hence,
\[ \textup{II} \le \log (|A| \cdot |B'|) = 2 \log \lfloor \tfrac q2 \rfloor = \log (\lfloor \tfrac{q}{2} \rfloor  \lceil \tfrac{q}{2} \rceil - \lfloor \tfrac{q}{2} \rfloor) \le \log (\lfloor \tfrac{q}{2} \rfloor  \lceil \tfrac{q}{2} \rceil - 1) .\]
The case when $v$ is both $P$-odd and $Q$-odd is similar.

To handle vertices in $X_\bad$ or $\intextB X_P$ requires more work.
For vertices $v \in X_\bad$, the deterministic information implied by $f \in \Omega_X$ does not suffice to obtain a good bound. Indeed, the conditional entropies in $\textup{II}$ are averages over entropies on events of the form $\{f(N(v))=A\}$ with $A \subset \{1,\dots,q\}$, and only in certain cases do we have good control on these entropies (for instance, when $|A| \notin \{\lfloor \tfrac{q}{2} \rfloor, \lceil \tfrac{q}{2} \rceil \}$). This is overcome by controlling the probabilities of such events. For vertices $v \in\intextB X_P$, the problem is of a different nature. Intuitively, the loss of entropy is not tied to the vertex-boundary $\intextB X_P$, but rather to the edge-boundary $\partial X_P$. Indeed, given an edge $(w,v) \in \dpartial X_P$ (i.e., $\{w,v\} \in \partial X_P$ with $w \in X_P$ and $v \notin X_P$), \eqref{eq:Z-def} implies that $f(w) \in A$ and $f(N(v)) \not\subset A$, where $A$ is a side of $P$ having $|A|=\lfloor \frac q2 \rfloor$. After $\textup{II}$ is rewritten as a sum over neighbors $u$ of $v$, the summand corresponding to $u=w$ can be effectively bounded in this case.
These ideas are explained in more detail below.

\subsubsection{Upper bounds on entropy loss -- a closer look}\label{sec:upper_bounds_closer_look}

We elaborate here on the type of additional information we shall use in order to improve the simple bounds~\eqref{eq:entropy-simple-bounds} on~\textup{I} and~\textup{II} of~\eqref{eq:entropy-bound-overview}.
Note first that, using the definition of conditional entropy, \textup{II} can be written more explicitly as an average:
\begin{equation}\label{eq:entropy-IIa}
\textup{II} = \sum_{A \subset \{1,\dots,q\}} \Big[\underbrace{\tfrac{\Ent\big(F|_{N(v)} ~\mid~ F(N(v))=A\big)}{2d} + \Ent\big(F(v) \mid F(N(v))=A\big)}_{\textup{II'}} \Big] \Pr(F(N(v))=A).
\end{equation}
Observe that each term admits the bound $\textup{II'} \le \log (\lfloor \tfrac{q}{2} \rfloor  \lceil \tfrac{q}{2} \rceil)$. Using this bound for all terms leads to the same bound on \textup{II}, as in~\eqref{eq:entropy-simple-bounds}. The above expression shows that in order to improve the bound on~\textup{II}, it suffices to improve the bound on \textup{II'} for many (in a probabilistic sense) values of $A$.
Furthermore, by identifying $F|_{N(v)}$ with the collection of random variables $\{F(u)\}_{u \sim v}$ and using the subadditivity of entropy, we get the bound
\begin{equation}\label{eq:entropy-IIb}
\textup{II'} \le \frac1{2d} \sum_{u \sim v} \Big[\Ent\big(F(u) \mid F(N(v))=A\big) + \Ent\big(F(v) \mid F(N(v))=A\big) \Big].
\end{equation}
Again, each summand is at most $\log (\lfloor \tfrac{q}{2} \rfloor  \lceil \tfrac{q}{2} \rceil)$, and in order to obtain the desired improvement on the bound on \textup{II'}, it suffices to improve this bound for many neighbors $u$ of $v$.

The improved bounds are based on four notions --- \emph{non-dominant vertices}, \emph{restricted edges}, vertices having \emph{unbalanced neighborhoods} and vertices having a \emph{unique pattern} --- all of which we now define. These notions are somewhat abstract (and not directly related to the breakup) in order to allow sufficient flexibility for the proof of both~\eqref{eq:breakup-prob-bound} and a similar version when only an approximation of $X$ is specified.

Let $f \colon \Z^d \to \{1,\dots,q\}$ be a proper coloring and let $\Omega$ be a collection of proper colorings of $\Z^d$.
The four notions implicitly depend on $f$ and~$\Omega$. Let $v \in \Z^d$ be a vertex. With~\eqref{eq:entropy-IIa} and~\eqref{eq:entropy-IIb} in mind, let us denote $A := f(N(v))$.
We say that

\begin{itemize}
	\item
	$v$ is \emph{non-dominant} (in $f$) if $|A| \notin \{\lfloor \frac{q}{2} \rfloor, \lceil \tfrac{q}{2} \rceil \}$.
Thus, a vertex is non-dominant if the set of colors which appear on its neighbors does not determine a dominant pattern. In this case, we have $\textup{II'} \le \log (|A|(q-|A|)) \le \log (\lfloor \tfrac{q}{2} \rfloor  \lceil \tfrac{q}{2} \rceil - 1)$.

	\item
	Given a neighbor $u$ of $v$, the directed edge $(v,u)$ is \emph{restricted} (in $(f,\Omega)$) if either $\{g(u) : g \in \Omega,~ g(N(v)) = A \} \neq A$ or $\{g(v) : g \in \Omega,~ g(N(v)) = A \} \neq A^c$.
Thus, roughly speaking, $(v,u)$ is restricted if upon inspection of the set of values which appears on the neighbors of~$v$, one is guaranteed that either $u$ or $v$ cannot take all possible values which they should typically take, i.e., either $u$ cannot take some value in $A$, or $v$ cannot take some value in $A^c$.
In the former case, the corresponding summand in~\eqref{eq:entropy-IIb} is at most $\log((|A|-1)(q-|A|))$, and in the latter case, it is at most $\log(|A|(q-|A|-1))$, both of which are bounded by $\log (\lfloor \tfrac{q}{2} \rfloor  \lceil \tfrac{q}{2} \rceil - \lfloor \frac q2 \rfloor)$.

	\item
	$v$ has an \emph{unbalanced neighborhood} (in $f$) if $|\{ u\sim v : f(u) = i \}| \le \frac dq$ for some $i \in A$.
As $A$ increases, the set of values which $v$ may take, namely $A^c$, is reduced, resulting in a trade-off in the entropy contribution at $v$ quantified by \textup{II}.
In order to have high entropy, if some neighbor of $v$ takes a value $i$, many other neighbors of $v$ should take advantage of this as well. The neighborhood of $v$ is therefore deemed unbalanced if some value is taken by few (but at least one) neighbors of $v$. Indeed, a standard Chernoff bound shows that, in this case, the number of possibilities for $f|_{N(v)}$ is at most $e^{-cd/q} |A|^{2d}$, so that $\textup{II'} \le \log (\lfloor \tfrac{q}{2} \rfloor  \lceil \tfrac{q}{2} \rceil) - \frac {c}{2q}$.

	\item
	$v$ has a \emph{unique pattern} (in $\Omega$) if there exists $A' \subset \{1,\dots,q\}$ such that, for every $g \in \Omega$, either $g(N(v)) = A'$ or $v$ is non-dominant in $g$ or all out-directed edges in $\partial v$ are restricted in~$(g,\Omega)$. We may more appropriately term this notion as a unique high-entropy pattern or unique unrestricted pattern, the reason being that there is at most one choice for $g(N(v))$ which does not lead to a reduction of entropy at $v$ by making $v$ non-dominant or causing all out-directed edges in $\partial v$ to be restricted. This notion is of a slightly different nature than the previous ones (e.g., it does not depend on $f$). In particular, it is not used to improve the bound on \textup{II}, but rather on \textup{I}. Indeed, in this case, one may argue that either the above cases (non-dominant pattern or restricted edges) occur with substantial probability (so that \textup{II} is effectively bounded), or else it must be the case that $\textup{I} \le e^{-cd/q^2}$.

\end{itemize}

\smallskip

Let us now return to the situation at hand, where $X=(X_P)_P$ is fixed, $\Omega_X$ is the set of proper colorings for which $X$ is a breakup, and $f$ is sampled from $\Pr_{\Lambda,P_0}$ conditioned on $f\in\Omega_X$.
Let $X^f_\unbal$ be the set of vertices in $X_*$ which have unbalanced neighborhoods in $f$, let $X^f_\nondom$ be the set of vertices in $X_*$ which are non-dominant in $f$, let $X^{\Omega,f}_\rest$ be the set of directed edges $(v,u)$ with $v \in X_*$ which are restricted in $(f,\Omega)$ and let $X^\Omega_\unique$ be the set of vertices in $X_*$ which have a unique pattern in $\Omega$.

The repair transformation explained in Section~\ref{sec:repair transformation}, together with the above bounds on the entropy loss in step (i) of the repair transformation, eventually yield (with careful bookkeeping near the boundary of $X_*$) the following bound on the probability of $\Omega_X$ and, more generally, of subevents of $\Omega_X$. For an event $\Omega \subset \Omega_X$, define
	\[ k(\Omega) := \min_{f \in \Omega} ~ \big|S^f_\unbal\big| + \tfrac{1}{q} \big|S^f_\nondom\big| + \tfrac{1}{d}\big|S^{\Omega,f}_\rest\big| .\]
Then, for any event $\Omega \subset \Omega_X$,
\begin{equation}\label{eq:bound-on-subevents-of-breakup}
\Pr_{\Lambda,P_0}(\Omega) \le \exp\left[-\tfrac cq k(\Omega) + \tfrac qd \big|X_* \setminus X^\Omega_\unique\big| +e^{-cd/q^2}|X_*| \right] ,
\end{equation}
where $c>0$ is a universal constant.
Thus, roughly speaking, if $\Omega$ is a subevent of $\Omega_X$ on which there are almost surely many vertices with unbalanced neighborhoods or non-dominant patterns or many restricted edges, then it must be an unlikely event.

We conclude with a short outline as to how~\eqref{eq:bound-on-subevents-of-breakup} is used to prove to the desired bound~\eqref{eq:breakup-prob-bound} on the probability of $\Omega_X$ itself.
Unfortunately, concluding~\eqref{eq:breakup-prob-bound} from~\eqref{eq:bound-on-subevents-of-breakup} is not straightforward, as the latter (when applied directly to the event $\Omega=\Omega_X$) gives an insufficient bound on the probability of $\Omega_X$.
The difficulty here is that, while $k(\Omega_X)$ is always large in comparison to $L$ and~$M$, it is not necessarily large in comparison to~$N$ (recall $L,M,N$ from~\eqref{eq:breakup-size}).
Indeed, it can be shown that every in-directed edge in $\partial X_P$ (i.e., a directed edge $(v,u)$ such that $\{v,u\} \in \partial X_P$ with $u \in X_P$ and $v \notin X_P$) is necessarily restricted in $f$ and every edge incident to $X_\overlap$ is either restricted in $f$ or incident to a non-dominant vertex in $f$, so that
\[ \tfrac{1}{q}\big|S^f_\nondom\big| + \tfrac{1}{d}\big|S^{\Omega_X,f}_\rest\big| \ge \tfrac{L}{2d} + \tfrac{M}{4q} .\]
Unfortunately, $X_\bad$ need not contain enough restricted edges (or non-dominant vertices or vertices having unbalanced neighborhood) -- the main reason being that, when $q$ is even, $X_\bad$ may contain even vertices $v$ for which $|f(N(v))|=\tfrac{q}{2}$, and when $q$ is odd, $X_\bad$ may contain vertices $v$ for which $|f(N(v))|=\lceil \tfrac{q}{2} \rceil$ (recall that this is one of the undesirable consequences of the definition of ordered regions in~\eqref{eq:Z-def}).
Instead, to obtain a good bound, we shall apply~\eqref{eq:bound-on-subevents-of-breakup} to subevents $\Omega \subset \Omega_X$ on which we have additional information about the coloring on the set $X_\bad$.
For suitably chosen subevents (obtained, roughly speaking, by revealing the colors on a sparse subset of $X_\bad$), the number of restricted edges in $X_\bad$ (and non-dominant vertices and vertices having unbalanced neighborhood) increases enough to ensure that
\[ k(\Omega) \ge \tfrac{L}{3d} + \tfrac{M}{6q} + \tfrac{N}{18q^2} .\]
As the entropy of this additional information is negligible with our assumptions, this will allow us to obtain~\eqref{eq:breakup-prob-bound} by taking a union bound over the subevents~$\Omega$.

\subsection{Odd cutsets and their coarse-grained approximations}
\label{sec:approx}

So far we have seen that a given breakup is unlikely.
Recall that the number of odd cutsets (and hence the number of breakups) is too large to allow us to rule out the existence of a breakup seen from $v$ by using a simple union bound (compare the lower bound in~\eqref{eq:number of odd cutsets} to~\eqref{eq:breakup-prob-bound} or~\eqref{eq:single droplet in ordered pattern odd q}). To that end, as explained, we use a coarse-graining scheme for breakups (which we refer to as approximations of breakups). This coarse-graining scheme builds on a known coarse-graining scheme for odd cutsets (which may be thought of as a ``single-droplet'' breakup; recall the toy scenario from Section~\ref{sec:difficulties}).

It is natural to approximate breakups by approximating each $X_P$ separately. This can indeed be done, but due to the amount of dominant phases, it leads to a version of Theorem~\ref{thm:long-range-order} which requires the dimension $d$ to be larger than an exponential function of $q$, rather than the stated power-law dependence~\eqref{eq:dim-assump}. Instead, we use a more sophisticated scheme which takes into account the interplay between the different $X_P$.

An approximation of a breakup $X=(X_P)_P$ is a collection $A = ((A_P)_{P \in \phasedom}, A^*,A^{**})$ of subsets of $\Z^d$, with the following properties: $A_P \subset X_P \subset A_P \cup A^{**}$ for all $P$, so that $A_P$ represents the region known to be in the corresponding set of the breakup and $A^{**}$ indicates the (joint) region which is unknown to belong to some $X_P$.
On a subset $A^* \subset A^{**}$ of the unknown region, one has more information, namely, every vertex in $A^*$ has many neighbors in $\bigcup_P A_P$. Additional properties ensure that the unknown region is not large and that it is only present near $X_*$.
Thus, an approximation provides enough information to recover the breakup everywhere but near $X_*$. We do not give a precise definition here. See Figure~\ref{fig:breakup-approx} for an illustration.

\begin{figure}
	\centering
	\includegraphics[scale=0.204]{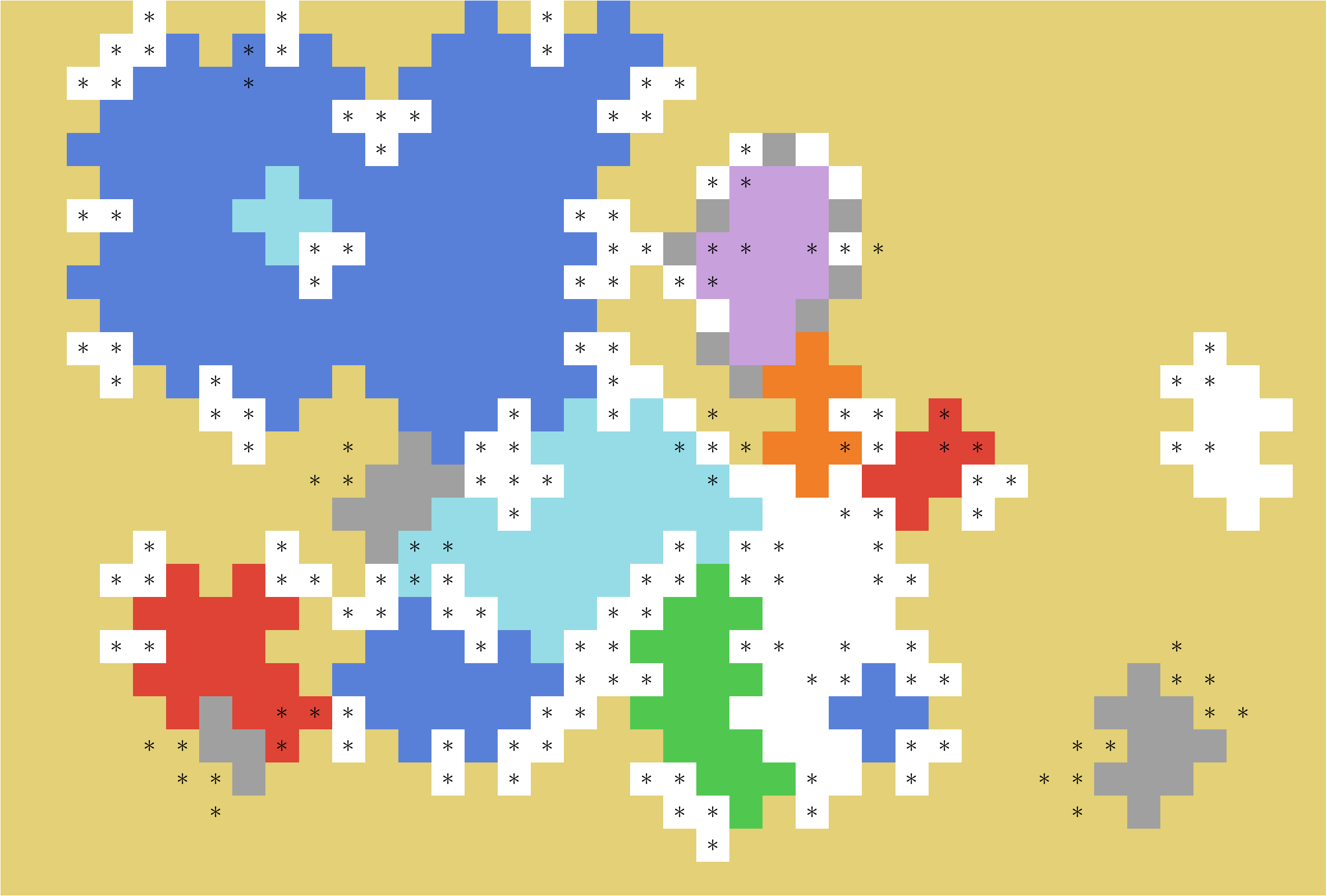}~
	\includegraphics[scale=0.204]{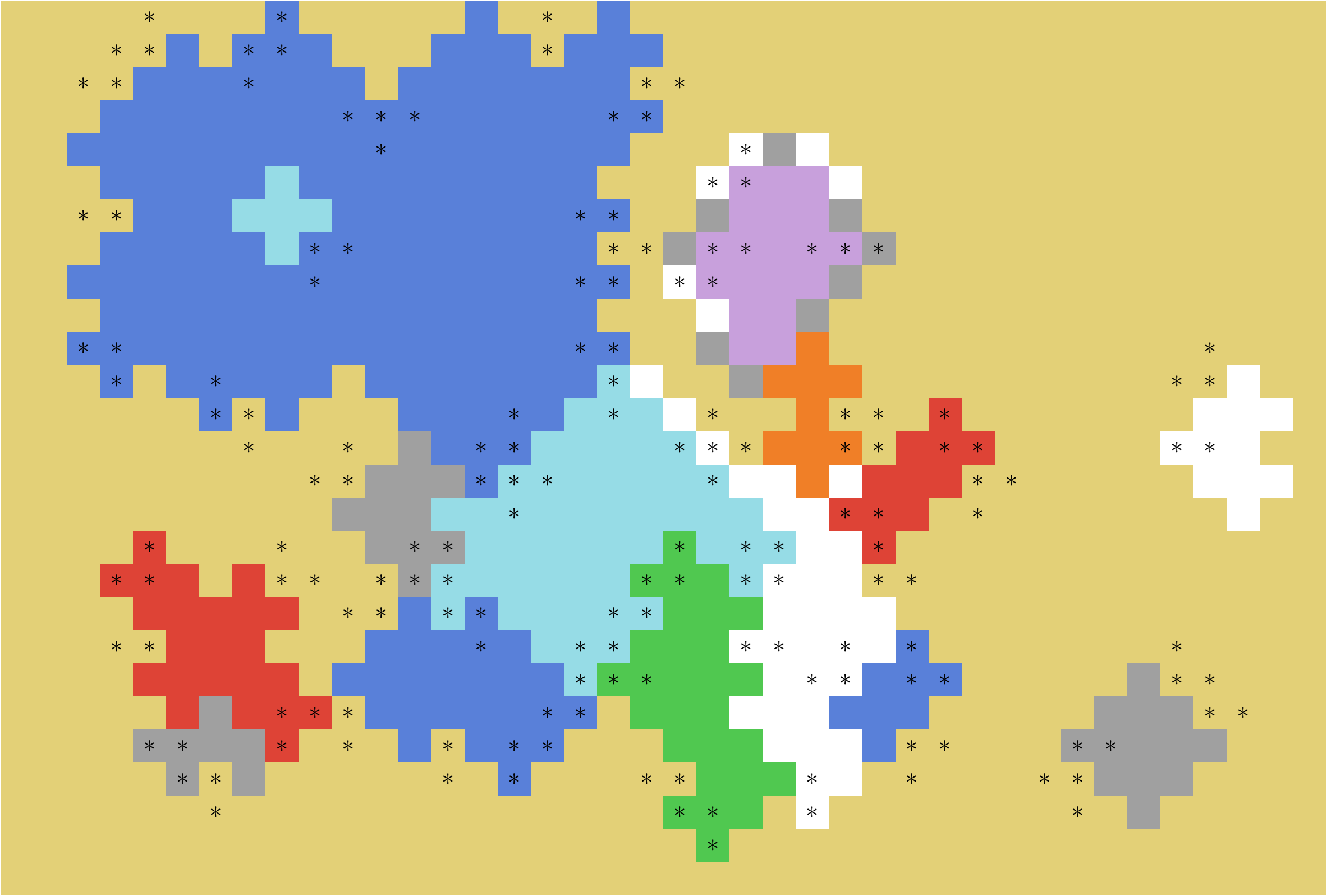}\vspace{-3pt}
    \includegraphics[scale=0.26]{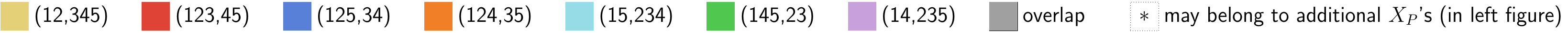}
	\captionsetup{width=0.95\textwidth, font=small}
	\caption{An illustration of an approximation (left) of a breakup (right). An approximation provides partial information on a breakup. On the left, the colors represent regions belonging to a single $A_P$ -- these regions are known to belong to $X_P$ and are not known to belong to any other $X_Q$. A gray background indicates regions belonging to two or more $A_P$ -- these regions are known to belong to $X_\overlap$. A white background indicates regions which do not belong to any $A_P$. A star ($*$) indicates the set $A^{**}$ -- these regions may belong to additional $X_P$'s. In particular, a white background with no star indicates regions which do not belong to $A^{**}$ or to any $A_P$ -- these regions are known to belong to $X_\bad$. The stars are also shown on the right to ease comparison.}
	\label{fig:breakup-approx}
\end{figure}

The advantage of approximations is that one may find a relatively small family $\cA$ of them (much smaller than the number of breakups) with the property that every breakup seen from $v$ is approximated by some element in $\cA$. Thus, by extending the bound~\eqref{eq:breakup-prob-bound} from the situation where the breakup is given to the situation where only an approximation of the breakup is given (the derivation of this bound follows similar lines to that of~\eqref{eq:breakup-prob-bound}), one may then obtain Theorem~\ref{thm:long-range-order} by taking a union bound over $\cA$.

We do not elaborate further on the approximations of breakups in this overview, but rather content ourselves with some explanation on the approximations of odd cutsets (on which the more elaborate coarse-graining scheme is based).

Recall that a subset of $\Z^d$ is called \emph{odd} (\emph{even}) if its internal vertex-boundary consists solely of odd (even) vertices, and that such a set is \emph{regular} if both it and its complement contain no isolated vertices. Recall also that an \emph{odd cutset} is the edge-boundary of a domain. We shall henceforth identify odd cutsets with their associated domain, so that an odd cutset is a non-empty finite odd set which is connected and has a connected complement. Note that an odd cutset which is not a singleton is necessarily regular odd.

An \emph{approximation} (for odd sets) is a pair $A=(A_\ins,A_\out)$ of disjoint subsets of $\Z^d$ such that $A_\ins$ is odd and $A_\out$ is even.
We say that $A$ approximates an odd set $S$ if $A_\ins \subset S$ and $A_\out \subset S^c$. Thus, we think of $A_\ins$ as the set of vertices known to be in $S$,
$A_\out$ as the vertices known to be outside $S$, and $A_* := (A_\ins \cup A_\out)^c$ as the vertices whose association is unknown. See Figure~\ref{fig:approx} for an illustration of these notions.

\begin{figure}
	\captionsetup{width=0.87\textwidth}
	\captionsetup[subfigure]{justification=centering}
    \begin{subfigure}[t]{.35\textwidth}
        \includegraphics[scale=0.29]{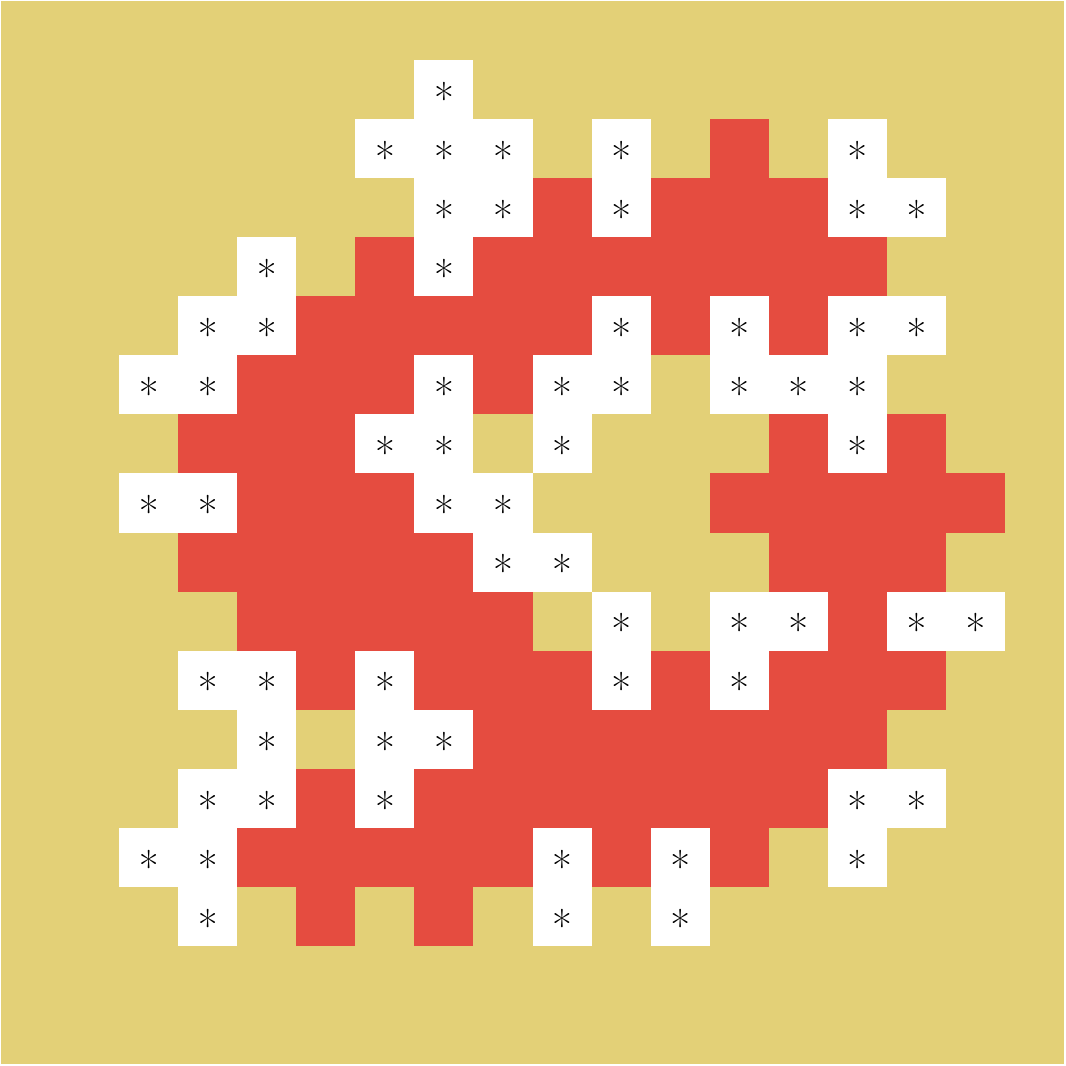}
        \caption{An approximation $A$.}
        \label{fig:approx-sample}
    \end{subfigure}%
    \begin{subfigure}{2pt}
		~
    \end{subfigure}%
    \begin{subfigure}[t]{.65\textwidth}
        \includegraphics[scale=0.29]{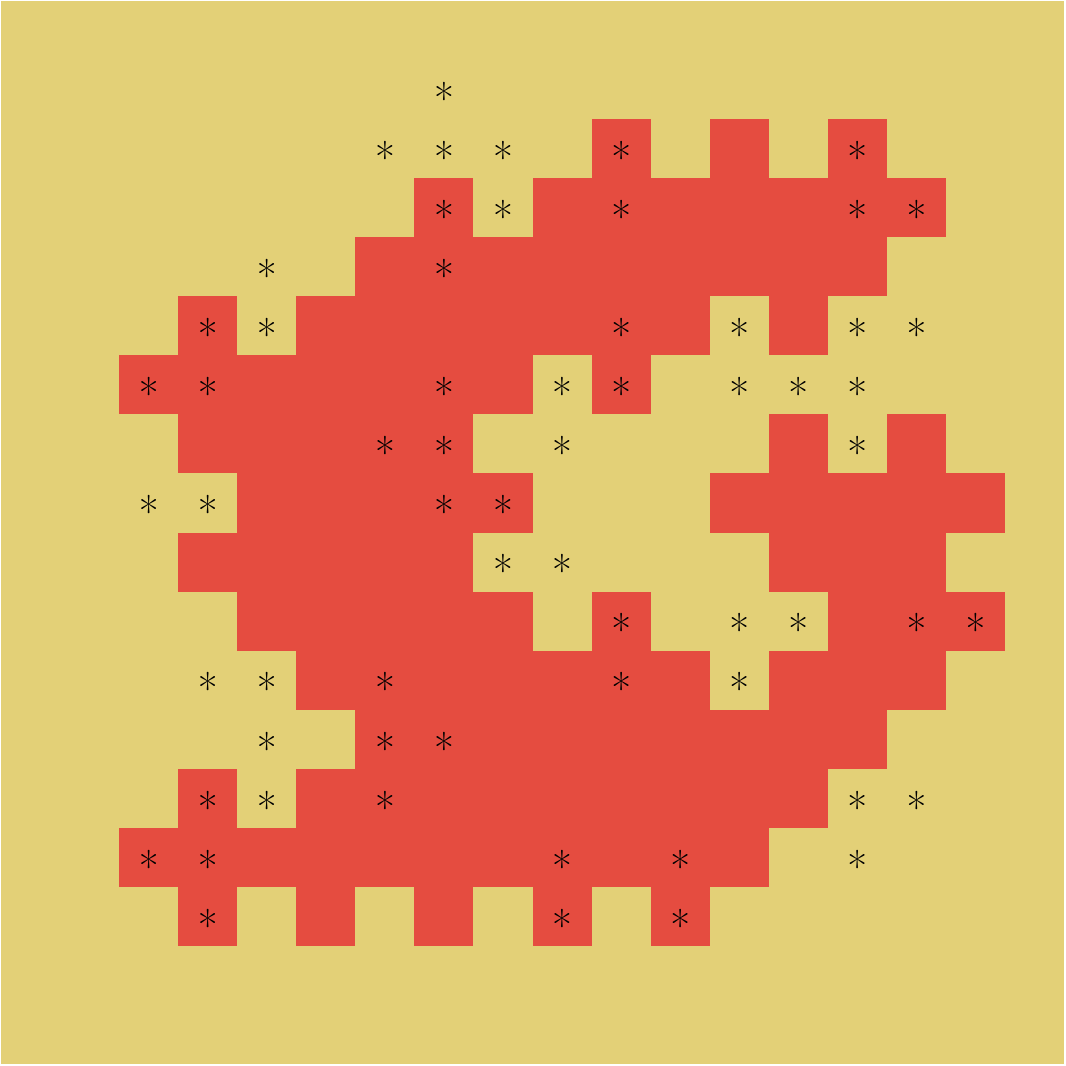}~~%
        \includegraphics[scale=0.29]{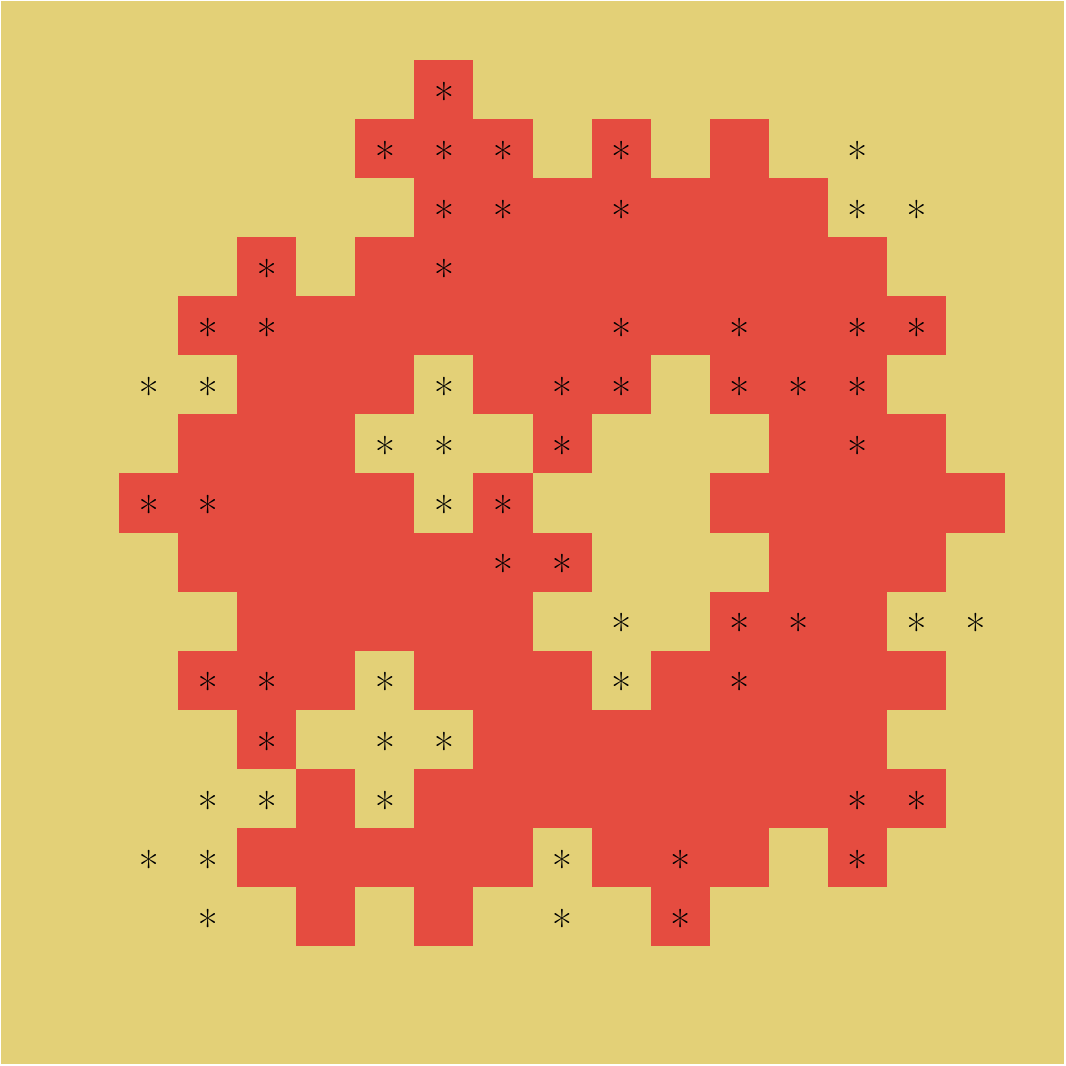}
        \caption{Two possible regular odd sets approximated by $A$.}
        \label{fig:approx-sets}
    \end{subfigure}
    \caption{An approximation and two regular odd sets approximated by it are illustrated. Vertices belonging to $A_\ins$ ($A_\out$) are known to be in $S$ ($S^c$); these are depicted in~(\subref{fig:approx-sample}) by a red (yellow) background. The remaining vertices belong to $A_* = (A_\ins \cup A_\out)^c$ and are unknown to be in $S$ or $S^c$; these are depicted by $*$ and a white background.}
    \label{fig:approx}
\end{figure}

Note that an approximation of a regular odd set, as defined above, need not be a ``good approximation'' in any sense as we have not required much of it.
We now describe a particular sense in which an approximation may be good.
Let $1 \le t < 2d$.
A \emph{$t$-approximation} is an approximation $A$ such that the subgraph of $\Z^d$ induced by $A_*$ has maximum degree at most $t$ and has no isolated vertices.
It is instructive to notice that if a $t$-approximation $A$ approximates a regular odd set $S$, then any unknown vertex is near the boundary in the sense that $A_* \subset (\intextB S)^+$. Of course, the smaller $t$ is, the better the approximation is as it places a stronger constraint on the unknown set $A_*$.

The main result about approximations is that it is possible to construct a small family of $\sqrt{d}$-approximations which contains an approximation of every odd cutset containing the origin and having a given length.

\begin{proposition}\label{lem:family-of-approx}
	There exists a constant $C>0$ such that for any integer $\ell \ge 1$, there exists a family $\cA$ of $\sqrt{d}$-approximations of size
	\[ |\cA| \le \exp\left(C\ell \big(\tfrac{\log d}{d}\big)^{3/2}\right) \]
	such that every odd cutset $S$ containing the origin and having length $|\partial S|=\ell$ is approximated by some element in $\cA$.
\end{proposition}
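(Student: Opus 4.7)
The plan is to realize $\cA$ through a deterministic encoding-decoding scheme: an encoding $\Phi$ assigning to each admissible odd cutset $S$ a small \emph{profile} $\Phi(S)$, together with a deterministic decoder $\Psi$ producing an approximation $\Psi(\Phi(S))$ of $S$ which is guaranteed to be a $\sqrt d$-approximation. Setting $\cA := \Psi(\mathrm{range}(\Phi))$, the task reduces to bounding the number of possible profiles. This is the Sapozhenko-style coarse-graining adapted to odd cutsets in~\cite{peled2010high, peled2014odd, feldheim2018growth}.

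\emph{Structure of the profile and the decoder.} Call a boundary vertex $v\in\intB S$ \emph{flat in direction $e$} if $v$ has exactly $2d-1$ neighbors in $S$ and the unique $S^c$-neighbor is $v+e$; call $v$ \emph{irregular} otherwise, and write $\Xi(S)\subset\intB S$ for the irregular set. The profile $\Phi(S)$ consists of $\Xi(S)$ together with outward-normal labels, and a small amount of face-direction data picking out one representative per maximal flat face of $\partial S$ (the faces being identified via Tim\'ar's plaquette-connectedness of $\partial S$~\cite{timar2013boundary}). The decoder $\Psi$ seeds an initial approximation by placing each labeled vertex into $A_\bullet^{(0)}$ and its labeled outward neighbor into $A_\circ^{(0)}$, then deterministically propagates $A_\bullet$ and $A_\circ$ across each flat face in its prescribed direction until the first irregular neighbor is encountered. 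The remaining uncertain region $A_*$ is a thin shell around $\Xi(S)$, and the flatness condition on every vertex outside $\Xi(S)$ ensures that the subgraph induced by $A_*$ has maximum degree at most $\sqrt d$, giving a $\sqrt d$-approximation of $S$.

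\emph{Main obstacle -- counting profiles.} The crux is the bound
\[ |\mathrm{range}(\Phi)| \le \exp\bigl(C\ell(\log d/d)^{3/2}\bigr),\]
which in turn rests on an isoperimetric rigidity estimate for odd cutsets -- the same one underlying the upper bound in~\eqref{eq:number of odd cutsets}. Namely, the odd-sublattice constraint on $\intB S$ forces each deviation from axis-parallel flatness to incur an amortized excess of order $\sqrt d/(\log d)^{3/2}$ edges in $|\partial S|$ beyond the flat minimum of $2d-1$ per boundary vertex. This yields $|\Xi(S)|\le C\ell(\log d)^{3/2}/d^{3/2}$, and moreover confines the irregular vertices to a small universe determined locally by the flat-face structure, so that only $\exp(O(1))$ choices per irregular vertex remain once the flat-face skeleton is fixed. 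The proof of this rigidity estimate is the heart of the argument; it relies on the fact that $\intB S$ lies on a single parity sublattice, which amplifies each ``non-flat defect'' into a multiplicative excess in the edge boundary and is the feature that distinguishes odd cutsets from general contours (compare the two bounds in~\eqref{eq:bounds on the number of contours} and~\eqref{eq:number of odd cutsets}).

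\emph{Conclusion.} Combining the bound on $|\Xi(S)|$ with the bound on its effective universe size and a standard union count over the flat-face skeleton yields
\[ |\cA|\le|\mathrm{range}(\Phi)|\le \exp\bigl(C\ell(\log d/d)^{3/2}\bigr),\]
as required. The only step that goes beyond routine bookkeeping is the rigidity estimate for odd cutsets; everything else -- the closure-style decoder, the seeding, and the final counting -- is a direct consequence once the rigidity is in hand.
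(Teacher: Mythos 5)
There is a genuine gap, and in fact the central quantitative claim of your sketch is false as stated. You define a boundary vertex $v\in\intB S$ to be flat if it has exactly $2d-1$ neighbors in $S$ (one boundary edge), declare everything else irregular, and assert the ``rigidity estimate'' $|\Xi(S)|\le C\ell(\log d)^{3/2}/d^{3/2}$. But consider the odd cutset bounding a box, as in Figure~\ref{fig:cubes}: to make the boundary odd, every second face vertex is pushed out by one step, and each such ``tooth'' has exactly one neighbor in $S$ and $2d-1$ boundary edges. These teeth are irregular by your definition, and there are $\ell/(2d)$ of them --- vastly more than $\ell(\log d/d)^{3/2}$. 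This is not a pathology but a structural necessity: Lemma~\ref{lem:four-cycle-property} shows that for every edge $\{u,v\}\in\partial S$ of an odd cutset, $|\partial u\cap\partial S|+|\partial v\cap\partial S|\ge 2d$, so vertices carrying many boundary edges occur in bulk along every flat face and cannot be treated as a sparse defect set to be enumerated. Consequently your profile $\Phi(S)$ is not small, and the counting step collapses.

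Beyond this, the two genuinely hard ingredients are asserted rather than proved: the rigidity estimate itself (``each deviation incurs an amortized excess of order $\sqrt d/(\log d)^{3/2}$ edges'') and the claim that the leftover region $A_*$ induces a subgraph of maximum degree at most $\sqrt d$. Note that the paper itself does not prove Proposition~\ref{lem:family-of-approx}; it defers to~\cite{feldheim2018growth} and only presents Proposition~\ref{prop:existence-of-U} as the basis of the construction. The mechanism there is quite different from counting non-flat vertices: one sets $s=\sqrt{d\log d}$, forms $A=\extB S\cap N_s(\intB S)$ and $A'=\intB S\cap N_{2d-s}(\extB S)$ (small by the double-counting Lemma~\ref{lem:sizes}), and then uses the Lov\'asz fractional-cover argument (Lemma~\ref{lem:existence-of-covering2}) to extract a set $U$ of size $O(\ell\sqrt{\log d}/d^{3/2})$ with $N(U)$ separating $S$; the $\sqrt d$-approximation is then obtained from $U$ by further (nontrivial) work. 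If you want to complete your write-up, you should replace the flat/irregular dichotomy and the unproved rigidity claim with this covering construction, or supply an actual proof of a correctly formulated substitute.
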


We do not prove Proposition~\ref{lem:family-of-approx} here; see~\cite{feldheim2018growth} for a full proof.
Instead, to give a flavor of how one obtains such a coarse-graining scheme, we prove a result which serves as the basis for its construction.

We say that a set $W$ \emph{separates} $S$ if every edge in $\partial S$ has an endpoint in $W$.
The following proposition shows that for every regular odd set $S$, we can find a small set $U$ such that $N(U)$ separates $S$.

\begin{proposition}\label{prop:existence-of-U}
	Let $S \subset \Z^d$ be a regular odd set with $\ell:=|\partial S|$ boundary edges. Then there exists a set $U \subset (\intextB S)^+$
of size at most $40\ell d^{-3/2}\sqrt{\log d}$ such that $N(U)$ separates $S$.
\end{proposition}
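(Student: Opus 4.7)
The strategy is a two-stage construction: a deterministic cover of the boundary vertices with high outgoing-boundary degree, followed by a random cover of the remainder, with careful counting to keep $|U|$ small.

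Partition $\intB S$ by $k(v):=\deg_{S^c}(v)$, which satisfies $\sum_{v\in\intB S}k(v)=\ell$. Fix a threshold $t\asymp d^{3/2}/\sqrt{\log d}$ and let $H:=\{v\in\intB S:k(v)\ge t\}$, so $|H|\le\ell\sqrt{\log d}/d^{3/2}$. For each $v\in H$ include one neighbor of $v$ (e.g.\ a vertex in $\extB S\subset(\intextB S)^+$) in $U$; this puts $v\in N(U)$ and separates every boundary edge incident to $v$. This stage contributes $O(\ell\sqrt{\log d}/d^{3/2})$ vertices to $U$, already within the final budget.

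For the ``thin'' set $T:=\intB S\setminus H$, each $v\in T$ has at least $2d-t\ge d$ even neighbors in $S$; let $I\subset(\intB S)^+\subset(\intextB S)^+$ denote the union of these inside candidates. I sample $U'\subset I$ by including each $x\in I$ independently with a suitable probability $p$. Then for $v\in T$, $\Pr(v\notin N(U'))\le(1-p)^{d}\le e^{-pd}$. By a union bound---applied carefully, possibly after subdividing $T$ into local pieces to control the logarithmic dependence on $|\intB S|$---choosing $p=\Theta((\log d)/d)$ yields $N(U')\supseteq T$ with positive probability. The final $U$ is the union of the deterministic and random stages; by construction $N(U)$ then separates $\partial S$.

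The crux and main obstacle is to bound the candidate set size $|I|$ so that $\E|U'|=p\,|I|\le O(\ell\sqrt{\log d}/d^{3/2})$. The naive estimate $|I|\le 2d|T|$ is far too weak. A refined bound must exploit the regular-odd-set structure: because $S^c$ has no isolated vertex, the local configuration around each $x\in I$ forces a minimum contribution to $\partial S$, which allows one to ``charge'' inside candidates to boundary edges with large multiplicity. Concretely, one groups the vertices of $I$ according to shared boundary edges in the axis-aligned directions of $\Z^d$, and invokes the odd-parity constraint to conclude an estimate of the form $|I|\lesssim \ell/\sqrt{d\log d}$, which combines with $p$ to give the desired bound. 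The balance of the argument is standard probabilistic cover analysis; the essence of the proof lies in this refined isoperimetric counting of $|I|$.
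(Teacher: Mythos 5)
Your toolkit is the right one — a double-counting bound on a small candidate set followed by a random $p$-cover with $p\asymp(\log d)/d$ is precisely the mechanism of the paper's Lemma~\ref{lem:sizes} and Lemma~\ref{lem:existence-of-covering2} — but the target you aim it at is unattainable, and this is a fatal gap rather than a fixable detail. Your plan is to put all of $T=\intB S\setminus H$ inside $N(U')$ (note also that your threshold $t\asymp d^{3/2}/\sqrt{\log d}$ exceeds the maximum degree $2d$, so $H=\emptyset$ and $T=\intB S$; for the same reason the first stage cannot stay within budget for any threshold that makes $H$ nonempty). But a regular odd set can have $|\intB S|\asymp\ell$: take an odd half-space and puncture it with many well-separated holes of the form $\{w\}\cup N(w)$ with $w$ odd; each hole contributes about $4d^2$ edges to $\partial S$ and about $2d^2$ odd vertices (those at distance $2$ from $w$) to $\intB S$, each with only one or two boundary edges. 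Any $U'$ with $\intB S\subset N(U')$ then has $|U'|\ge|\intB S|/2d\gtrsim\ell/d$, exceeding the allowed $40\ell d^{-3/2}\sqrt{\log d}$ by a factor of order $\sqrt{d/\log d}$. The same example falsifies the bound $|I|\lesssim\ell/\sqrt{d\log d}$ that you identify as the crux: each $v\in T$ contributes at least $d$ inside candidates and each candidate serves at most $2d$ vertices of $T$, so $|I|\ge|T|/2$, and in the punctured example $I$ even picks up the order-$d^3$ even vertices at distance $3$ from each $w$, giving $|I|\gg\ell$.

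The missing idea is that one should not separate $\partial S$ through the inner endpoints of its edges. The paper first invokes the parity property of odd sets (Lemma~\ref{lem:four-cycle-property}: for every $\{u,v\}\in\partial S$ the boundary degrees of $u$ and $v$ sum to at least $2d$) to conclude that the set $S^{\rev}$ of vertices incident to at least $d$ boundary edges already separates $S$ (Corollary~\ref{cor:revealed-separate}); this set has size at most $2\ell/d$ and, in the punctured example, meets each hole only in the $2d$ vertices $N(w)$, all of which are covered by the single vertex $w$. Even after this reduction a naive random cover of $N(S^{\rev})$ is too costly, and a second application of the four-cycle property is needed: for revealed $u\in S$ and a neighbor $z\in N(u)\cap S$ (which exists by regularity), each of the $\ge d-1$ four-cycles through $u,z$ crossing $\partial S$ certifies that either its outside corner lies in $A:=\extB S\cap N_{s}(\intB S)$ or its inside corner lies in $A':=\intB S\cap N_{2d-s}(\extB S)$ with $s=\sqrt{d\log d}$; both sets have size $O\bigl(\ell/\sqrt{d\log d}\bigr)$ by double counting, one of $u,z$ has at least $d/4$ neighbors in one of them, and only then does the random-cover lemma close within budget. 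Your proposal gestures at "the odd-parity constraint" and at charging candidates to high-multiplicity boundary edges, but without the reduction to $S^{\rev}$ and the $A$/$A'$ dichotomy the counting cannot close.
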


Such a set $U$ provides a coarse description of the boundary of $S$, and therefore already serves as some weak approximation of $S$. Since the set $U$ is small and also nearly connected (it is connected in a slightly spread-out lattice), its enumeration is not too costly and thereby yields a small family which approximates the required odd cutsets in a weak sense. With some additional work, this weak sense can then be upgraded to the stronger sense of a $\sqrt{d}$-approximation. We do not go into details of this latter part here.

For the proof of the above proposition, we shall require two elementary combinatorial facts about graphs, both of which hold for a general graph $G=(V,E)$ of maximum degree~$\Delta$. For $U \subset V$ and $t>0$, denote
\[ N_t(U) := \{ v \in V : |N(v) \cap U| \ge t \} .\]

\begin{lemma}\label{lem:sizes}
	Let $G=(V,E)$ be a graph of maximum degree $\Delta$.
	Then, for any finite $U \subset V$ and $t>0$,
	\[ |N_t(U)| \le \frac{2d}{t} \cdot |U| .\]
\end{lemma}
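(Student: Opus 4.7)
The plan is to prove this by a straightforward double-counting of edges between $U$ and $N_t(U)$, which is the standard approach for such ``expansion/contraction'' inequalities on bounded-degree graphs. (I interpret the bound on the right-hand side as $\frac{\Delta}{t}\cdot|U|$, matching the hypothesis on maximum degree; in the intended application to $\Z^d$ one has $\Delta=2d$, which presumably accounts for the notation in the statement.)

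Concretely, let $E(U, N_t(U))$ denote the set of edges of $G$ with one endpoint in $U$ and the other in $N_t(U)$. First I would bound $|E(U,N_t(U))|$ from above: every vertex of $U$ has degree at most $\Delta$ in $G$, so
\begin{equation*}
  |E(U,N_t(U))| \le \sum_{u\in U} |N(u)| \le \Delta\,|U|.
\end{equation*}
Next I would bound the same quantity from below using the defining property of $N_t(U)$: each $v\in N_t(U)$ satisfies $|N(v)\cap U|\ge t$, and each such neighbor contributes a distinct edge of $E(U,N_t(U))$ incident to $v$, giving
\begin{equation*}
  |E(U,N_t(U))| \ge \sum_{v\in N_t(U)} |N(v)\cap U| \ge t\,|N_t(U)|.
\end{equation*}
Combining the two inequalities yields $t\,|N_t(U)|\le \Delta\,|U|$, and dividing by $t$ gives the claim.

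There is essentially no substantive obstacle here; the only subtlety is the book-keeping that guarantees no edge is counted with the wrong multiplicity, which is handled automatically because $E(U,N_t(U))$ is a set of edges of $G$ and each edge is incident to at most one vertex of $N_t(U)$ that we charge it to (in the lower bound we are summing, over $v\in N_t(U)$, the number of neighbors of $v$ in $U$, which equals the number of edges of $E(U,N_t(U))$ incident to $v$, and this is bounded above by the total $|E(U,N_t(U))|$ since edges have two endpoints). The finiteness of $U$ ensures all quantities are well defined.
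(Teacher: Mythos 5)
Your proof is correct and is essentially the same double-counting argument as the paper's, which bounds $t\,|N_t(U)| \le \sum_{v \in N_t(U)} |N(v)\cap U| = \sum_{u\in U}|N(u)\cap N_t(U)| \le \Delta|U|$ by swapping the order of summation. You are also right that the $2d$ in the statement should read $\Delta$ (a typo carried over from the intended application to $\Z^d$); the paper's own proof indeed yields $\Delta|U|/t$.
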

\begin{proof}
	This follows from a simple double counting argument.
	\[ t |N_t(U)|
		\le \sum_{v \in N_t(U)} |N(v) \cap U|
		= \sum_{u \in U} \sum_{v \in N_t(U)} \1_{N(u)}(v)
		= \sum_{u \in U} |N(u) \cap N_t(U)|
		\le \Delta |U| . \qedhere \]
\end{proof}

The next lemma follows from a classical result of Lov{\'a}sz~\cite[Corollary~2]{lovasz1975ratio} about fractional vertex covers, applied to a weight function assigning a weight of $\frac1t$ to each vertex of $S$. We give a short probabilistic proof of this.

\begin{lemma}\label{lem:existence-of-covering2}
Let $G=(V,E)$ be a graph of maximum degree $\Delta$, let $A \subset V$ be finite and let $t \ge 1$. Then there exists a set $B \subset A$ of size $|B|~\hspace{-4pt}\le~\hspace{-4pt}\frac{1+\log \Delta}{t} |A|$ such that $N_t(A) \subset N(B)$.
\end{lemma}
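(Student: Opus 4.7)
My plan is to use the standard probabilistic covering argument (which is the natural route to the $(1+\log\Delta)/t$ factor, reminiscent of the set-cover greedy/randomized bound). The idea is to first sample a random candidate set $B_0 \subset A$ by independent coin flips, then repair the sample by adding one neighbor from $A$ for each vertex of $N_t(A)$ that happens to be uncovered. The key calculation is to balance the size of $B_0$ against the expected number of uncovered vertices.

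\textbf{Step 1: Random sampling.} Set $p := \tfrac{\log \Delta}{t}$ (if $p \ge 1$ the bound is trivial since one may take $B=A$, so assume $p<1$). Include each vertex of $A$ in $B_0$ independently with probability $p$. Then $\E|B_0| = p|A| = \tfrac{\log\Delta}{t}|A|$.

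\textbf{Step 2: Control the uncovered set.} Let $X \subset N_t(A)$ be the set of vertices with no neighbor in $B_0$. Any $v \in N_t(A)$ has at least $t$ neighbors in $A$, so
\begin{equation*}
  \Pr(v \in X) \le (1-p)^t \le e^{-pt} = \tfrac{1}{\Delta}.
\end{equation*}
Hence $\E|X| \le |N_t(A)|/\Delta$. By Lemma~\ref{lem:sizes}, $|N_t(A)| \le \tfrac{\Delta}{t}|A|$, so $\E|X| \le |A|/t$.

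\textbf{Step 3: Repair and conclude.} For each $v \in X$, pick (arbitrarily) one of its neighbors in $A$ (which exists since $v \in N_t(A)$ and $t\ge 1$) and add it to $B_0$; call the resulting set $B \subset A$. By construction every vertex of $N_t(A)$ has a neighbor in $B$, i.e.\ $N_t(A) \subset N(B)$. Also $|B| \le |B_0| + |X|$, so
\begin{equation*}
  \E|B| \le p|A| + \tfrac{|A|}{t} = \tfrac{1+\log\Delta}{t}|A|.
\end{equation*}
Therefore some realization of $B_0$ (and hence $B$) satisfies $|B| \le \tfrac{1+\log\Delta}{t}|A|$, as required.

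\textbf{Obstacles.} There is no real obstacle: the only point that requires attention is verifying that the neighbor-of-uncovered-vertex can be chosen inside $A$ (which uses the definition of $N_t(A)$ and $t\ge 1$), and that the choice $p = (\log\Delta)/t$ optimally balances the two terms. The rest is the standard ``sample + repair'' probabilistic argument, and the factor $\tfrac{\Delta}{t}$ from Lemma~\ref{lem:sizes} is precisely what makes the repair term absorb into the same $1/t$ scale as the sampling term.
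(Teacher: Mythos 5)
Your proof is correct and is essentially identical to the paper's: the same random subset with inclusion probability $p=\tfrac{\log\Delta}{t}$, the same bound $(1-p)^{t}\le e^{-pt}=\tfrac1\Delta$ on the probability a vertex of $N_t(A)$ is uncovered, the same use of Lemma~\ref{lem:sizes} to bound $|N_t(A)|$, and the same repair step adding one neighbor in $A$ per uncovered vertex. No issues.
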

\begin{proof}
We may assume that $\log \Delta \le t$, since otherwise we may take $B:=A$.
Let $Z$ be a random subset of $A$, where each vertex $v \in A$ is chosen independently with probability $p:= \frac{\log \Delta}{t}$.
We will show that
\[ \E\big[|Z| + |N_t(A) \setminus N(Z)|\big] \leq \frac{1+\log \Delta}{t} |A| .\]
The lemma will then follow by considering such an instance of $Z$, and taking $B$ to be $Z$, together with a neighbor in $A$ of each vertex in $N_t(A) \setminus N(Z)$.

For $v \in N_t(A)$, we have
\[ \Pr(v \notin N(Z)) \le \Pr(\text{Bin}(\lceil t \rceil,p) =0) = (1-p)^{\lceil t \rceil} \le e^{-tp} = \tfrac 1\Delta .\]
Hence,
\[ \E |N_t(A) \setminus N(Z)| \le \tfrac1\Delta |N_t(A)| \le \tfrac 1t |A| . \]
Thus, since $\E|Z| = p |A| = \tfrac 1t |A| \log \Delta$, the proof is complete.
\end{proof}

We also require the following basic property of odd sets.

\begin{lemma}\label{lem:four-cycle-property}
	Let $S$ be an odd set and let $\{u,v\} \in \partial S$. Then, for any unit vector $e \in \Z^d$, either $\{u,u+e\}$ or $\{v,v+e\}$ belongs to $\partial S$. In particular,
	\[ |\partial u \cap \partial S| + |\partial v \cap \partial S| \ge 2d .\]
\end{lemma}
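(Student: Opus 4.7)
\textbf{Proof plan for Lemma~\ref{lem:four-cycle-property}.} The approach is a short 4-cycle (plaquette) argument that exploits the definition of an odd set. Recall that $S$ odd means $\intB S \subset \Z^d_{\text{odd}}$; equivalently, every even vertex in $S$ has all $2d$ of its neighbors in $S$. First I would fix the configuration: since $\{u,v\} \in \partial S$, exactly one endpoint lies in $S$. Assume without loss of generality that $u \in S$ and $v \notin S$. Because $u \in \intB S$, the oddness of $S$ forces $u$ to be odd, and hence its neighbor $v$ is even.

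For the main assertion, I would argue by contradiction on the plaquette $\{u, v, u+e, v+e\}$ determined by the unit vector $e$. Suppose neither $\{u,u+e\}$ nor $\{v,v+e\}$ lies in $\partial S$. Then $u+e$ lies on the same side as $u$, so $u+e \in S$, and $v+e$ lies on the same side as $v$, so $v+e \notin S$. But $u+e$ is even (since $u$ is odd and $e$ is a unit vector), and an even vertex of $S$ cannot be in $\intB S$; hence every neighbor of $u+e$ lies in $S$. In particular, $v+e \in S$, contradicting $v+e \notin S$.

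For the \emph{in particular} clause, I would simply sum the indicator inequality
\[
\mathbf{1}_{\{u,u+e\} \in \partial S} + \mathbf{1}_{\{v,v+e\} \in \partial S} \ge 1
\]
over the $2d$ unit vectors $e \in \{\pm e_1,\dots,\pm e_d\}$. The left-hand side sums to $|\partial u \cap \partial S| + |\partial v \cap \partial S|$ (using that the $2d$ edges incident to $u$ are exactly $\{\{u,u+e\}\}_e$, and likewise for $v$), while the right-hand side sums to $2d$. This gives the claimed bound.

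The argument is essentially forced once one unpacks the odd-set definition, and I do not foresee any real obstacle. The only mild subtlety is the parity bookkeeping that makes $u+e$ even, which is exactly what lets us invoke ``even vertex of $S$ has all neighbors in $S$.'' No cancellation or double-counting issue arises in the final summation, because the bound comes from a per-$e$ inequality rather than from counting distinct edges.
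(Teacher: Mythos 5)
Your proof is correct and is essentially the paper's argument: both reduce to the observation that the endpoint of the boundary edge lying in $S$ must be odd, so that if $u+e\in S$ then the even vertex $u+e$ is interior to $S$ and forces $v+e\in S$ (you phrase this as a contradiction, the paper as a direct implication). The summation over the $2d$ unit vectors for the ``in particular'' clause matches the paper's implicit argument as well.
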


\begin{proof}
	Assume without loss of generality that $u$ is odd. Since $S$ is odd, we have $u \in S$ and $v \notin S$.
	Similarly, if $u+e \in S$ then $v+e \in S$. Thus, either $\{u,u+e\} \in \partial S$ or $\{v,v+e\} \in \partial S$.
\end{proof}

For a set $S$, denote the {\em revealed vertices} in $S$ by
\[ S^{\rev} := \{ v \in \Z^d ~:~ |\partial v \cap \partial S| \geq d \} \subset \intextB S .\]
That is, a vertex is revealed if it sees the boundary in at least half of the $2d$ directions.
The following is an immediate corollary of Lemma~\ref{lem:four-cycle-property}.

\begin{corollary}\label{cor:revealed-separate}
	Let $S$ be an odd set. Then $S^{\rev}$ separates $S$. \qed
\end{corollary}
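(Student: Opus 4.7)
The plan is to invoke Lemma~\ref{lem:four-cycle-property} directly and conclude by a pigeonhole step. Fix an arbitrary edge $\{u,v\} \in \partial S$. We need to show that at least one of the endpoints $u,v$ lies in $S^{\rev}$, that is, satisfies $|\partial w \cap \partial S| \ge d$.

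By Lemma~\ref{lem:four-cycle-property}, since $S$ is odd, we have the inequality $|\partial u \cap \partial S| + |\partial v \cap \partial S| \ge 2d$. The pigeonhole principle therefore forces at least one of the two summands to be at least $d$, so the corresponding endpoint belongs to $S^{\rev}$. As the edge $\{u,v\} \in \partial S$ was arbitrary, every boundary edge has an endpoint in $S^{\rev}$, which is exactly the statement that $S^{\rev}$ separates $S$. There is no substantive obstacle here: all the work lies in Lemma~\ref{lem:four-cycle-property}, and the corollary is its immediate two-line consequence.
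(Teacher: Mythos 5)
Your proof is correct and is exactly the argument the paper intends: the corollary is stated as an immediate consequence of Lemma~\ref{lem:four-cycle-property}, and your pigeonhole step from $|\partial u \cap \partial S| + |\partial v \cap \partial S| \ge 2d$ to one endpoint having at least $d$ boundary edges is precisely why it is immediate.
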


We are now ready to prove Proposition~\ref{prop:existence-of-U}.

\begin{figure}
	\centering
	\includegraphics[scale=1.2]{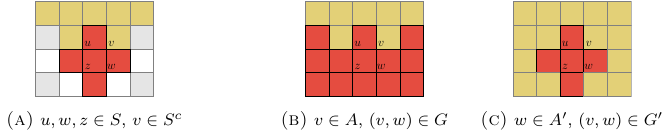}
	\caption{Constructing the separating set. In~\textsc{(a)}, a revealed vertex $u \in S$ is depicted along with a neighbor $z \in S$. Every four-cycle $(u,v,w,z)$ such that $v \in S^c$ (and hence $w \in S$) falls into one of two types. Either $v$ has at least $s$ boundary edges as shown in~\textsc{(b)}, or $w$ has at least $2d-s$ boundary edges as in~\textsc{(c)}. At least half of all such four-cycles belong to the same type. If it is the first type, then $u$ is adjacent to many vertices which have many boundary edges and one such neighbor of $u$ is included in $B$; if it the second type, then $z$ is adjacent to many vertices which have almost all their edges in the boundary and $z$ is included in $B'$. The set $U_S$ is obtained by taking the union of $B$ and $B'$.}
	\label{fig:existence-of-U}
\end{figure}

\begin{proof}[Proof of Proposition~\ref{prop:existence-of-U}]
	Let $S \subset \Z^d$ be a regular odd set and denote $\ell:=|\partial S|$.
	Note that $\partial S = \partial S^c$ implies that $S^{\rev} = (S^c)^{\rev}$. Thus, in light of Corollary~\ref{cor:revealed-separate}, it suffices to show that, for each $R\in\{S,S^c\}$, there exists a set $U_R \subset N(\intB R)$ such that $R \cap R^{\rev} \subset N(U_R)$ and $|U_R| \le 20\ell d^{-3/2} \sqrt{\log d}$.
	Indeed, the lemma then follows by taking $U := U_S \cup U_{S^c}$.
	Since $S$ and $S^c$ are symmetric (up to parity), we may consider the case $R=S$.
	The proof is accompanied by Figure~\ref{fig:existence-of-U}.
	
	Denote $s:=\sqrt{d \log d}$ and $t:=d/4$, and define
	\[ A := \extB S \cap N_s(\intB S) \quad\text{and}\quad A' := \intB S \cap N_{2d-s}(\extB S). \]
	Observe that, by Lemma~\ref{lem:sizes},
	\[ |A| \le \frac{\ell}{s} \quad\text{ and }\quad |A'| \le \frac{\ell}{2d-s}. \]
	We now use Lemma~\ref{lem:existence-of-covering2} with $A$ to obtain a set $B \subset A \subset \extB S$ such that
	\[ |B|\le\frac{4\log d}t|A| \quad\text{and}\quad N_t(A) \subset N(B). \]
	We also define $B':= S \cap N_t(A')$. By Lemma~\ref{lem:sizes}, we have
	\[ |B'| \le \frac{s}{t}|A'| .\]
	Finally, we define $U_S:=B \cup B'$. Clearly, $U_S \subset N(\intB S)$ and
	\[ |U_S| \le \frac{4\ell \log d}{ts} +\frac{s\ell}{t(2d-s)} \le \frac{16\ell \log d}{d\sqrt{d\log d}} +\frac{4\ell\sqrt{d\log d}}{d^2} =
	\frac{20\ell \sqrt{\log d}}{d^{3/2}} .\]
	
	It remains to show that $S \cap S^{\rev} \subset N(U_S)$.
	Towards showing this, let $u \in S \cap S^{\rev} = \intB S \cap N_d(\extB S)$.
	Since $S$ is regular, there exists a vertex $z \in N(u) \cap S$.
	Let $F$ denote the set of pairs $(v,w)$ such that $(u,v,w,z)$ is a four-cycle and $v \in \extB S$, and note that $|F| \ge d-1$.
	Denote
	\[ G := \{ (v,w) \in F ~:~ v \in A \} \quad\text{and}\quad G' := \{ (v,w) \in F ~:~ w \in A' \} .\]
	Note that, by Lemma~\ref{lem:four-cycle-property}, $F = G \cup G'$ and, for any $(v,w) \in F$, we have $w \in S$.
	Since $F = G \cup G'$, either $|G|$ or $|G'|$ is at least $|F|/2 \ge t$.	
	Now observe that
	if $|G| \ge t$ then $u \in N_t(A) \subset N(B)$, while if $|G'| \ge t$ then $z \in N_t(A')$ so that $u \in N(B')$.
	Therefore, we have shown that $u \in N(U_S)$.
\end{proof}

\section{Discussion and open questions}

In the three lectures, we considered the behavior of uniformly sampled proper $q$-colorings of (domains in) the $\Z^d$ lattice, placing particular emphasis on the decay of correlations between the colors assigned to different sites. Three different decay rates were distinguished -- exponential decay (disordered regime), power-law decay (criticality), and no decay (long-range order) -- and results were presented pertaining to each possibility. Among the remaining questions of interest, we wish to point out the following.

\medbreak
\noindent\textbf{General values of $q$ and $d$:} Proper $q$-colorings of $\Z^d$ are disordered when $q>4d$, in the sense of satisfying strong spatial mixing, by Dobrushin's uniqueness condition (see Section~\ref{sec:Dobrushin uniqueness}). This is improved to $q>\alpha\cdot 2d - \gamma$ with $\alpha$ the solution of $\alpha\log\alpha=1$ (so that $\alpha\approx 1.76$) and $\gamma = \frac{4\alpha^3-6\alpha^2-3\alpha+4}{2(\alpha^2-1)}\approx 0.47$ by Goldberg--Martin--Paterson~\cite{goldberg2005strong}. In contrast, the model exhibits long-range order when $d \ge Cq^{10} \log^2 q$, for an absolute constant $C>0$, by the results of~\cite{peled2018rigidity} (see Section~\ref{sec:long-range order}). What is the largest value $q_c(d)$ for which long-range order still holds? Does $q_c(d)$ grow linearly with $d$? If so, what is the limiting value of $q_c(d)/d$ as $d$ tends to infinity?

\medbreak
\noindent\textbf{The anti-ferromagnetic Potts model:} Recall from Section~\ref{sec:Dobrushin uniqueness} that the anti-ferromagnetic (AF) $q$-state Potts model on a graph $G=(V,E)$ at inverse temperature $\beta\ge 0$ is the measure assigning probability proportional to
\begin{equation*}
  \exp\left(-\beta\sum_{\{u,v\}\in E} \1_{f(u) = f(v)}\right)
\end{equation*}
to every $f:V\to\{1,\ldots, q\}$. This measure tends to the uniform distribution on proper $q$-colorings when $\beta$ tends to infinity. What is the behavior of the AF $q$-state Potts model on $\Z^d$? For comparison, in certain classical models it is known that there exists $\beta_c(d)$ so that the model is disordered when $\beta>\beta_c(d)$ and is ordered for $\beta<\beta_c(d)$. At the critical point $\beta = \beta_c(d)$ there are models, such as the two-dimensional ferromagnetic Ising model, for which correlations decay as a power-law (second-order phase transition), and there are models, such as the two-dimensional $q$-state ferromagnetic Potts model with $q>4$~\cite{duminil2016discontinuity,ray2019short}, for which correlations need not decay with distance (first-order phase transition). For the AF $q$-state Potts model on $\Z^d$, Dobrushin's uniqueness condition (Section~\ref{sec:Dobrushin uniqueness}) implies strong spatial mixing when either $\beta\le \frac{C_q}{d}$ or $q>4d$. Long-range order is proved when $q$ and $1/\beta$ are at most $C d^\alpha$ for some $\alpha>0$ using an extension of the techniques of Section~\ref{sec:long-range order}~\cite{peledspinka2018spin}. What is the behavior for intermediate values of the temperature? For instance, is it the case that at all temperatures the correlations decay either at an exponential rate, at a power-law rate, or exhibit no decay at all? Is there a unique transition (critical) point between these regimes? What is the behavior of the model at criticality?

We are not aware of mathematically rigorous results on such intermediate-temperature regimes but the interested reader is directed to the paper of Rahman--Rush--Swendsen~\cite{rahman1998intermediate}, where the $3$-state model in three dimensions is considered, conflicting predictions regarding Permutationally-Symmetric-Sublattice (PSS) and Rotationally-Symmetric (RS) phases are surveyed and the controversy between them is addressed.

\medbreak
\noindent\textbf{Other lattices:} The presented results on the disordered regime (Section~\ref{sec:Dobrushin uniqueness}) apply on general graphs. In contrast, the long-range order result discussed in Section~\ref{sec:long-range order} strongly relies on the bipartite structure of $\Z^d$. It is natural to ask for the behavior of proper colorings, or more generally AF Potts models, on other lattices. Irregularities in a lattice (i.e., having different sublattice densities) often promote the formation of order. This may be used, for instance, to find for each $q$ a \emph{planar} lattice on which the proper $q$-coloring model is ordered~\cite{huang2013two}. However, irregularities also modify the nature of the resulting phase, leading to long-range order in which a single spin value appears on most of the lower-density sublattice~\cite{kotecky2014entropy}, and may lead to partially ordered states~\cite{qin2014partial}.

On the triangular lattice, even the anti-ferromagnetic Ising model (the case $q=2$) is not well understood. As the triangular lattice is not properly 2-colorable, the zero-temperature limit of the AF Ising model becomes the uniform distribution on 2-colorings with the minimal number of non-proper edges. Such colorings turn out to be in bijection with perfect matchings (the dimer model) of the domain and thus have a special integrable structure which may be exploited. This allows to prove power-law decay of correlations at the zero-temperature limit. It is predicted, but not proved, that correlations decay exponentially fast at any positive temperature.

There is another perspective from which the AF Ising model on the triangular lattice is conjectured to behave critically at all temperatures, including infinite temperature ($\beta=0$), where it coincides with critical site percolation. The domain walls between the two color classes form a collection of self-avoiding, non-intersecting loops which are naturally viewed as subsets of edges of the dual hexagonal lattice (this is the loop $O(1)$ model with $x\ge1$; see~\cite[Chapter 3]{peled2019lectures}). These loops are predicted to have fractal structure, with loops appearing at every scale, and to tend to a conformally invariant scaling limit, the CLE process with parameter $\kappa=6$.

Lastly, as mentioned before, proper $4$-colorings of the triangular lattice are predicted to behave critically in the sense of exhibiting power-law decay of correlations, but there are currently no mathematically rigorous results on this case. This model is equivalent to the loop $O(2)$ model with $x=\infty$ (see~\cite[Chapter 3]{peled2019lectures}).

\medbreak
\noindent\textbf{Positive association:} In the study of ferromagnetic models, such as the ferromagnetic Ising or Potts models, a major role is played by positive association inequalities. These state that events which are increasing in a suitable partial order on configurations are positively correlated. Typically, the inequalities are proved by verifying the lattice condition of Fortuin--Kasteleyn--Ginibre~\cite{fortuin1971correlation}. For anti-ferromagnetic models, there is no obvious partial order on configurations that one may use. However, on bipartite graphs the following idea has been suggested: As the colors assigned to adjacent vertices must be different, it may be that the colors assigned to vertices of a single bipartition class have a tendency to be similar. This is the motivation for the following question of the first author and Jeff Kahn~\cite[page 76, question (5)]{OberwolfachCombinatoricsWorkshop2017}: Let $G$ be a finite bipartite graph with bipartition classes $(A,B)$. Let $q\ge3$ and let $f$ be a uniformly sampled proper $q$-coloring of $G$. Does the set $f^{-1}(1)\cap A$ have positive association? Precisely, if $X, Y : \{0, 1\}^A \to \R$ are increasing in the pointwise partial order, does
\begin{equation}\label{eq:positive association}
  \E(X(f^{-1}(1)\cap A) Y(f^{-1}(1)\cap A))\ge \E(X(f^{-1}(1)\cap A)) \cdot \E(Y(f^{-1}(1)\cap A))
\end{equation}
hold? Even in the case of $4$ vertices $u, v, x, y\in A$, it is unknown whether
\begin{equation*}
  \P[f(u) = f(v) = f(x) = f(y) = 1] \ge \P[f(u) = f(v) = 1] \cdot \P[f(x) = f(y) = 1].
\end{equation*}
It is not difficult, however, to show for an arbitrary subset $S\subset A$ that
\begin{equation*}
  \P(f(u) = 1 \text{ and } f(S) = \{1\})\ge \P(f(u) = 1)\cdot \P(f(S) = \{1\}) = \tfrac{1}{q} \P(f(S) = \{1\})
\end{equation*}
 by considering Kempe chains containing $u$; see~\cite[Appendix~A]{ferreira1999antiferromagnetic} for related results valid also at positive temperature. One may further ask whether the FKG lattice condition holds for the collection of indicator random variables $(\1{\{f(v)=1\}})$, $v\in A$, which would then imply~\eqref{eq:positive association}. However, through computer search, we found a counterexample to the lattice condition for 3-colorings of a $9$-vertex graph; see Figure~\ref{fig:dreidel}. We note that there are models for which positive association has been established in the absence of the FKG lattice condition; The two examples in~\cite{fishburn1988match,kahn2007positive} were brought to our attention by Jeff Kahn.

\begin{figure}[t]
 \centering
 \includegraphics[scale=1]{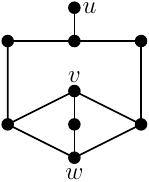}
 \caption{The ``dreidel'' graph -- a counterexample to the FKG lattice condition. The three depicted vertices $u,v,w$ belong to the same bipartition class $A$. Counting 3-colorings directly, one sees that $\P(f(u)=1 \mid f(v)=1) = \frac{23}{56} > \frac{9}{22} = \P(f(u)=1 \mid f(v)=f(w)=1)$, which violates the FKG lattice condition for the collection $(\1{\{f(x)=1\}})_{x \in A}$. In addition, one may check that the events $\{f(u)=f(v)\}$ and $\{f(v)=f(w)\}$ are negatively correlated, showing that the collection $(\1\{f(x)=f(y)\})_{x,y \in A}$ does not have positive association in this example.}
 \label{fig:dreidel}
\end{figure}

\medbreak
\noindent\textbf{Disordered regime on general graphs:} Dobrushin's uniqueness condition (Section~\ref{sec:Dobrushin uniqueness}) shows that proper $q$-colorings of a general graph $G=(V,E)$ with maximal degree $\Delta$ satisfy strong spatial mixing when $q>2\Delta$. What is the sharp dependence in terms of $\Delta$ for this to hold? In this regard we recall again that Chen--Delcourt--Moitra--Perarnau--Postle~\cite{chen2019improved} (improving Vigoda~\cite{vigoda2000improved}) proved that a natural ``flip dynamics'' mixes in time $O(|V|\log|V|)$, and Glauber dynamics mixes in time polynomial in $|V|$, when $q\ge(\frac{11}{6}-\eps)\Delta$ for a fixed $\eps>0$. It seems quite plausible that strong spatial mixing can also be deduced under this assumption. However, it is in fact conjectured that Glauber dynamics mixes rapidly for proper $q$-colorings already when $q\ge \Delta+2$ (see~\cite{jerrum1995very, chen2019improved}). This is not the case for $q=\Delta+1$ as there may exist proper $(\Delta+1)$-colorings which cannot be modified on any vertex while remaining proper~\cite{lubin1993comment} (such colorings trivially cannot exist with $q\ge \Delta+2$ colors). For instance, on the graph $\{0,1,2,3,4\}^2$ endowed with \emph{periodic boundary conditions}, the proper $5$-coloring $f$ defined by $f(x,y) = x+2y\pmod 5$ gives such an example (it is, however, possible to modify this coloring on pairs of adjacent vertices and leave it proper. \emph{Frozen} colorings of $\Z^d$, which cannot be modified on any finite set of vertices, exist only when $q\le d+1$~\cite{alon2019mixing}).

The chromatic number of a general graph is at most $\Delta+1$ (as a greedy coloring shows) and the examples of the clique and odd cycle show that this bound is tight (for other graphs, the chromatic number is at most $\Delta$). Kahn~\cite[page 76, question (5)]{OberwolfachCombinatoricsWorkshop2017} asks whether correlations always decay for proper $(\Delta+1)$-colorings sampled without boundary conditions. Precisely, does there exist a sequence $(\eps_n)$ tending to zero so that the following holds: Let $G$ be a graph with maximal degree $\Delta$. Let $x,y$ be vertices in $G$ at graph distance $r$. Let $f$ be a uniformly sampled proper $(\Delta+1)$-coloring of $G$. Then $|\P(f(x) = f(y)) - \frac{1}{\Delta+1}|\le \eps_r$. In this regard we mention that the $\Delta$-regular tree has a unique Gibbs measure for proper $q$-colorings if and only if $q\ge \Delta+1$~\cite{brightwell2002random,jonasson2002uniqueness}.

\bibliographystyle{amsplain}
%\nocite{*}
\bibliography{biblio}

\providecommand{\bysame}{\leavevmode\hbox to3em{\hrulefill}\thinspace}
\providecommand{\MR}{\relax\ifhmode\unskip\space\fi MR }
% \MRhref is called by the amsart/book/proc definition of \MR.
\providecommand{\MRhref}[2]{%
  \href{http://www.ams.org/mathscinet-getitem?mr=#1}{#2}
}
\providecommand{\href}[2]{#2}
\begin{thebibliography}{100}

\bibitem{molloy2004sampling}
Dimitris Achlioptas, Mike Molloy, Cristopher Moore, and Frank Van~Bussel,
  \emph{Sampling grid colourings with fewer colours}, LATIN 2004: Theoretical
  Informatics. 6th Latin American Symposium. Lecture Notes in Computer Science,
  vol 2976, Springer, Berlin, Heidelberg, 2004.

\bibitem{aizenman1987percolation}
Michael Aizenman, Jean Bricmont, and Joel~L Lebowitz, \emph{Percolation of the
  minority spins in high-dimensional {I}sing models}, Journal of Statistical
  Physics \textbf{49} (1987), no.~3-4, 859--865.

\bibitem{alon2019mixing}
Noga Alon, Raimundo Brice{\~n}o, Nishant Chandgotia, Alexander Magazinov, and
  Yinon Spinka, \emph{Mixing properties of colorings of the {${\bf Z}^d$}
  lattice}, Combinatorics, Probability and Computing \textbf{30} (2021), no.~3,
  360--373.

\bibitem{balister2007counting}
PN~Balister and B~Bollob{\'a}s, \emph{Counting regions with bounded surface
  area}, Communications in mathematical physics \textbf{273} (2007), no.~2,
  305--315.

\bibitem{banavar1980ordering}
Jayanth~R. Banavar, Gary~S. Grest, and David Jasnow, \emph{Ordering and phase
  transitions in antiferromagnetic {P}otts models}, Physical Review Letters
  \textbf{45} (1980), no.~17, 1424--1428.

\bibitem{Benjamini2000}
Itai Benjamini, Olle H{\"a}ggstr{\"o}m, and Elchanan Mossel, \emph{On random
  graph homomorphisms into {${\bf Z}$}}, J. Combin. Theory Ser. B \textbf{78}
  (2000), no.~1, 86--114.

\bibitem{benjamini1994tree}
Itai Benjamini and Yuval Peres, \emph{Tree-indexed random walks on groups and
  first passage percolation}, Probability Theory and Related Fields \textbf{98}
  (1994), no.~1, 91--112.

\bibitem{benjamini2000upper}
Itai Benjamini and Gideon Schechtman, \emph{Upper bounds on the height
  difference of the {G}aussian random field and the range of random graph
  homomorphisms into $\mathbb{Z}$}, Random Structures \& Algorithms \textbf{17}
  (2000), no.~1, 20--25.

\bibitem{benjamini2007random}
Itai Benjamini, Ariel Yadin, and Amir Yehudayoff, \emph{Random
  graph-homomorphisms and logarithmic degree}, Electronic Journal of
  Probability \textbf{12} (2007), 926--950.

\bibitem{van1993uniqueness}
Jacob van~den Berg, \emph{A uniqueness condition for {G}ibbs measures, with
  application to the 2-dimensional {I}sing antiferromagnet}, Communications in
  mathematical physics \textbf{152} (1993), no.~1, 161--166.

\bibitem{van1994disagreement}
Jacob van~den Berg and Christian Maes, \emph{Disagreement percolation in the
  study of {M}arkov fields}, The Annals of Probability \textbf{22} (1994),
  no.~2, 749--763.

\bibitem{berger2019distribution}
Aaron Berger, Caleb Ji, and Erik Metz, \emph{On the distribution of range for
  tree-indexed random walks}, European Journal of Combinatorics \textbf{81}
  (2019), 256--264.

\bibitem{berker1980ground}
AN~Berker and Leo~P Kadanoff, \emph{Ground-state entropy and algebraic order at
  low temperatures}, Journal of Physics A: Mathematical and General \textbf{13}
  (1980), no.~7, L259.

\bibitem{bok2018algorithmic}
Jan Bok, \emph{Algorithmic aspects of $ m $-{L}ipschitz mappings of graphs},
  arXiv preprint arXiv:1801.05496 (2018).

\bibitem{bok2018graphspecial}
\bysame, \emph{Graph-indexed random walks on special classes of graphs}, arXiv
  preprint arXiv:1801.05498 (2018).

\bibitem{bok2018graph}
Jan Bok and Jaroslav Ne{\v{s}}et{\v{r}}il, \emph{Graph-indexed random walks on
  pseudotrees}, Electronic Notes in Discrete Mathematics \textbf{68} (2018),
  263--268.

\bibitem{Bol06}
B{\'e}la Bollob{\'a}s, \emph{The art of mathematics: Coffee time in memphis},
  Cambridge University Press, 2006.

\bibitem{brightwell2002random}
Graham~R Brightwell and Peter Winkler, \emph{Random colorings of a {C}ayley
  tree}, Contemporary combinatorics \textbf{10} (2002), 247--276.

\bibitem{bubley1999approximately}
Russ Bubley, Martin Dyer, Catherine Greenhill, and Mark Jerrum, \emph{On
  approximately counting colorings of small degree graphs}, SIAM Journal on
  Computing \textbf{29} (1999), no.~2, 387--400.

\bibitem{Burton1989density}
R.~M. Burton and M.~Keane, \emph{Density and uniqueness in percolation}, Comm.
  Math. Phys. \textbf{121} (1989), no.~3, 501--505. \MR{990777}

\bibitem{chandgotia2018delocalization}
Nishant Chandgotia, Ron Peled, Scott Sheffield, and Martin Tassy,
  \emph{Delocalization of uniform graph homomorphisms from $\mathbb{Z}^2$ to
  $\mathbb{Z}$}, Communications in Mathematical Physics \textbf{387} (2021),
  no.~2, 621--647.

\bibitem{chen2019improved}
Sitan Chen, Michelle Delcourt, Ankur Moitra, Guillem Perarnau, and Luke Postle,
  \emph{Improved bounds for randomly sampling colorings via linear
  programming}, Proceedings of the Thirtieth Annual ACM-SIAM Symposium on
  Discrete Algorithms, SIAM, 2019, pp.~2216--2234.

\bibitem{chung1986some}
Fan~RK Chung, Ronald~L Graham, Peter Frankl, and James~B Shearer, \emph{Some
  intersection theorems for ordered sets and graphs}, Journal of Combinatorial
  Theory, Series A \textbf{43} (1986), no.~1, 23--37.

\bibitem{cohen2017rarity}
Omri Cohen-Alloro and Ron Peled, \emph{Rarity of extremal edges in random
  surfaces and other theoretical applications of cluster algorithms}, The
  Annals of Applied Probability \textbf{30} (2020), no.~5, 2439--2464.

\bibitem{Dobrushin1968TheDe}
Roland~L. Dobrushin, \emph{The description of a random field by means of
  conditional probabilities and conditions of its regularity}, Theor. Probab.
  Appl. \textbf{13} (1968), 197--224.

\bibitem{dobrushin1985completely}
Roland~L Dobrushin and Senya~B Shlosman, \emph{Completely analytical {G}ibbs
  fields}, Statistical physics and dynamical systems, Springer, 1985,
  pp.~371--403.

\bibitem{dobrushin1987completely}
\bysame, \emph{Completely analytical interactions: constructive description},
  Journal of Statistical Physics \textbf{46} (1987), no.~5-6, 983--1014.

\bibitem{dobrushin1968problem}
Roland~L'vovich Dobrushin, \emph{The problem of uniqueness of a {G}ibbsian
  random field and the problem of phase transitions}, Functional analysis and
  its applications \textbf{2} (1968), no.~4, 302--312.

\bibitem{dobrushin1985constructive}
Roland~Lvovich Dobrushin and Senya~B Shlosman, \emph{Constructive criterion for
  the uniqueness of {G}ibbs field}, Statistical physics and dynamical systems,
  Springer, 1985, pp.~347--370.

\bibitem{duminil2016discontinuity}
Hugo Duminil-Copin, Maxime Gagnebin, Matan Harel, Ioan Manolescu, and Vincent
  Tassion, \emph{Discontinuity of the phase transition for the planar
  random-cluster and {P}otts models with $ q> 4$}, arXiv preprint
  arXiv:1611.09877 (2016).

\bibitem{duminil2019logarithmic}
Hugo Duminil-Copin, Matan Harel, Benoit Laslier, Aran Raoufi, and Gourab Ray,
  \emph{Logarithmic variance for the height function of square-ice}, arXiv
  preprint arXiv:1911.00092 (2019).

\bibitem{duminil2019sharp}
Hugo Duminil-Copin, Aran Raoufi, and Vincent Tassion, \emph{Sharp phase
  transition for the random-cluster and potts models via decision trees},
  Annals of Mathematics \textbf{189} (2019), no.~1, 75--99.

\bibitem{dyer2004mixing}
Martin Dyer, Alistair Sinclair, Eric Vigoda, and Dror Weitz, \emph{Mixing in
  time and space for lattice spin systems: a combinatorial view}, Random
  Structures \& Algorithms \textbf{24} (2004), no.~4, 461--479.

\bibitem{engbers2012h2}
John Engbers and David Galvin, \emph{H-coloring tori}, Journal of Combinatorial
  Theory, Series B \textbf{102} (2012), no.~5, 1110--1133.

\bibitem{erschler2009random}
Anna Erschler, \emph{Random mappings of scaled graphs}, Probability theory and
  related fields \textbf{144} (2009), no.~3-4, 543--579.

\bibitem{feldheim2013rigidity}
Ohad~N. Feldheim and Ron Peled, \emph{Rigidity of 3-colorings of the discrete
  torus}, Annales de l'Institut Henri Poincar{\'e}, Probabilit{\'e}s et
  Statistiques, vol.~54, Institut Henri Poincar{\'e}, 2018, pp.~952--994.

\bibitem{feldheim2018growth}
Ohad~N. Feldheim and Yinon Spinka, \emph{The growth constant of odd cutsets in
  high dimensions}, Combinatorics, Probability and Computing \textbf{27}
  (2018), no.~2, 208--227.

\bibitem{feldheim2015long}
\bysame, \emph{Long-range order in the 3-state antiferromagnetic {P}otts model
  in high dimensions}, Journal of the European Mathematical Society \textbf{21}
  (2019), no.~5, 1509--1570.

\bibitem{fernandez2013random}
Roberto Fern{\'a}ndez, J{\"u}rg Fr{\"o}hlich, and Alan~D Sokal, \emph{Random
  walks, critical phenomena, and triviality in quantum field theory}, Springer
  Science \& Business Media, 2013.

\bibitem{ferreira1999antiferromagnetic}
Sabino~Jose Ferreira and Alan~D Sokal, \emph{Antiferromagnetic {P}otts models
  on the square lattice: {A} high-precision {M}onte {C}arlo study}, Journal of
  statistical physics \textbf{96} (1999), no.~3-4, 461--530.

\bibitem{fishburn1988match}
Peter~C Fishburn, Peter~G Doyle, and Larry~A Shepp, \emph{The match set of a
  random permutation has the {FKG} property}, The Annals of Probability
  \textbf{16} (1988), no.~3, 1194--1214.

\bibitem{fortuin1971correlation}
Cees~M Fortuin, Pieter~W Kasteleyn, and Jean Ginibre, \emph{Correlation
  inequalities on some partially ordered sets}, Communications in Mathematical
  Physics \textbf{22} (1971), no.~2, 89--103.

\bibitem{friedli2017statistical}
Sacha Friedli and Yvan Velenik, \emph{Statistical mechanics of lattice systems:
  a concrete mathematical introduction}, Cambridge University Press, 2017.

\bibitem{Galvin2003hammingcube}
David Galvin, \emph{On homomorphisms from the {H}amming cube to {${\bf Z}$}},
  Israel J. Math. \textbf{138} (2003), 189--213.

\bibitem{galvin2007sampling}
\bysame, \emph{Sampling 3-colourings of regular bipartite graphs}, Electron. J.
  Probab \textbf{12} (2007), 481--497.

\bibitem{galvin2008sampling}
\bysame, \emph{Sampling independent sets in the discrete torus}, Random
  Structures \& Algorithms \textbf{33} (2008), no.~3, 356--376.

\bibitem{galvin2004phase}
David Galvin and Jeff Kahn, \emph{On phase transition in the hard-core model on
  {${\bf Z}^d$}}, Combinatorics, Probability and Computing \textbf{13} (2004),
  no.~02, 137--164.

\bibitem{galvin2012phase}
David Galvin, Jeff Kahn, Dana Randall, and Gregory Sorkin, \emph{Phase
  coexistence and torpid mixing in the 3-coloring model on {${\bf Z}^d$}}, SIAM
  Journal on Discrete Mathematics \textbf{29} (2015), no.~3, 1223--1244.

\bibitem{galvin2007torpid}
David Galvin and Dana Randall, \emph{Torpid mixing of local {M}arkov chains on
  3-colorings of the discrete torus}, Proceedings of the eighteenth annual
  ACM-SIAM symposium on Discrete algorithms, Society for Industrial and Applied
  Mathematics, 2007, pp.~376--384.

\bibitem{galvin2004weighted}
David Galvin and Prasad Tetali, \emph{On weighted graph homomorphisms}, DIMACS
  Series in Discrete Mathematics and Theoretical Computer Science \textbf{63}
  (2004), 97--104.

\bibitem{galvin2004slow}
\bysame, \emph{Slow mixing of {G}lauber dynamics for the hard-core model on the
  hypercube}, Proceedings of the 15th annual ACM-SIAM symposium on Discrete
  algorithms, Society for Industrial and Applied Mathematics, 2004,
  pp.~466--467.

\bibitem{georgii2011gibbs}
Hans-Otto Georgii, \emph{Gibbs measures and phase transitions}, vol.~9, Walter
  de Gruyter, 2011.

\bibitem{goldberg2006improved}
Leslie~Ann Goldberg, Markus Jalsenius, Russell Martin, and Mike Paterson,
  \emph{Improved mixing bounds for the anti-ferromagnetic {P}otts model on
  $\mathbb{Z}^2$}, LMS Journal of Computation and Mathematics \textbf{9}
  (2006), 1--20.

\bibitem{goldberg2004strong}
Leslie~Ann Goldberg, Russell Martin, and Mike Paterson, \emph{Strong spatial
  mixing for lattice graphs with fewer colours}, 45th Annual IEEE Symposium on
  Foundations of Computer Science, IEEE, 2004, pp.~562--571.

\bibitem{goldberg2005strong}
\bysame, \emph{Strong spatial mixing with fewer colors for lattice graphs},
  SIAM Journal on Computing \textbf{35} (2005), no.~2, 486--517.

\bibitem{Haggstrom2006uniqueness}
Olle H\"aggstr\"om and Johan Jonasson, \emph{Uniqueness and non-uniqueness in
  percolation theory}, Probab. Surv. \textbf{3} (2006), 289--344. \MR{2280297}

\bibitem{huang2013two}
Yuan Huang, Kun Chen, Youjin Deng, Jesper~Lykke Jacobsen, Roman Koteck{\'y},
  Jes{\'u}s Salas, Alan~D Sokal, and Jan~M Swart, \emph{Two-dimensional {P}otts
  antiferromagnets with a phase transition at arbitrarily large $q$}, Physical
  Review E \textbf{87} (2013), no.~1, 012136.

\bibitem{jalsenius2009strong}
Markus Jalsenius, \emph{Strong spatial mixing and rapid mixing with five
  colours for the {K}agome lattice}, LMS Journal of Computation and Mathematics
  \textbf{12} (2009), 195--227.

\bibitem{jerrum1995very}
Mark Jerrum, \emph{A very simple algorithm for estimating the number of
  k-colorings of a low-degree graph}, Random Structures \& Algorithms
  \textbf{7} (1995), no.~2, 157--165.

\bibitem{jonasson2002uniqueness}
Johan Jonasson, \emph{Uniqueness of uniform random colorings of regular trees},
  Statistics \& Probability Letters \textbf{57} (2002), no.~3, 243--248.

\bibitem{kahn2001entropy}
Jeff Kahn, \emph{An entropy approach to the hard-core model on bipartite
  graphs}, Combinatorics, Probability and Computing \textbf{10} (2001), no.~03,
  219--237.

\bibitem{Kahn2001hypercube}
\bysame, \emph{Range of cube-indexed random walk}, Israel J. Math. \textbf{124}
  (2001), 189--201.

\bibitem{kahn1999generalized}
Jeff Kahn and Alexander Lawrenz, \emph{Generalized rank functions and an
  entropy argument}, Journal of Combinatorial Theory, Series A \textbf{87}
  (1999), no.~2, 398--403.

\bibitem{OberwolfachCombinatoricsWorkshop2017}
Jeff Kahn, Angelika Steger, and Benny Sudakov, \emph{Oberwolfach
  {C}ombinatorics workshop report}, January 2017,
  \href{https://www.mfo.de/document/1701/OWR_2017_01.pdf}{www.mfo.de/document/1701/OWR\_2017\_01.pdf}.

\bibitem{kahn2007positive}
Jeff Kahn and Nicholas Weininger, \emph{Positive association in the fractional
  fuzzy {P}otts model}, The Annals of Probability \textbf{35} (2007), no.~6,
  2038--2043.

\bibitem{kotecky1985long}
Roman Koteck{\'y}, \emph{Long-range order for antiferromagnetic {P}otts
  models}, Physical Review B \textbf{31} (1985), no.~5, 3088.

\bibitem{kotecky2009}
\bysame, \emph{Private communication},  (2009).

\bibitem{kotecky2014entropy}
Roman Koteck{\'y}, Alan~D. Sokal, and Jan~M. Swart, \emph{Entropy-driven phase
  transition in low-temperature antiferromagnetic {P}otts models}, Comm. in
  Math. Phys. \textbf{330} (2014), no.~3, 1339--1394.

\bibitem{lebowitz1998improved}
J.~L. Lebowitz and A.~E. Mazel, \emph{Improved {P}eierls argument for
  high-dimensional {I}sing models}, J. Statist. Phys. \textbf{90} (1998),
  no.~3-4, 1051--1059. \MR{1616958}

\bibitem{loebl2003note}
Martin Loebl, Jaroslav Ne{\v{s}}et{\v{r}}il, and Bruce Reed, \emph{A note on
  random homomorphism from arbitrary graphs to $\mathbb{Z}$}, Discrete
  mathematics \textbf{273} (2003), no.~1-3, 173--181.

\bibitem{lovasz1975ratio}
L{\'a}szl{\'o} Lov{\'a}sz, \emph{On the ratio of optimal integral and
  fractional covers}, Discrete mathematics \textbf{13} (1975), no.~4, 383--390.

\bibitem{lubensky1979statistics}
TC~Lubensky and Joel Isaacson, \emph{Statistics of lattice animals and dilute
  branched polymers}, Physical Review A \textbf{20} (1979), no.~5, 2130.

\bibitem{lubin1993comment}
Mona Lubin and Alan~D Sokal, \emph{Comment on ‘‘{A}ntiferromagnetic {P}otts
  models’’}, Physical review letters \textbf{71} (1993), no.~11, 1778.

\bibitem{martinelli1999lectures}
Fabio Martinelli, \emph{Lectures on {G}lauber dynamics for discrete spin
  models}, Lectures on probability theory and statistics, Springer, 1999,
  pp.~93--191.

\bibitem{martinelli1994approach}
Fabio Martinelli and Enzo Olivieri, \emph{Approach to equilibrium of {G}lauber
  dynamics in the one phase region. {I}. {T}he attractive case},  \textbf{161}
  (1994), no.~3, 447--486.

\bibitem{martinelli1994approach2}
\bysame, \emph{Approach to equilibrium of {G}lauber dynamics in the one phase
  region. {II}. {T}he general case},  \textbf{161} (1994), no.~3, 487--514.

\bibitem{mceliece2002theory}
Robert McEliece, \emph{The theory of information and coding}, vol.~3, Cambridge
  University Press, 2002.

\bibitem{misiurewicz1976short}
Michael Misiurewicz, \emph{A short proof of the variational principle for a
  {${\bf Z}_+^N$}-action on a compact space}, Ast{\'e}risque \textbf{40}
  (1975), 147--157.

\bibitem{peierls1936ising}
Rudolf Peierls, \emph{On {I}sing's model of ferromagnetism}, Mathematical
  Proceedings of the Cambridge Philosophical Society, vol.~32, Cambridge
  University Press, 1936, pp.~477--481.

\bibitem{peled2010high}
Ron Peled, \emph{High-dimensional {L}ipschitz functions are typically flat},
  The Annals of Probability \textbf{45} (2017), no.~3, 1351--1447.

\bibitem{peled2014odd}
Ron Peled and Wojciech Samotij, \emph{Odd cutsets and the hard-core model on
  {${\bf Z}^d$}}, Annales de l'Institut Henri Poincar{\'e}, Probabilit{\'e}s et
  Statistiques, vol.~50, Institut Henri Poincar{\'e}, 2014, pp.~975--998.

\bibitem{peled2013grounded}
Ron Peled, Wojciech Samotij, and Amir Yehudayoff, \emph{Grounded {L}ipschitz
  functions on trees are typically flat}, Electronic Communications in
  Probability \textbf{18} (2013), 1--9.

\bibitem{peled2012expanders}
\bysame, \emph{Lipschitz functions on expanders are typically flat},
  Combinatorics, Probability and Computing \textbf{22} (2013), 566--591.

\bibitem{peledspinka2018spin}
Ron Peled and Yinon Spinka, \emph{Long-range order in discrete spin systems},
  in preparation.

\bibitem{peled2014random}
\bysame, \emph{Random walk with long-range constraints}, Electronic Journal of
  Probability \textbf{19} (2014).

\bibitem{peled2017condition}
\bysame, \emph{A condition for long-range order in discrete spin systems with
  application to the antiferromagnetic {P}otts model}, arXiv preprint
  arXiv:1712.03699 (2017).

\bibitem{peled2018rigidity}
\bysame, \emph{Rigidity of proper colorings of {${\bf Z}^d$}}, arXiv preprint
  arXiv:1808.03597 (2018).

\bibitem{peled2019lectures}
\bysame, \emph{Lectures on the spin and loop ${O}(n)$ models}, Sojourns in
  Probability Theory and Statistical Physics-I, Springer, 2019, pp.~246--320.

\bibitem{propp1996exact}
James~Gary Propp and David~Bruce Wilson, \emph{Exact sampling with coupled
  {M}arkov chains and applications to statistical mechanics}, Random Structures
  \& Algorithms \textbf{9} (1996), no.~1-2, 223--252.

\bibitem{qin2014partial}
MP~Qin, QN~Chen, ZY~Xie, J~Chen, JF~Yu, HH~Zhao, B~Normand, and T~Xiang,
  \emph{Partial long-range order in antiferromagnetic {P}otts models}, Physical
  Review B \textbf{90} (2014), no.~14, 144424.

\bibitem{rahman1998intermediate}
Shafiqur Rahman, Eric Rush, and Robert~H Swendsen,
  \emph{Intermediate-temperature ordering in a three-state antiferromagnetic
  {P}otts model}, Physical Review B \textbf{58} (1998), no.~14, 9125.

\bibitem{rayspinka2020}
Gourab Ray and Yinon Spinka, \emph{Proper 3-colorings of $\mathbb{Z}^2$ are
  {B}ernoulli}, arXiv preprint arXiv:2004.00028 (2020).

\bibitem{ray2019short}
\bysame, \emph{A short proof of the discontinuity of phase transition in the
  planar random-cluster model with $ q> 4$}, Communications in Mathematical
  Physics \textbf{378} (2020), no.~3, 1977--1988.

\bibitem{salas1997absence}
Jes{\'u}s Salas and Alan~D Sokal, \emph{Absence of phase transition for
  antiferromagnetic {P}otts models via the {D}obrushin uniqueness theorem},
  Journal of Statistical Physics \textbf{86} (1997), no.~3-4, 551--579.

\bibitem{sapozhenko1989number}
Aleksandr~Antonovich Sapozhenko, \emph{The number of antichains in ranked
  partially ordered sets}, Diskretnaya Matematika \textbf{1} (1989), no.~1,
  74--93.

\bibitem{sapozhenko1987onthen}
Alexander.~A. Sapozhenko, \emph{On the number of connected subsets with given
  cardinality of the boundary in bipartite graphs}, Metody Diskretnogo Analiza
  (Russian) \textbf{45} (1987).

\bibitem{sapozhenko1991number}
Alexander~A. Sapozhenko, \emph{On the number of antichains in multilevelled
  ranked posets}, Discrete Math. Appl. \textbf{1} (1991), no.~2, 149--170.

\bibitem{Sheffield2005random}
Scott Sheffield, \emph{Random surfaces}, Ast\'erisque (2005), no.~304, vi+175.

\bibitem{slade1999lattice}
Gordon Slade, \emph{Lattice trees, percolation and super-{B}rownian motion},
  Perplexing Problems in Probability: Festschrift in Honor of Harry Kesten,
  Basel (1999).

\bibitem{timar2013boundary}
{\'A}d{\'a}m Tim{\'a}r, \emph{Boundary-connectivity via graph theory},
  Proceedings of the American Mathematical Society \textbf{141} (2013), no.~2,
  475--480.

\bibitem{vigoda2000improved}
Eric Vigoda, \emph{Improved bounds for sampling colorings}, Journal of
  Mathematical Physics \textbf{41} (2000), no.~3, 1555--1569.

\bibitem{weitz2005combinatorial}
Dror Weitz, \emph{Combinatorial criteria for uniqueness of {G}ibbs measures},
  Random Structures \& Algorithms \textbf{27} (2005), no.~4, 445--475.

\bibitem{wu2016average}
Yaokun Wu, Zeying Xu, and Yinfeng Zhu, \emph{Average range of {L}ipschitz
  functions on trees}, Moscow Journal of Combinatorics and Number Theory
  \textbf{1} (2016), no.~6, 96--116.

\end{thebibliography}

\end{document}